\newtheorem{theorem}{Theorem}[section]
\newtheorem{lemma}[theorem]{Lemma}
\theoremstyle{definition}
\newtheorem{definition}[theorem]{Definition}
\newtheorem{corollary}[theorem]{Corollary}
\newtheorem{proposition}[theorem]{Proposition}
\theoremstyle{remark}
\newtheorem{remark}[theorem]{Remark}
\numberwithin{equation}{section}
\DeclareMathOperator\R{\mathbb{R}}
\DeclareMathOperator\N{\mathbb{N}}
\DeclareMathOperator\argmin{argmin}
\newcommand*\prob{\mathop{}\!\mathscr{P}}
\newcommand{\p}{\varrho}
\newcommand*\diff{\mathop{}\!\mathrm{d}}
\title{Doubly nonlinear diffusive PDEs: new existence results via generalized Wasserstein gradient flows}
\author{Thibault Caillet\thanks{Universite Claude Bernard Lyon 1, ICJ UMR5208, CNRS, Ecole Centrale de Lyon, INSA Lyon, Universit\'e Jean Monnet,
69622 Villeurbanne, France. ({caillet@math.univ-lyon1.fr})}
\and Filippo Santambrogio\thanks{Universite Claude Bernard Lyon 1, ICJ UMR5208, CNRS, Ecole Centrale de Lyon, INSA Lyon, Universit\'e Jean Monnet,
69622 Villeurbanne, France. ({santambrogio@math.univ-lyon1.fr)}}
}
\begin{document}

\maketitle

\begin{abstract} We prove an existence result for a large class of PDEs with a nonlinear Wasserstein gradient flow structure. We use the classical theory of Wasserstein gradient flow to derive an EDI formulation of our PDE and prove that under some integrability assumptions on the initial condition the PDE is satisfied in the sense of distributions. \end{abstract}
\textbf{Keywords} : Gradient Flows, Doubly nonlinear PDEs, Optimal Transport\\
\textbf{MSC codes} : 49Q22, 35A01, 35B65, 35K65

\section{Introduction}

This manuscript is concerned with parabolic non-linear equations of the form
\begin{equation*}\partial_t\rho=\Delta_q(h(\rho)),\end{equation*}
where $h:\R_+\to\R$ is an increasing function and $q>1$ a fixed exponent, by viewing it as a generalized gradient flow in the Wasserstein space $\mathbb W_p$ (where $p$ and $q$ are conjugate exponents). In order to introduce this approach, we will start our presentation from the easiest case of classical gradient flows in Hilbert spaces and slowly arrive to the case we aim to focus at.

The most classical theory of gradient flows is concerned with evolution equations of the form 
\begin{equation*}
x'(t)=-\nabla \mathscr{F}(x(t)),
\end{equation*}
for instance when $x$ is a curve valued in a certain Hilbert space $H$ and $\mathscr{F}:H\to \R$ is a given function on such a space. It is well-known that a natural time-discretization of this equation takes the form of a sequence of optimization problems, where we define, for a given time step $\tau>0$, a sequence $(x_k)_k$ of points in $H$ via 
\begin{equation*}
x_{k+1}=\argmin\left\{\mathscr{F}(x)+\frac{||x-x_k||^2}{2\tau}\right\}.
\end{equation*}
This corresponds to the well-known implicit Euler scheme for the above equation, but has the advantage that it admits a formulation in metric spaces, where the norm $||x-x_k||$ is replaced by a generic distance. Such an iterated minimization procedure is usually called {\it minimizing movement scheme} (see \cite{DeG}). 

If one replaces the power $2$ in these optimization problems with another power, the natural scaling is the following
\begin{equation*}
x_{k+1}=\argmin\left\{\mathscr{F}(x)+\frac\tau p\left(\frac{||x-x_k||}{\tau}\right)^p\right\}
\end{equation*}
and leads to a solution of 
\begin{equation*}
x'(t)=-(\nabla \mathscr{F}(x(t)))^{q-1},
\end{equation*}
where for a vector $v$ and an exponent $\alpha>0$ we denote by $v^\alpha$ the vector whose direction is the same as that of $v$ and whose norm is $||v||^\alpha$, i.e. $||v||^{\alpha-1}v$, and the exponent $q$ is the conjugate exponent of $p$. As a consequence, we do not face a true gradient flow, but a different variational evolution that we call {\it generalized gradient flow}. A similar construction could be made in metric spaces, of course, where the notion of (generalized) gradient flow should be suitably defined (and we refer to \cite{AGS} or \cite{San-surv} for the details).

A particular case of interest is that of the Wasserstein spaces of probability measures endowed with the distances $W_p$ induced by optimal transport. These distances are now used in a variety of contexts, from PDEs to machine learning, and we will not provide here extra details on them. The main facts that we will use will be presented in Section 2.2, and the interested reader is encouraged to refer to \cite{VillaniOT} or \cite{OTAM}. Each distance $W_p$ being defined as the $p$-root of a transport cost, it is natural to use each $W_p$ only in the framework of a generalized gradient flow with exponent $p$.

The procedure consisting in studying the limits (as $\tau\to 0$) of the following iterated optimization problems
\begin{equation*}\rho_{k+1}=\argmin\left\{\mathscr{F}(\rho)+\frac{W_2^2(\rho,\rho_k)}{2\tau}\right\}\end{equation*}
has been introduced in \cite{JKO}, is now known as 
{\it Jordan-Kinderleher-Otto} scheme, and is widely used to study PDEs of the form
\begin{equation*}\partial_t\rho=\nabla\cdot\left(\rho\nabla \left(\frac{\delta \mathscr{F}}{\delta\rho}\right)\right),\end{equation*}
where $\frac{\delta \mathscr{F}}{\delta\rho}$ denotes the first variation of the functional $\mathscr{F}$. When $\mathscr{F}(\rho)=\int f(\rho(x))dx$ this gives rise to a parabolic PDE of the form
\begin{equation*}\partial_t\rho=\nabla\cdot\left(\rho\nabla (f'(\rho))\right).\end{equation*}

In this work, we investigate instead the general case $p\neq 2$. This consists in considering the iterated optimization problems
\begin{equation}
\label{jkoscheme}
\rho_{k+1}=\argmin\left\{\mathscr{F}(\rho)+\frac{W_p^p(\rho,\rho_k)}{p\tau^{p-1}}\right\},
\end{equation}
that we will call $p$-JKO scheme. The PDE has now  the form
\begin{equation*}\partial_t\rho=\nabla\cdot\left(\rho\left(\nabla \left(\frac{\delta F}{\delta\rho}\right)\right)^{q-1}\right).\end{equation*}
When $\mathscr{F}$ has the form $\int f(\rho(x))dx$ for a convex integrand $f$, this equation becomes the desired one
\begin{equation*}\partial_t\rho=\Delta_q(h(\rho)),\end{equation*}
where $h$ is such that $h'(s)=s^{p-1}f''(s)$ and, again, $p$ and $q$ are conjugate exponents. Equivalently, one can also write this as 
\begin{equation*}\partial_t b(u)=\Delta_q(u)\end{equation*}
with $b=h^{-1}$. This is why these equations are called {\it doubly non-linear}. They are anyway quasilinear, since the right-hand side is linear w.r.t. the second derivatives in space. However, the nonlinearity in 
terms of the gradient is source of difficulty. 

Among the first studies on these equations we mention \cite{AltLuc} where strong structural assumptions were required to prove existence and uniqueness, but we are more interested in the works which already attacked these equations in terms of generalized gradient flows and $p$-JKO schemes. the founding papers on this variational approach to these equations are \cite{Agueh} and \cite{Otto}, where it is proven that the discrete solutions obtained by the $p$-JKO scheme converge, under some conditions, to a solution of the PDE, in weak sense. 

Both \cite{Agueh} and \cite{Otto} use estimates obtained from the scheme in order to prove the necessary compactness of both the density $\rho$ and its gradient $\nabla \rho$ and pass to the limit the discrete optimality conditions at each step. They require some assumptions on the function $f$: \cite{Otto} only considers the case of powers, i.e. $f(z)=\frac{z^m}{m-1}$, and requires $m>0$; \cite{Agueh} requires $f$ to satisfy McCann's condition for geodesic convexity (see \cite{McC}; this condition is detailed in our assumption {\bf (H5)} in \cref{section2.1}). These restrictions on $f$ are disappointing since the case of the parabolic $q$-Laplacian equation
$\partial_t\rho=\Delta_q\rho$ requires to use a function $f$ such that $f''(z)=cz^{-p}$, i.e. $f(z)\approx z^{2-p}$, which could be out of the set of assumptions of these papers. 

Our goal in the present paper is to generalize the results to more generic convex functions $f$, and at the same time to change the strategy of the proof. Instead of relying on fine arguments for the strong convergence of the gradient, we sill make use of the {\it Energy-Dissipation Principle} and prove an {\it Energy Dissipation Inequality on the limit}. The idea, well-known in gradient flows (see, for instance \cite{AGS}) is the following: a curve $t\mapsto x(t)$ is a solution of $x'=-\nabla \mathscr{F}(x)$ on $[0,T]$ if and only if we have
\begin{equation}\label{EDI1}
    \mathscr{F}(x(T))+\int_0^T (\frac 12 ||x'(t)||^2+\frac 12||\nabla \mathscr{F}(x(t))||^2)\diff t\leq \mathscr{F}(x(0)).
\end{equation}
This can be seen from the fact that the chain rule provides $\frac{d}{dt}(\mathscr{F}(x(t))=\nabla \mathscr{F}(x(t))\cdot x'(t)$ and, by Young's inequality, the opposite inequality of \eqref{EDI1} is always true and the only equality case is given by $x'=-\nabla F(x)$.

Analogously, we have $x'(t)=-(\nabla \mathscr{F}(x(t)))^{q-1}$ if and only if we have 
 $\mathscr{F}(x(T))+\int_0^T (\frac 1p ||x'(t)||^p+\frac 1q||\nabla \mathscr{F}(x(t))||^q)\diff t\leq \mathscr{F}(x(0))$. The corresponding condition in the Wasserstein space when considering $\mathscr{F}(\rho):=\int f(\rho(x))dx$ would be: a curve $t\mapsto\rho_t$, solution of $\partial_t\rho+\nabla\cdot(\rho v)=0$, is a solution of the PDE if and only if we have
\begin{equation*}
\mathscr{F}(\rho(T))+\frac1p\int_0^T \int_\Omega\rho|v|^p\diff x \diff t+\frac 1q \int_0^T\int_\Omega \rho|\nabla (f'(\rho))|^q \diff x \diff t\le\mathcal \mathscr{F}(\rho(0)).
\end{equation*}
Proving such an inequality is possible, up to some technicalities, using suitable estimates on the the discrete scheme and semicontinuity arguments. The important point is that it is in general hard to prove that this characterizes the solutions of the PDE, i.e. the condition $v=-\nabla f'(\rho)$, since it is hard to prove that we do have $\frac{\diff}{\diff t}\mathscr{F}(\rho_t)=\int\nabla(f'(\rho_t))\cdot v_td\rho_t$, i.e. proving a chain rule in the Wasserstein space.

The idea of using a $p$-$q$ version of the EDI condition has been discussed in \cite{AGS}, at the metric level as well as in the setting of curves in Wasserstein space, and applied mostly to the case of geodesically convex functionals where a chain rule is proved. 
There are many other papers where the EDI condition is used to characterize solutions of these doubly nonlinear PDEs, for example we can cite \cite{CancesMatthesNabetRott}, whose goal is to prove the convergence of a numerical scheme. Also one can cite \cite{RMSDoublyNonLinear} where more general convex duality pairs are allowed, instead of simply $p$-$q$ powers.

 One of the hard tasks of the present work is to remove this assumption of geodesic convexity on the driving functional while still being able to obtain a suitable chain rule, in the case of bounded solutions and for $p=2$, this has also been discussed in \cite{ALSStabilty}.  Here, this is done by decomposing, after some approximation, an arbitrary function $f$ into the difference of two functions satisfying McCann's condition. If we do not need to perform such a decomposition, i.e. if $f$ itself satisfies McCann's condition, then the only assumption that we will require on the initial datum $\rho_0$ will be $\int f(\rho_0)<+\infty$, which is natural. If not, then we need a certain $L^\alpha$ summability, for an exponent $\alpha$ depending on the dimension $d$, on the exponent $p$, and on the function $f$. 

This condition on the integrability of the initial datum is an improvement compared to \cite{Agueh}, where most of the paper is presented under the assumption that $\rho_0$ is bounded from above and below by strictly positive constants, and then completed by a discussion on how to remove the lower bounds by approximation and then the upper bound by replacing it with $\rho_0\in L^p$ (the same $p$ as the the exponent in the Wasserstein space). Our exponent $\alpha$ could be smaller than $p$ but, in particular, we do not need it when $f$ satisfies McCann's condition, as it is the case in \cite{Agueh}.

We stress on the other hand two points where our paper is less general than the previous ones, essentially for the sake of simplicity:
\begin{itemize}
    \item After performing very long computations we decided to stick to the case $\mathscr{F}(\rho)=\int f(\rho(x))dx$ and ignore the more general case $\mathscr{F}(\rho)=\int f(\rho(x))dx+\int Vd\rho$ where a potential energy is added. Such a case is, instead, considered in \cite{Agueh,CancesMatthesNabetRott}. Yet, it makes the computations much harder in the case where $\mathscr{F}$ is not geodesically convex, and we believe the interest for these equations is limited, because, differently from the case $p=2$, the gradient of the potential does not act as a linear drift in the equation.
    \item We also decided to ignore the case where the Wasserstein distance $W_p$ are replaced by transport costs which have not the homogeneity of a power. Indeed, \cite{Agueh} considers a JKO-like scheme obtained by looking at
    \begin{equation*}
    \rho_{k+1}=\argmin\left\{\mathscr{F}(\rho)+\tau \mathcal T_{c,\tau}(\rho,\rho_k)\right\},
    \end{equation*}
    where $T_{c,\tau}$ is the minimal transport cost associated with the cost $(x,y)\mapsto c((x-y)/\tau)$. In this case the equation we obtain is
\begin{equation*}
\partial_t\rho=\nabla\cdot\left(\rho\nabla c^*\left(\frac{\delta F}{\delta\rho}\right)\right).
\end{equation*}
For an example of a very interesting PDE with this structure we cite \cite{McCPue} on the so-called {\it relativistic heat equation}, where $c(v)=1-\sqrt{1-|v|^2}$.
 \end{itemize}

 The main result of our paper is the following theorem, where we use the notation $L_f(z)=zf'(z) - f(z)$, so that the function $L_f$ satisfies $L_f'(z)=zf''(z)$ and, formally, $\nabla L_f(\rho)=\rho\nabla (f'(\rho))$.
\begin{theorem}
\label{maintheorem}
    Let $f:\R_+ \to \R \cup \{+\infty\}$ be a l.s.c.\ convex function such that $f\in C^2 (0,+\infty)$, $f''>0$, and $f''(z)\geq Cz^{-\theta}$ when $z$ is large enough for some $\theta\in\R\cup\{+\infty\}$. Let $\p_0 \in L^1 (\Omega)$ be a probability density satisfying
     \begin{align*}
        \int_\Omega f(\p_0) \diff x < \infty.
    \end{align*}
If $f$ does not satisfy McCann's condition, set $\alpha =2- q(1+\frac{1}{d})+ \theta (q-1)$, and assume as well $\p_0\in L^\alpha$ whenever $\alpha>1$, or $\int_\Omega \p_0 \log \p_0 \diff x < + \infty $ if $ \alpha = 1$.

Then there exists $\p \in C^0 ([0,T], \mathbb{W}_p (\Omega))$, obtained as any limit as $\tau \to 0$ of suitably interpolated steps of the $p$-JKO scheme \cref{jkoscheme} started from $\p_0$, such that $L_f(\p) \in W^{1,1} (\Omega)$ for a.e.\ $t$, and there exists $v\in L^p(\p_t \diff x \diff t)$ such that
    \begin{align}
    \label{PDE}
        \begin{cases}
            \partial_t \p_t + \nabla \cdot \left(\p_t v_t\right) =0 \textnormal{ in the sense of distributions,}\\
            v_t=-\left(\frac{\nabla L_f (\p_t)}{\p_t}\right)^{q-1} \; \p_t \textnormal{ a.e.\ for a.e.\ } t, \\
            \p(0)=\p_0. \\
        \end{cases}
    \end{align}
Moreover, it satisfies the Energy Dissipation Inequality
\begin{equation}
\label{edi}
     \mathscr{F}(\p_0) \geq \mathscr{F}(\p_{T}) + \frac{1}{q} \int_{0} ^{T }\int_\Omega \left| \frac{\nabla L_f(\p_t )}{\p_t } \right|^q \diff \p_t \diff t + \frac{1}{p} \int_{0} ^{T } \int_\Omega \left| v _ t \right|^p \diff \p_t \diff t .
\end{equation}
Finally, any curve satisfying \cref{edi} such that 
\begin{align*}
        &\int_{\Omega_T} |\nabla h(\p)|^q \diff x \diff t < +\infty,
    \end{align*}
    where $h$ is any function such that $h'(z)=z^{\frac{\alpha -1}{q}} f''(z)^\frac{1}{p}$ is also a solution of \cref{PDE} and actually satisfies equality in \cref{edi}.
\end{theorem}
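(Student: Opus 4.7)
My plan is to split the proof into two parts: first, an existence result obtained as a limit of the $p$-JKO scheme together with the EDI \cref{edi} and the continuity equation; second, the converse characterization, which relies crucially on a chain rule for $\mathscr{F}$ along $W_p$-absolutely continuous curves and which I expect to be the main obstacle.

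For the existence part, I would run the scheme \cref{jkoscheme} starting from $\p_0$ and extract both a piecewise constant interpolation $\bar\p^\tau$ and a geodesic-type interpolation $\tilde\p^\tau$. Summing the optimality of each step telescopes to
\begin{equation*}
\mathscr{F}(\bar\p^\tau(T)) + \sum_k \frac{W_p^p(\p_{k+1}^\tau, \p_k^\tau)}{p\tau^{p-1}} \leq \mathscr{F}(\p_0),
\end{equation*}
so $\bar\p^\tau$ is uniformly $1/p$-H\"older in $W_p$ and a subsequence converges in $C^0([0,T]; \mathbb{W}_p(\Omega))$ by Arzel\`a-Ascoli. The Euler-Lagrange equation of each step reads $v_k = -(\nabla L_f(\p_{k+1}^\tau)/\p_{k+1}^\tau)^{q-1}$ with $v_k = (\mathrm{id} - T_k)/\tau$, where $T_k$ is the optimal transport, and the algebraic relation $(q-1)p = q$ yields the identity
\begin{equation*}
\int \p_{k+1}^\tau |v_k|^p \diff x = \int \p_{k+1}^\tau \left|\frac{\nabla L_f(\p_{k+1}^\tau)}{\p_{k+1}^\tau}\right|^q \diff x.
\end{equation*}
This, together with the telescoping of energies, is a discrete version of \cref{edi}; lower semicontinuity of the kinetic term (via Benamou--Brenier) and of the slope functional $\p \mapsto \int |\nabla L_f(\p)/\p|^q \diff \p$ along $W_p$-converging sequences then passes it to the limit, simultaneously identifying a limit velocity $v$ satisfying the continuity equation in \cref{PDE} and producing \cref{edi}.

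The converse is the harder direction. Combining the continuity equation with a chain rule would give
\begin{equation*}
\mathscr{F}(\p_0) - \mathscr{F}(\p_T) = -\int_0^T \int_\Omega \frac{\nabla L_f(\p_t)}{\p_t} \cdot v_t \diff \p_t \diff t,
\end{equation*}
and pointwise Young's inequality then yields the reverse of \cref{edi}; combined with \cref{edi} itself this forces equality almost everywhere, hence $v_t = -(\nabla L_f(\p_t)/\p_t)^{q-1}$ and therefore both \cref{PDE} and equality in \cref{edi}. Thus the entire converse reduces to rigorously justifying this chain rule. When $f$ satisfies McCann's condition it is classical via the geodesic convexity of $\mathscr{F}$. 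In general, following the strategy announced in the introduction and in the spirit of \cite{ALSStabilty}, I would approximate $f$ by smooth convex integrands, decompose each approximant as $f_1 - f_2$ with both $f_i$ McCann-admissible and $f_i''$ controlled by $f''$, write the chain rule separately for each $\int f_i(\p_t)$, and then recombine. The choice $h'(z) = z^{(\alpha-1)/q} f''(z)^{1/p}$ is tuned precisely so that the integrability $\int |\nabla h(\p)|^q \diff x \diff t < \infty$, together with the $L^\alpha$ bound on $\p$ propagated along the scheme from the assumption on $\p_0$, controls the cross terms needed to pass the sum of the individual chain rules through the decomposition and limit.
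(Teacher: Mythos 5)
Your overall architecture is the right one: existence via the $p$-JKO scheme and an EDI passed to the limit, then the converse via a chain rule and Young's inequality. The chain rule strategy you sketch (truncating $f$, decomposing the truncation into a difference of McCann-admissible functions, and using the integrability $\int_{\Omega_T}|\nabla h(\p)|^q\diff x\diff t<\infty$ obtained from the flow interchange to control the ``bad'' part) is essentially the paper's. But your derivation of the discrete EDI contains a genuine gap that must be filled with a different tool.

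You claim that summing the step-by-step optimality bounds, together with the Euler--Lagrange identity
$\int\p_{k+1}^\tau|v_k|^p = \int\p_{k+1}^\tau|\nabla L_f(\p_{k+1}^\tau)/\p_{k+1}^\tau|^q$,
already yields a discrete version of \cref{edi}. It does not. The telescoping gives only
\begin{align*}
\mathscr{F}(\p_0)-\mathscr{F}(\p_N^\tau)\ \geq\ \sum_k\frac{W_p^p(\p_{k+1}^\tau,\p_k^\tau)}{p\tau^{p-1}}
\ =\ \frac{1}{p}\int_0^T\int_\Omega|\tilde v_t^\tau|^p\diff\tilde\p_t^\tau\diff t,
\end{align*}
i.e., the kinetic term alone with coefficient $1/p$. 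Even if you invoke the Euler--Lagrange identity to replace the kinetic integral by the slope integral (they are indeed equal at each step), you obtain one term with coefficient $1/p$, not the required sum $\frac1p\int|v|^p+\frac1q\int|\text{slope}|^q$, which is larger by a full factor of $p$. This missing factor is not cosmetic: the converse direction hinges on matching the chain-rule identity against \cref{edi} with the exact Young exponents $1/p$ and $1/q$, so that equality in Young's inequality forces $v=-(\nabla L_f(\p)/\p)^{q-1}$. With a suboptimal constant the argument collapses. The paper recovers the missing $\frac1q$ slope contribution by introducing the De Giorgi variational interpolation $\hat\p_t^\tau\in\argmin_\p \frac{W_p^p(\p,\p_k^\tau)}{p(s\tau)^{p-1}}+\mathscr{F}(\p)$ for $s=(t-k\tau)/\tau$, differentiating the value function $g(t)=\min_\p A(t,\p)$, and applying a one-sided fundamental theorem of calculus. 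This yields the precursor inequality
\begin{align*}
\mathscr{F}(\p_k^\tau)\geq\mathscr{F}(\p_{k+1}^\tau)+\frac{1}{q}\int_{k\tau}^{(k+1)\tau}\int_\Omega\Big|\frac{\nabla L_f(\hat\p_t^\tau)}{\hat\p_t^\tau}\Big|^q\diff\hat\p_t^\tau\diff t+\frac{W_p^p(\p_{k+1}^\tau,\p_k^\tau)}{p\tau^{p-1}},
\end{align*}
in which the slope term and the kinetic term are evaluated on two \emph{different} interpolations of the same discrete sequence; they only become comparable in the $\tau\to0$ limit, where the lower-semicontinuity arguments you correctly describe take over.

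A second, smaller remark: the lower semicontinuity of the slope functional used to pass the slope term to the limit is w.r.t.\ weak $L^1$ convergence of densities, not merely $W_p$ (narrow) convergence; you need the uniform superlinear integrability furnished by the Dunford--Pettis/flow-interchange argument (the paper's Lemma constructing $\Phi$ and the bound $\sup_t\int_\Omega\Phi(\p_t)<\infty$) before you can apply that semicontinuity along the approximating sequence. Finally, you should mention the $\varepsilon$-entropic regularization $f_\varepsilon=f+\varepsilon z\log z$ of each JKO step: without it the Euler--Lagrange computation and the flow-interchange estimates that your chain rule needs are not rigorously justified, since the unregularized minimizer need not be Lipschitz or bounded away from $0$.
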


 We underline that the summability assumption on $\p_0$ (which is void if $\alpha<1$) only plays a role in case $f$ does not satisfy McCann's condition. Since McCann's condition is satisfied for every convex function when $d=1$, the discussion about the parameter $\alpha$ is only meaningful when $d \geq 2$.

The parameter $\alpha$ is used in our proof to derive some regularity for the limit curve we construct via the $p$-JKO scheme, using an entropy inequality obtained by use of the Flow Interchange Technique (see \cref{SectionEstimates}). To give a few examples of how the integrability assumption $\p_0 \in L^\alpha$ is used in practice, one can notice that if $f$ is only assumed to satisfy $f''>0$, one has to pick $\alpha = + \infty$ so that the initial condition has to be bounded. One might wonder whether there are cases where finiteness of the initial energy implies that $\p_0 \in L^\alpha$ for the corresponding $\alpha$, which would mean that there is no additional integrability assumption even in the non geodesically convex case. Indeed if for large $z$ one has $f''(z) \geq C z^{-\theta}$, then for large $z$ one also has $f(z) \geq Cz^{2-\theta}$; in the case where $1 + \frac{1}{d}\geq \theta$, we have $2- \theta \geq \alpha$ and finiteness of the energy implies $\p_0 \in L^\alpha$. Yet, in this case we have $\alpha \leq 1 - \frac{1}{d}<1$ so that the $\alpha$-integrability condition in \cref{maintheorem} is void anyway.
As stated earlier, the difficult part of our proof lies in proving a suitable chain rule for functionals of absolutely continuous curves in Wasserstein space. We will prove the following, which we state here because it might be of independent interest and have other applications :

\begin{theorem}[Chain Rule]
\label{ChainRule1}
    Let $(\p_t)_{t\in [0,T]} \in \textnormal{AC}^p (\mathbb{W}_p (\Omega))$ with velocity vector field $(v_t)_{t\in [0,T]}$, so that it solves the continuity equation
    \begin{equation*}
        \partial_t \p_t + \nabla \cdot ( \p_t v_t) =0.
    \end{equation*}
    If $\p \in \mathcal{M}([0,T]\times \Omega)$ is absolutely continuous with respect to the Lebesgue measure, $L_f (\p_t) \in W^{1,1}(\Omega)$ for a.e.\ $t \in [0,T]$, and if
    \begin{align*}
        &\int_{\Omega_T} \left| \frac{\nabla L_f(\p_t) }{\p_t} \right|^q \diff \p_t \diff t < \infty,
    \end{align*}
    and, if $f$ does not satisfy McCann's condition,
    \begin{align*}
        &\int_{\Omega_T} |\nabla h(\p)|^q \diff x \diff t < +\infty,
    \end{align*}
    where $h$ is any function satisfying $h'(z)=z^{\frac{\alpha -1}{q}} f''(z)^\frac{1}{p}$, and $\alpha =2- q(1+\frac{1}{d})+ \theta (q-1)$, then
    \begin{align*}
        \mathscr{F} (\p_T) - \mathscr{F} (\p_0) = \int_{\Omega_T} \frac{\nabla L_{f}(\p)}{\p}v \diff \p \diff t.
    \end{align*}
\end{theorem}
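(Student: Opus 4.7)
Formally, testing the continuity equation against $f'(\p_t)$ gives
\begin{equation*}
    \frac{\diff}{\diff t}\mathscr{F}(\p_t)=\int f'(\p_t)\,\partial_t\p_t\diff x=\int \nabla f'(\p_t)\cdot v_t\diff\p_t=\int \frac{\nabla L_f(\p_t)}{\p_t}\cdot v_t\diff\p_t,
\end{equation*}
and integration in time yields the claim. The whole task is to justify this at the given low regularity.

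I would split along the dichotomy of the statement. When $f$ satisfies McCann's condition, $\mathscr F$ is geodesically $p$-convex on $\mathbb W_p(\Omega)$, and the chain rule follows from the standard theory of generalized $p$-gradient flows for geodesically $\lambda$-convex functionals (as developed in \cite{AGS} for $p=2$ and in \cite{Agueh} for general $p$). The assumption $\nabla L_f(\p)/\p\in L^q(\p_t\diff x\diff t)$ plays exactly the role of the integrable bound on the norm of the gradient that this theory requires, and no further summability of $\p$ is needed, consistently with the statement where the hypothesis on $\nabla h(\p)$ is absent in that regime.

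In the general case I would follow the decomposition strategy announced in the introduction. After truncating $f''$ near $0$ and $+\infty$ to obtain smooth convex approximants $f_n$, one writes $f_n=g_n-\ell_n$ with both $g_n''$ and $\ell_n''$ bounded below by $C z^{-\theta}$, so that both $g_n$ and $\ell_n$ satisfy McCann's condition for the exponent $p$. The previous step then applies separately to $\mathscr G_n(\p)=\int g_n(\p)\diff x$ and $\mathscr L_n(\p)=\int \ell_n(\p)\diff x$, and subtracting gives the chain rule for $\mathscr F_n(\p)=\int f_n(\p)\diff x$. It then remains to pass to the limit $n\to\infty$: the left-hand side converges by dominated convergence using the energy bound and convexity, while the right-hand side requires showing
\begin{equation*}
    \int_{\Omega_T}\frac{\nabla L_{f_n}(\p)}{\p}\cdot v\diff\p\diff t\longrightarrow\int_{\Omega_T}\frac{\nabla L_{f}(\p)}{\p}\cdot v\diff\p\diff t.
\end{equation*}

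This last convergence is the main obstacle. The split $g_n-\ell_n$ does not commute with $\nabla L$ in a way that preserves the original hypothesis, and neither $\nabla L_{g_n}(\p)$ nor $\nabla L_{\ell_n}(\p)$ is individually controlled by $\nabla L_f(\p)/\p\in L^q(\p\diff x\diff t)$ alone. This is where the extra hypothesis $\int|\nabla h(\p)|^q\diff x\diff t<\infty$ enters: the precise exponents in $h'(z)=z^{(\alpha-1)/q}f''(z)^{1/p}$ and $\alpha=2-q(1+1/d)+\theta(q-1)$ are tuned so that, via a combination of H\"older's inequality against $v\in L^p(\p\diff x\diff t)$ and Sobolev/interpolation inputs in dimension $d$, the individual correction terms from the decomposition admit uniform $n$-independent bounds, allowing a weak-strong convergence argument to close the estimate and recover the identity for $f$.
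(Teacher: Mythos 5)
Your high-level plan — truncate $f$ near $0$ and $+\infty$, split the truncation as a difference of two functions satisfying McCann's condition, apply a chain rule for geodesically convex functionals (the paper's Lemma~\ref{derivationformula}) to each piece, subtract, and remove the truncation — does match the architecture of Steps~1, 3 and 4 of the proof of Theorem~\ref{ChainRule}. You also correctly see that the $\nabla h(\p)$ hypothesis is there to control the two pieces of the decomposition separately, since neither is controlled by $\nabla L_f(\p)/\p$ alone.

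However, the mechanism by which that control is actually achieved — the paper's Step~2 — is the real content of the theorem, and the route you sketch (Sobolev/interpolation plus weak-strong convergence as $n\to\infty$) is both misplaced and would not deliver what is needed. Two specific points. (i) Requiring ``$g_n''$ and $\ell_n''$ bounded below by $Cz^{-\theta}$'' is not what makes the pieces satisfy McCann's condition, which is the convexity and monotonicity of $s\mapsto s^d f(s^{-d})$; the paper constructs $\tilde f_2$ by taking the negative part of $(\mathcal{M}\tilde f)''$ (with $\mathcal{M}g(s)=s^dg(s^{-d})$) and inverting $\mathcal{M}$, and the feature that is actually used later is the \emph{upper} bound $\tilde f_2''(z)\leq Cz^{-(1+1/d)}$ for large $z$. (ii) The $\nabla h$ bound is consumed at \emph{fixed} truncation level, not in the passage $n\to\infty$: the choice $\alpha=2-q(1+\tfrac1d)+\theta(q-1)$ is calibrated precisely so that the two bounds $\tilde f_2''(z)\lesssim z^{-(1+1/d)}$ and $f''(z)\geq Cz^{-\theta}$ combine into the pointwise inequality $L_{\tilde f_2}'(z)\leq Cz^{(q-1)/q}h'(z)$, and then a finite-differences argument (not Sobolev embedding) yields $|\nabla L_{\tilde f_2}(\p)|\leq C\p^{(q-1)/q}|\nabla h(\p)|$ a.e., hence
\begin{align*}
\int_{\Omega_T}\frac{|\nabla L_{\tilde f_2}(\p)|^q}{\p^{q-1}}\diff x\diff t\leq C\int_{\Omega_T}|\nabla h(\p)|^q\diff x\diff t<\infty,
\end{align*}
which is exactly what Lemma~\ref{derivationformula} requires for $\tilde f_2$. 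By contrast, the $n\to\infty$ limit is elementary once the truncated chain rule is established: $\nabla L_{\tilde f_n}(\p)=\nabla L_f(\p)\mathbbm{1}_{\{\alpha^n\leq\p\leq\beta^n\}}$, so both sides converge by monotone and dominated convergence with no further use of $\nabla h$. The missing step in your sketch is this pointwise control of $\nabla L_{\tilde f_2}(\p)$, which is where the exponent $\alpha$ earns its keep; an interpolation argument in the limit would not substitute for it.
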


 To achieve the goal of proving the above theorem, the paper is organized as follows. Section 2 is devoted to some preliminaries: notation (2.1), bases of optimal transport, including some considerations on the McCann's condition that we mentioned many times so far (2.2), and the details of an approximation procedure that we will use for every step of the $p$-JKO scheme, i.e. replacing $f$ with $f_\varepsilon$ defined via $f_\varepsilon(z)=f(z)+\varepsilon z\log z$. In section 3 we prove that the solutions obtained in the iterated minimization converge, up to subsequences, to a curve $\rho_t$ satisfying the EDI condition. This requires suitable interpolations of the discrete sequence (3.1). The proof of the compactness and of the limit inequality is contained in Section 3.2, and in Section 3.3 we prove additional integrability properties on the limit curve.

 Section 4 is devoted to the hard task of proving the chain rule and is divided into several steps, where we prove the desired results on arbitrary curves satisfying some integrability assumptions. Since the curve obtained in Section 3 satisfies them, at the end of the section we can prove \cref{maintheorem}.

 We also prove additional estimates on the solution constructed as a limit of the $p$-JKO scheme, in particular in terms of its BV norm. This is a consequence of the so-called {\it five gradients inequality} which as been recently generalized to the case of the transport cost $|x-y|^p$ in \cite{FiveGradsIneq}. This is the object of Section 5.

 A short appendix concludes the paper, where a technical proof of a chain rule result is detailed.

\section{Notations and Preliminaries}

\subsection{Notations}
\label{section2.1}
Below is a list of notations we will use in the sequel :
\begin{itemize}[label=-]
    \item $d\in \N^*$ is the space dimension,
    \item $\Omega$ is an open convex bounded subset of $\R^d$,
    \item $T>0$ is a set time horizon and $\Omega_T=\Omega\times [0,T]$,
    \item $p>1$ and $q>1$ are dual integrability exponents : $\frac{1}{p}+\frac{1}{q}=1$,
    \item $W_p$ is the $p$-Wasserstein distance.
    \item If $z\in \R^d$ is any vector and $\alpha >1$, we write $z^\alpha =|z|^{\alpha-1}z$
    \item For a differentiable function $f:\R_+ \to \R$, $L_f$ is the function defined by $L_f(z)=zf'(z) - f(z)$
    \item For a function $f:\R_+ \to \R$, $\mathscr{F} : \prob (\Omega) \to \R \cup \{+\infty\}$ is its associated functional according to \cref{deffunctional}. We use similarily $\mathscr{H},\mathscr{\Tilde{F}},\mathscr{\Tilde{F}}_1,\mathscr{\Tilde{F}}_2, \mathscr{L}$ for the functionals associated with the functions $h,\Tilde{f},\Tilde{f_1},\Tilde{f_2}, \ell$ respectively.
\end{itemize}
For a function $f : \R_+ \to \R$, we define the following assumptions which we will often use in this paper :
\begin{itemize}
    \item (\textbf{H1}) $f$ is convex and lower semi-continuous 
    \item (\textbf{H2}) $f\in C^2((0,+\infty))$ 
    \item (\textbf{H3}) $f'' >0$ 
    \item (\textbf{H4}) $f''(z) \geq C z^{-\theta}$ when $z$ is large enough
    \item (\textbf{H5}) (McCann's condition) $f(0)=0$ and $(0, +\infty) \ni s \mapsto s^d f(s^{-d})$ is convex and non increasing.
\end{itemize}
(\textbf{H1}),(\textbf{H2}) and (\textbf{H3}) will be used throughout the paper

\subsection{Preliminaries on Optimal transport, Gradient Flows and Geodesic Convexity}

We refer to \cite{AGS,OTAM,VillaniO&N} for the general theory of optimal transport and its application to gradient flows, and will compile below a selection of helpful facts we will use in the sequel. 

\begin{theorem}
    \label{everythingtheorem}
    Let $\p, g \in \prob(\Omega)$ be two probabilities on $\Omega$. The following statements are classical :
    \begin{enumerate}
        \item The problem
        \begin{align*}
            W_p ^p (\mu,\nu) := \min \left\{\int_{\Omega\times \Omega} |x-y|^p \diff \gamma \quad ; \quad \gamma \in \Pi(\mu,\nu) \right\},
        \end{align*}
        where $\Pi(\mu,\nu)$ is the set of probabilities on $\Omega\times \Omega$ with first marginal $\mu$ and second marginal $\nu$, admits a solution $\gamma^*$. If $\mu=\p \diff x$, with $\p \in L^1 (\Omega)$, then the solution is unique, and given by $\gamma^*=(\textnormal{id},T)_\# \p$ for some $T: \Omega \to \Omega$ which is called the optimal transport map. 
        \item We have 
        {\small
        \begin{align*}
            \frac{1}{p} W_p ^p (\mu, \nu) = \max \left\{ \int_\Omega \varphi \diff \mu + \int_\Omega \psi \diff \nu \ ; \ \varphi(x) + \psi(y) \leq \frac{1}{p} |x-y|^p  \; \forall x,y\in \Omega \right\}.
        \end{align*}
        }
        The optimal $\varphi$ we call Kantorovich potentials, and they can be taken to be $c$-concave, meaning of the form
        \begin{align*}
            &\varphi(x)=\inf_{y\in \Omega} \frac{1}{p} |x-y|^p - \psi(y),
            &\psi(y)=\varphi^c (y):=\inf_{x\in \Omega} \frac{1}{p} |x-y|^p - \varphi(x),
        \end{align*}
        from which one can prove that $\varphi$ and $\varphi^c$ are Lipschitz. 
        \item The optimal transport map $T$ is given by
        \begin{align*}
            T(x) = x- (\nabla \varphi (x))^{q-1},
        \end{align*}
        where $\varphi$ is any Kantorovich potential. It is well defined almost everywhere since a Lipschitz function is differentiable almost everywhere.
        \item $W_p$ is a distance on $\prob(\Omega)$ which metrizes the weak convergence of measures. Endowed with this metric, we call $\prob(\Omega)$ the Wasserstein Space, which we denote by $\mathbb{W}_p (\Omega)$
        \item $\mathbb{W}_p(\Omega)$ is a geodesic space, and the geodesic between $\mu$ and $\nu$ is given by $\mu_t:= (\pi_t)_\# \gamma$, where $\pi_t (x,y)=(1-t)x + ty$ and $\gamma$ is an optimal transport plan between $\mu$ and $\nu$. If $\gamma$ is given by an optimal transport map $T$, then $\mu_t =\left((1-t)\textnormal{id}+ tT\right)_\# \mu$.
    \end{enumerate}
\end{theorem}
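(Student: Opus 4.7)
The plan is to handle the five items in order, since each is a classical result; I will sketch the standard arguments and identify where the generality $p>1$ (as opposed to $p=2$) changes the calculations.

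\textbf{Existence and duality (items 1 and 2).} I would first observe that $\Pi(\mu,\nu)$ is weakly compact: as $\Omega$ is bounded, the set of probabilities on $\Omega\times\Omega$ with prescribed marginals is tight and weak-$*$ closed, so Prokhorov gives compactness. The functional $\gamma\mapsto\int |x-y|^p\diff\gamma$ is continuous on this compact set since the integrand is bounded and continuous on $\Omega\times\Omega$, yielding existence of a minimizer $\gamma^*$. For the dual representation I would apply Fenchel--Rockafellar to the primal-dual pair; absence of duality gap is standard. The $c$-concave form is enforced by replacing any admissible $(\varphi,\psi)$ by the double $c$-transform $(\varphi^{cc},\varphi^c)$, which can only improve the dual value. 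Lipschitz continuity of $\varphi(x)=\inf_{y\in\Omega}\tfrac{1}{p}|x-y|^p-\psi(y)$ is inherited from the family $\{x\mapsto\tfrac{1}{p}|x-y|^p\}_{y\in\Omega}$, which is uniformly Lipschitz because $\Omega$ is bounded.

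\textbf{Transport map formula and uniqueness (items 1 continued and 3).} By complementary slackness, $\varphi(x)+\varphi^c(y)=\tfrac{1}{p}|x-y|^p$ on $\mathrm{supp}(\gamma^*)$, while $\leq$ holds everywhere. Fix $x_0$ where $\varphi$ is differentiable; this holds $\mathcal{L}^d$-a.e.\ by Rademacher, hence $\mu$-a.e.\ since $\mu\ll\mathcal{L}^d$. For any $y_0$ with $(x_0,y_0)\in\mathrm{supp}(\gamma^*)$, the function $x\mapsto \tfrac{1}{p}|x-y_0|^p-\varphi(x)$ attains its minimum at $x_0$, so $\nabla\varphi(x_0)=|x_0-y_0|^{p-2}(x_0-y_0)$. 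Inverting this relation via $(z^{p-1})^{q-1}=z$ for $z\geq 0$ gives $y_0=x_0-(\nabla\varphi(x_0))^{q-1}$. Since the right-hand side depends only on $x_0$, $\gamma^*$ is concentrated on the graph of $T(x)=x-(\nabla\varphi(x))^{q-1}$, delivering simultaneously the uniqueness of $\gamma^*$ and its representation as $(\mathrm{id},T)_\#\mu$.

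\textbf{Metric and topological structure (item 4).} Symmetry and positivity of $W_p$ are immediate, and the triangle inequality follows from the gluing lemma to paste optimal plans $\gamma_{12}\in\Pi(\mu_1,\mu_2)$ and $\gamma_{23}\in\Pi(\mu_2,\mu_3)$ into an admissible plan from $\mu_1$ to $\mu_3$, combined with the Minkowski inequality in $L^p$ of the pasted measure. For the metrization: $W_p$-convergence forces weak convergence because $W_1\leq\mathrm{diam}(\Omega)^{(p-1)/p}W_p$, so all $1$-Lipschitz test functions are controlled; conversely, weak convergence of probabilities on the compact $\Omega$ combined with the boundedness and continuity of $(x,y)\mapsto|x-y|^p$ and the compactness of $\Pi(\mu,\nu)$ under weak convergence passes to the limit in the primal and yields convergence of $W_p^p$.

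\textbf{Geodesic structure (item 5).} Given an optimal $\gamma$, define $\mu_t:=(\pi_t)_\#\gamma$. For $0\leq s\leq t\leq 1$, the plan $(\pi_s,\pi_t)_\#\gamma\in\Pi(\mu_s,\mu_t)$ is admissible and gives
\begin{equation*}
W_p(\mu_s,\mu_t)\leq\Big(\int|(\pi_s-\pi_t)(x,y)|^p\diff\gamma\Big)^{1/p}=(t-s)W_p(\mu,\nu).
\end{equation*}
Summing these bounds across any partition of $[0,1]$ and comparing with the endpoint value $W_p(\mu,\nu)$ forces equality, so $t\mapsto\mu_t$ is a constant-speed geodesic. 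The representation $\mu_t=((1-t)\mathrm{id}+tT)_\#\mu$ when $\gamma=(\mathrm{id},T)_\#\mu$ is a direct consequence of the pushforward definition. The main obstacle across the whole argument is the non-Hilbertian step of inverting the first-order optimality relation to extract $T$: this requires the strict convexity and $C^1$ regularity of $z\mapsto\tfrac{1}{p}|z|^p$ for $p>1$, which produces the exponent $q-1$ and distinguishes the generic case from the familiar Brenier affine formula at $p=2$.
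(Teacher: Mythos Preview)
Your sketch is correct and follows the standard arguments found in the references the paper cites (Ambrosio--Gigli--Savar\'e, Santambrogio, Villani). Note that the paper itself does not prove this theorem at all: it is stated as a compilation of ``classical'' facts, with the proofs delegated entirely to \cite{AGS,OTAM,VillaniO&N}. So there is no ``paper's own proof'' to compare against; you have simply written out what those references contain, and the Gangbo--McCann style argument you give for item~3 (first-order optimality at differentiability points of $\varphi$, then inverting $z\mapsto |z|^{p-2}z$) is exactly the standard one.
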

We will now define a functional which will be useful in the sequel. We take $p \in (1, +\infty)$, and set $K_p =  \left\{ (a,b) \in \R \times \R ^d; a + \frac{1}{q} | b|^q \leq 0 \right\} $, and for $(t,x) \in \R_+ \times \R^d,$
\begin{align*}
    B_p(t,x) =
    \begin{cases}
    \frac{1}{p} \frac{|x|^p }{t^{p-1}}, & \text{if } t>0, \\
    0,  & \text{if } t=0, x=0, \\
    + \infty  & \text{if } t=0, x \neq 0,  \text{or } t<0. 
    \end{cases}
\end{align*}

\begin{definition}
    If $X$ is a compact measurable space and $(\p, E) \in \mathcal{M}_b(X) \times \mathcal{M}_b(X)^d $, 
\begin{align*}
    \mathcal{B}_p( \p, E) := \sup \left\{ \int a \diff \varrho + \int b \cdot \diff E; (a,b) \in C_b (X,K_p) \right\}.
\end{align*}
\end{definition}
\begin{proposition}
\label{BenamouBrenier}
$\mathcal{B}_p$ is convex and lower semi-continuous on the space $\mathcal{M}_b(X) \times \mathcal{M}_b(X)^d $ for the weak convergence, and moreover we have
\begin{enumerate}
    \item $\mathcal{B}_p(\p, E) \geq 0$,
    \item If $\p$ and $E$ are both absolutely continuous with respect to a positive measure $\lambda$, then $\mathcal{B}_p(\p, E) = \int B_p(\p,E) \diff \lambda$,
    \item $\mathcal{B}_p(\p,E) < \infty $ only if $\p \geq 0 $ and $E \ll \p$,
    \item In that case, we can write $E= v \cdot \p$ and $\mathcal{B}_p(\p,E)= \int \frac{1}{p} |v|^p \diff \varrho$
\end{enumerate}
\end{proposition}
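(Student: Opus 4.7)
The plan is to first record the pointwise Legendre duality
\begin{equation*}
B_p(t,x) = \sup_{(a,b) \in K_p} \bigl(at + b\cdot x\bigr),
\end{equation*}
which is an elementary computation: for $t>0$ the supremum is attained on the boundary $a = -|b|^q/q$, and maximizing $-t|b|^q/q + b\cdot x$ over $b$ yields the optimal $b = (x/t)^{p-1}$ and the value $|x|^p/(pt^{p-1})$; the cases $t = 0$ and $t < 0$ give directly $0$ or $+\infty$ according to the definition. From this duality, the convexity and lower semi-continuity of $\mathcal{B}_p$ for the weak convergence are immediate, since $\mathcal{B}_p$ is a supremum of the weakly continuous linear functionals $(\p, E) \mapsto \int a \diff \p + \int b\cdot \diff E$ indexed by $(a,b) \in C_b(X, K_p)$. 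Item~1 follows by testing with $(0,0) \in K_p$.

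For item~2, the inequality $\mathcal{B}_p(\p, E) \leq \int B_p(\p, E) \diff \lambda$ is immediate from the pointwise bound $a\p + b\cdot E \leq B_p(\p, E)$ valid for all $(a,b)\in K_p$ (here $\p$ and $E$ denote the densities w.r.t.\ $\lambda$). For the reverse inequality, I would invoke the classical integral-representation theorem for convex normal integrands (as in Rockafellar or in Bouchitt\'e--Buttazzo's convex-analytic framework): since $B_p$ is convex lower semi-continuous with Legendre conjugate exactly the indicator of $K_p$, the conjugate of $\int B_p(\cdot, \cdot)\diff \lambda$ on the dual pairing between measures and continuous functions is precisely the supremum defining $\mathcal{B}_p$. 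A hands-on alternative is to pick a measurable selection $(a^*, b^*): X \to K_p$ that nearly saturates the pointwise supremum (using that $K_p$ is closed convex), then apply Lusin's theorem combined with the continuous projection onto $K_p$ to produce continuous admissible competitors, and conclude by dominated convergence.

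For item~3 I plan to use clever test pairs. To force $\p \geq 0$: for any nonnegative $\phi \in C_b(X)$ and any $c \leq 0$, the pair $(c\phi, 0)$ lies in $K_p$, so $c\int \phi \diff \p \leq \mathcal{B}_p(\p, E) < +\infty$; letting $c \to -\infty$ forces $\int \phi \diff \p \geq 0$ for every such $\phi$, i.e.\ $\p \geq 0$. To force $E \ll \p$: for every $\phi \in C_b(X, \R^d)$ and every $\lambda > 0$ the pair $(-\lambda^q |\phi|^q/q, \lambda \phi)$ lies in $K_p$, which yields
\begin{equation*}
\lambda \int \phi \cdot \diff E - \frac{\lambda^q}{q} \int |\phi|^q \diff \p \leq \mathcal{B}_p(\p, E).
\end{equation*}
Optimizing over $\lambda > 0$ gives the H\"older-type bound
\begin{equation*}
\Bigl| \int \phi \cdot \diff E \Bigr| \leq \bigl( p \,\mathcal{B}_p(\p, E) \bigr)^{1/p} \Bigl(\int |\phi|^q \diff \p \Bigr)^{1/q},
\end{equation*}
which extends by density to $\phi \in L^q(\p)$, showing that $E$ defines a continuous linear form on $L^q(\p)$, hence $E = v\cdot \p$ for some $v \in L^p(\p)$.

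Item~4 then follows by applying item~2 with reference measure $\lambda = \p$: the densities are $1$ and $v$, and $B_p(1, v) = |v|^p/p$ delivers the announced formula. The only genuinely delicate step is the lower-bound direction in item~2, where either the abstract integral-representation machinery or a careful Lusin/projection approximation on $K_p$ is unavoidable; every other step is a direct consequence of a well-chosen family of test pairs in the dual formulation.
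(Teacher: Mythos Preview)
The paper does not prove this proposition: it is stated in the preliminaries section (Section~2.2) as a known fact, alongside other standard optimal-transport results for which the reader is referred to \cite{AGS,OTAM,VillaniO&N}. There is therefore no proof in the paper to compare against.

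Your argument is correct and follows the standard route. The pointwise Legendre duality $B_p(t,x)=\sup_{(a,b)\in K_p}(at+b\cdot x)$ is the key identity, and from it convexity, lower semicontinuity, and item~1 are immediate. Your treatment of item~3 via the test pairs $(c\phi,0)$ and $(-\lambda^q|\phi|^q/q,\lambda\phi)$, followed by optimization in $\lambda$ and Riesz representation on $L^q(\p)$, is clean and exactly how one usually proceeds. Item~4 then reduces correctly to item~2 with $\lambda=\p$.

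The only step where you flag a genuine choice is the lower bound in item~2. Both alternatives you propose are valid: the abstract route (normal integrands, Rockafellar/Bouchitt\'e--Buttazzo) is the cleanest, while the hands-on Lusin argument works because $K_p$ is closed and convex and the nearest-point projection onto it is continuous, so a continuous approximant of a measurable near-optimizer can be projected back into $C_b(X,K_p)$ with controlled loss. Either way the argument goes through; this result appears in full detail, for instance, in \cite{OTAM} or \cite{AGS}.
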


\begin{theorem}
\label{ACCurves}
Let $(\mu_t)_{t \in [0,1]}$ be an absolutely continuous curve in $\mathbb{W}_p(\Omega)$ (for $p > 1$ and $\Omega \subset \mathbb{R}^d$ compact). Then for a.e. $t \in [0,1]$, there exists a vector field $v_t \in L^p(\mu_t; \mathbb{R}^d)$ such that 
\begin{itemize}
    \item the continuity equation $\partial_t \mu_t + \nabla \cdot (v_t \mu_t) = 0$ is satisfied in the weak sense,
    \item for a.e. $t$, we have $\|v_t\|_{L^p(\mu_t)} \leq |\mu'|(t)$ (where $|\mu'|(t)$ denotes the metric derivative at time $t$ of the curve $t \mapsto \mu_t$, w.r.t. the distance $W_p$);
\end{itemize}
Conversely, if $(\mu_t)_{t \in [0,1]}$ is a family of measures in $\mathcal{P}_p(\Omega)$ and for each $t$ we have a vector field $v_t \in L^p(\mu_t; \mathbb{R}^d)$ with $\int_0^1 \|v_t\|_{L^p(\mu_t)} \, dt < +\infty$ solving $\partial_t \mu_t + \nabla \cdot (v_t \mu_t) = 0$, then $(\mu_t)_t$ is absolutely continuous in $W_p(\Omega)$, and for a.e. $t$, we have $|\mu'|(t) \leq \|v_t\|_{L^p(\mu_t)}$.
\end{theorem}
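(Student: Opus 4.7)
My plan is to prove the two implications separately, using duality between tangent vector fields and gradients of smooth test functions.

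For the forward direction, I start by noting that AC in $\mathbb{W}_p$ implies that the metric derivative $|\mu'|(t)$ exists and is finite for a.e.\ $t$. The key estimate is that for any $\phi \in C^1_c(\Omega)$, the map $t \mapsto \int \phi \, d\mu_t$ is absolutely continuous with the sharp bound
$$\left|\frac{d}{dt} \int \phi \, d\mu_t\right| \leq |\mu'|(t) \cdot \|\nabla \phi\|_{L^q(\mu_t)} \qquad \textnormal{for a.e. } t.$$
I will get this by fixing an optimal transport plan $\gamma^{s,t}$ between $\mu_s$ and $\mu_t$, writing the increment of $\int \phi \, d\mu_r$ as an integral of $\nabla \phi$ along the straight segments from $x$ to $y$, and applying H\"older's inequality on the joint measure $\gamma^{s,t} \otimes d\lambda$ with exponents $p$ and $q$. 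Dividing by $t-s$ and letting $t \to s$ yields the pointwise infinitesimal bound.

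Given this, for a.e.\ $t$ the linear form $\nabla \phi \mapsto \frac{d}{dt}\int \phi \, d\mu_t$ is continuous on the subspace of gradients inside $L^q(\mu_t; \R^d)$, with operator norm at most $|\mu'|(t)$. By Hahn-Banach extension and the duality $L^q(\mu_t)^\ast = L^p(\mu_t)$, I obtain $v_t \in L^p(\mu_t; \R^d)$ with $\|v_t\|_{L^p(\mu_t)} \leq |\mu'|(t)$ representing this form; selecting the unique element of minimal $L^p$ norm (which exists and is unique by strict convexity of $L^p$) gives a canonical $v_t$ lying in the Wasserstein tangent space $\overline{\{\nabla \phi : \phi \in C^1_c(\Omega)\}}^{L^p(\mu_t)}$. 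Integrating in $t$ produces the continuity equation in the sense of distributions.

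For the converse, given $v_t \in L^p(\mu_t)$ solving the continuity equation, I fix $0 \leq s < t \leq 1$ and rescale the curve to $[0,1]$ via $\tilde\mu_r := \mu_{s+r(t-s)}$ and $\tilde v_r := (t-s)\, v_{s+r(t-s)}$. The pair $(\tilde\mu_r, \tilde v_r)$ still solves a continuity equation joining $\mu_s$ to $\mu_t$, so the Benamou-Brenier inequality, which follows from \cref{BenamouBrenier} applied to the space-time measures $\tilde\mu_r \, dr$ and $\tilde v_r \tilde\mu_r \, dr$ together with the definition of $W_p$ via admissible transport plans, yields $W_p^p(\mu_s,\mu_t) \leq \int_0^1 \int |\tilde v_r|^p \, d\tilde\mu_r \, dr$. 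Changing variables back produces $W_p^p(\mu_s,\mu_t) \leq (t-s)^{p-1} \int_s^t \|v_r\|_{L^p(\mu_r)}^p \, dr$, and H\"older then yields $W_p(\mu_s,\mu_t) \leq \int_s^t \|v_r\|_{L^p(\mu_r)} \, dr$, from which both the absolute continuity of $(\mu_t)$ and the pointwise bound $|\mu'|(t) \leq \|v_t\|_{L^p(\mu_t)}$ at Lebesgue points follow.

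The main obstacle will be the measurable selection of $v_t$ in the forward direction, since the Hahn-Banach extension is a priori only pointwise in $t$. The canonical choice of the minimal $L^p$-norm representative (which coincides with the projection onto the Wasserstein tangent space) resolves this, as its defining identities can be tested against a countable dense family of smooth gradients, providing the required joint measurability in $(t,x)$.
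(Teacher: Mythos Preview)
The paper does not prove \cref{ACCurves}; it is listed among the preliminary facts cited from \cite{AGS,OTAM,VillaniO&N}. Your outline is essentially the standard argument from \cite[Chapter~8]{AGS}, and the forward direction is correctly sketched: the increment estimate via optimal plans and H\"older, followed by Hahn--Banach/$L^q$--$L^p$ duality and the minimal-norm selection for measurability, is exactly how the result is obtained there.

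There is one small slip in the converse direction. From your rescaled Benamou--Brenier inequality you obtain
\[
W_p^p(\mu_s,\mu_t)\le (t-s)^{p-1}\int_s^t \|v_r\|_{L^p(\mu_r)}^p\,dr,
\]
but H\"older goes the wrong way here: it gives $\bigl(\int_s^t \|v_r\|\,dr\bigr)^p\le (t-s)^{p-1}\int_s^t \|v_r\|^p\,dr$, so you cannot deduce $W_p(\mu_s,\mu_t)\le \int_s^t \|v_r\|\,dr$ directly from the displayed bound. Two easy fixes: either (i) observe that the displayed inequality already gives $|\mu'|(t)\le \|v_t\|_{L^p(\mu_t)}$ at Lebesgue points of $r\mapsto \|v_r\|^p$ by dividing by $(t-s)^p$ and letting $t\to s$ (and $p$-absolute continuity, hence absolute continuity, follows), or (ii) reparametrize the curve by arc length before applying Benamou--Brenier, which turns the right-hand side into $\bigl(\int_s^t \|v_r\|\,dr\bigr)^p$ and yields your claimed inequality exactly. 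Either way the conclusion stands; only the one-line justification needs adjusting. Note also that option (i) uses $\int \|v_r\|^p\,dr<\infty$, which is slightly stronger than the stated hypothesis $\int \|v_r\|\,dr<\infty$; option (ii) works under the weaker hypothesis.
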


\begin{definition}
    We say that an absolutely continuous curve $(\mu_t)_{t \in [0,1]}$ belongs to $\textnormal{AC}^p(\mathbb{W}_p(\Omega))$ if 
    \begin{align*}
        \int_0 ^T \int_\Omega \left|v_t \right|^p \diff \mu_t \diff t < \infty,
    \end{align*}
    where $(v_t)_{t\in [0,T]}$ is the vector field of minimal $L^p$ norm given by \cref{ACCurves}.
\end{definition}

\begin{definition}
\label{deffunctional}
Let $f: \R_+ \xrightarrow{} \R $ be a convex and l.s.c.\ function, and let $L := \lim_{t\xrightarrow{} + \infty} \frac{f(t)}{t}$. We define
\begin{equation*}
    \mathscr{F}(\mu) := \int_\Omega f(\varrho(x)) \diff x + L \mu^S(\Bar{\Omega}),
\end{equation*}
with $\mu = \varrho \cdot \diff x + \mu^S$, where $\p$ and $\mu^S$ are respectively the absolutely continuous (with respect to the Lebesgue measure) and the singular part of $\mu$. It is a convex and l.s.c. functional for the weak convergence of measures.\\
We will similarly use the notations $\mathscr{H},\mathscr{\Tilde{F}},\mathscr{\Tilde{F}}_1,\mathscr{\Tilde{F}}_2, \mathscr{L}$ to denote the functionals associated with the convex functions $h,\Tilde{f},\Tilde{f_1},\Tilde{f_2}, \ell$ respectively.
\end{definition}

\begin{proposition}
    If $f$ verifies (\textbf{H5}), then the functional $\mathscr{F}$ is geodesically convex in $\mathbb{W}_p$, i.e.\ it is convex along geodesics in $\mathbb{W}_p$.
\end{proposition}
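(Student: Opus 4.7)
The plan is to give the classical displacement convexity argument due to McCann, adapted to the $W_p$ setting as in \cite{Agueh} or \cite{AGS}. First I would reduce to the case where $\mu_0 = \p_0\,\diff x$ and $\mu_1 = \p_1\,\diff x$ are both absolutely continuous, by approximation with smooth densities and the weak lower semicontinuity of $\mathscr{F}$, together with the continuity of $W_p$-geodesics with respect to their endpoints. Under this reduction, the explicit geodesic formula from \cref{everythingtheorem} gives $\mu_t = (T_t)_\# \mu_0$ with $T_t = (1-t)\mathrm{id} + t\,T$ and $T(x) = x - (\nabla \varphi(x))^{q-1}$ for a $c$-concave Kantorovich potential $\varphi$. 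Since $T_t$ is $\mu_0$-a.e.\ injective with a positive Jacobian $J_t(x) := \det(DT_t(x))$ for a.e.\ $x$, the change of variables formula gives $\mathscr{F}(\mu_t) = \int_\Omega J_t(x)\,f(\p_0(x)/J_t(x))\,\diff x$.

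With the function $v(s) := s^d f(s^{-d})$ appearing in (H5), the rescaling $s = J_t(x)^{1/d}/\p_0(x)^{1/d}$ rewrites the integrand as $\p_0(x)\, v\bigl(J_t(x)^{1/d}/\p_0(x)^{1/d}\bigr)$. By hypothesis, $v$ is convex and non-increasing, so displacement convexity of $\mathscr{F}$ will follow once I establish that, for $\mu_0$-a.e.\ $x$, the map $t \mapsto J_t(x)^{1/d}$ is concave: composing a concave map with a convex non-increasing one yields a convex map, and integrating against the fixed non-negative weight $\p_0(x)\,\diff x$ preserves convexity in $t$.

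The key step is thus the concavity of $t \mapsto J_t(x)^{1/d}$. Writing $DT_t(x) = (1-t)I + t\,DT(x)$, this is an instance of the Minkowski determinant inequality $\det((1-t)A + tB)^{1/d} \geq (1-t)\det(A)^{1/d} + t\det(B)^{1/d}$ for symmetric positive semi-definite matrices $A,B$, applied with $A = I$ and $B = DT(x)$. What this requires is that $DT(x)$ has non-negative eigenvalues $\mu_0$-a.e. For $p=2$ this is immediate since $T$ is then the gradient of a convex function; for general $p > 1$ it demands an additional argument exploiting the $c$-cyclical monotonicity of the support of the optimal plan together with the explicit form $T = \mathrm{id} - (\nabla \varphi)^{q-1}$, as carried out in detail in \cite{Agueh}. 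I expect this to be the main technical obstacle, because for $p \neq 2$ the map $T$ is not a Brenier gradient and the standard symmetry and positivity structure is lost. The remaining technicalities---handling points where $\varphi$ is not twice differentiable via Alexandrov's theorem, and lifting the result from absolutely continuous measures to general $\mu_0,\mu_1 \in \prob(\Omega)$ by approximation---are standard.
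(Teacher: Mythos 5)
The paper states this proposition without proof, treating it as a classical fact from the cited references \cite{Agueh,AGS,VillaniO&N}; your outline reproduces exactly that standard McCann displacement-convexity argument adapted to $W_p$. You also correctly pinpoint the chief technical hurdle for $p \neq 2$: since $T = \mathrm{id} - (\nabla\varphi)^{q-1}$ is no longer a Brenier gradient, $DT$ need not be symmetric, and one must separately establish (via $c$-concavity, as in \cite{Agueh}) that it has non-negative real eigenvalues $\mu_0$-a.e.\ before the concavity of $t \mapsto \det((1-t)I + tDT)^{1/d}$ can be invoked.
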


\begin{lemma}
\label{constructsuperlin}
    Let $\varphi : \R_+ \to \R_+$ be a superlinear convex function which is $0$ in a neighborhood of $0$. There exists a smooth strictly convex superlinear function $\Phi$ satisfying (\textbf{H5}) and a constant $C>0$ such that $\Phi \leq C (\varphi+1)$
\end{lemma}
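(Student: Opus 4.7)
The plan is to construct $\Phi$ by specifying its ``McCann profile'' $G \colon (0,\infty) \to \R$ via $\Phi(z) := z\,G(z^{-1/d})$, so that $s^d\Phi(s^{-d}) = G(s)$. Condition (\textbf{H5}) then reads simply ``$G$ convex and non-increasing''; $\Phi(0) = 0$ is ensured whenever $G$ is bounded at $+\infty$, and superlinearity of $\Phi$ is equivalent to $G(0^+) = +\infty$. A short computation gives
\begin{equation*}
\Phi''(z) = -\tfrac{d-1}{d^2}\,z^{-1/d-1}\,G'(z^{-1/d}) + \tfrac{1}{d^2}\,z^{-2/d-1}\,G''(z^{-1/d}),
\end{equation*}
a sum of two non-negative terms, so convexity of $\Phi$ is automatic and, for $d \geq 2$, strict convexity follows as soon as $G' < 0$. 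The case $d = 1$ is trivial since (\textbf{H5}) is then satisfied by every convex function, so I focus on $d \geq 2$.

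Starting from $\varphi$, I introduce $m(z) := \varphi(z)/z$ (non-decreasing, tending to $+\infty$) and $h(s) := m(s^{-d})$ (non-increasing with $h(0^+) = +\infty$), and build $G$ by interpolation. First I pick a strictly decreasing sequence $s_n \downarrow 0$ with $h(s_n) \geq n$ (possible because $h(0^+) = +\infty$), and on each $[s_{n+1}, s_n]$ I set $G$ to be the affine function with $G(s_n) = n$ and $G(s_{n+1}) = n+1$; its slope is $-1/(s_n - s_{n+1}) < 0$. Such a piecewise-affine $G$ is globally convex precisely when the gaps $s_n - s_{n+1}$ are non-increasing in $n$, which holds whenever $s_n$ decreases ``regularly enough'' (a geometric sequence suffices), and can otherwise be enforced by thinning $(s_n)$. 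I extend $G$ for $s \geq s_1$ by the convex strictly decreasing tail $G(s) := s_1/s$, compatible at $s_1$; then $G$ is globally convex non-increasing, bounded at $+\infty$, with $G(0^+) = +\infty$ (since $G(s) \geq n$ for $s \leq s_n$), and satisfies $G(s) \leq n+1 \leq h(s) + 1$ on $[s_{n+1}, s_n]$. A narrow mollification at each kink yields a $C^\infty$ strictly convex $G$ that retains all the properties, since on each piece $G$ is already strictly convex (after the transformation) and the kinks are replaced by smooth convex arcs.

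Setting $\Phi(z) := z\,G(z^{-1/d})$, smoothness and (\textbf{H5}) are built in; strict convexity follows from the formula above since $G' < 0$ and $d \geq 2$; superlinearity is immediate from $G(0^+) = +\infty$. The pointwise bound $G \leq h + 1$ translates into $\Phi(z) \leq z\,h(z^{-1/d}) + z = \varphi(z) + z$, and superlinearity of $\varphi$ (which gives $z \leq \varphi(z)$ for $z$ large, while $z$ is bounded on compacts near $0$) yields $\Phi \leq C(\varphi + 1)$. The main obstacle I foresee is that the sequence $(s_n)$ must simultaneously satisfy $h(s_n) \geq n$ (adapted to the possibly very slow blow-up of $h$ at $0$) and the mild monotonicity of gaps; both can always be arranged thanks to $h(0^+) = +\infty$, but this adaptive thinning is the delicate ingredient of the construction.
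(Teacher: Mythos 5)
Your proof is correct and follows the same overall strategy as the paper: pass to the McCann profile (your $G(s)=s^d\Phi(s^{-d})$, the paper's $\tilde\Phi$), build there a convex, non-increasing function with $G(0^+)=+\infty$ dominated by $\mathcal{M}\varphi+1$, and pull back via $\Phi(z)=zG(z^{-1/d})$. The difference lies in how the profile is built. The paper takes the lower convex hull of $\mathcal{M}\varphi$ — automatically convex, non-increasing and $\leq\mathcal{M}\varphi$ — and then establishes $\tilde\Phi(0^+)=+\infty$ (i.e.\ superlinearity of $\Phi$) by a contradiction argument using superlinearity of $\varphi$. You instead build $G$ explicitly as a piecewise-affine interpolation with $G(s_n)=n$, which makes $G(0^+)=+\infty$ manifest but forces you to enforce convexity by hand; your ``thinning'' step is a bit vague as stated, but can be made precise by choosing $s_{n+1}:=\min\big(\sigma_{n+1},\,s_n/2\big)$ where $\sigma_n:=\sup\{s>0:\mathcal{M}\varphi(s)\geq n\}$, which guarantees $\mathcal{M}\varphi(s_n)\geq n$ and the gaps $g_n=s_n-s_{n+1}$ are non-increasing, since $g_n\geq s_n/2\geq s_{n+1}\geq g_{n+1}$. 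The remaining steps (smoothing, strict convexity for $d\geq 2$ via $G'<0$, the trivial $d=1$ case) are treated at a comparable level of rigor in both proofs. On balance the paper's convex-hull route is slightly cleaner; yours is more constructive and makes the superlinearity check automatic. Both are valid.
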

\begin{proof}
We look at the function 
\begin{align*}
    \mathcal{M}\phi (s) := s^d \phi(s^{-d}),
\end{align*}
and define $\Tilde{\Phi}$ to be its lower convex hull. Since $\phi$ is $0$ near $0$, we deduce that $\Tilde{\Phi}$ is eventually constantly $0$, which implies that $\Tilde{\Phi}$ is decreasing, being a convex function. We take 
\begin{align*}
    \Phi (z) = \mathcal{M}^{-1}\Tilde{\Phi} (z) = \Tilde{\Phi}(z^{-\frac{1}{d}})z,
\end{align*}
and find that $\Phi$ is superlinear if $\Tilde{\Phi}(0^+)=+\infty$. Supposing that this is not the case, i.e.\ that there exists some $M >0$ such that $\Tilde{\Phi}\leq M$ near 0, we set
\begin{align*}
    a=\sup_{s >0} \frac{M+1 - \mathcal{M}\phi(s)}{s} < +\infty,
\end{align*}
which is well defined because since $\phi$ is superlinear, $\mathcal{M}\phi(0^+)=+\infty$. We therefore have that
\begin{align*}
    \mathcal{M}\phi(s) \geq M+1 -as,
\end{align*}
so that the lower convex hull of $ \mathcal{M}\phi$, being the supremum of affine functions lower than itself, verifies $\Tilde{\Phi}(0^+) \geq M+1$ when we take $s \to 0$ which yields a contradiction. Up to finding a smooth approximation of $\Tilde{\Phi}$, for example via convolution, we can also assume that $\Phi$ is smooth. It is known (see for example \cite{VillaniO&N} chapter 17) that a superlinear convex function satisfying McCann's condition (\textbf{H5}) is strictly convex away from $0$, therefore up to adding a convex function to $\Tilde{\Phi}$, that is linear near $0$ (which means adding a sublinear function to $\varphi$ near $+\infty$) and that strictly decreases to a constant thereafter (which keeps $\Phi$ bounded near $0$), we can guarantee that $\Phi = \mathcal{M}^{-1} \Tilde{\Phi}$ is strictly convex everywhere, and that $\Phi \leq C (\varphi +1)$ for some constant $C>0$.
\end{proof}

\subsection{Approximating problem}
In this first section we will look at the following variational problem defining the next step in the JKO scheme we will use later :
\begin{align}
\label{jkoproblem}
\p \in \argmin \frac{W^p _p(\p , g)}{p\tau ^{p-1}} + \mathscr{F}(\p).
\end{align}
We will assume that $f$ satisfies (\textbf{H1,H2,H3}).
To gain some regularity for the minimizers of the problem, we will approximate it by adding a small entropy term.
\begin{align}
\label{approxpb}
    \mu_\varepsilon \in \argmin_\mu \frac{W_p ^p (\mu, g)}{p\tau ^{p-1}} + \mathscr{F}_\varepsilon (\mu) 
\end{align}
where $\mathscr{F}_\varepsilon$ is the functional associated with $f_\varepsilon (z) = f(z) + \varepsilon z\log(z)$, i.e.
\begin{align*}
    \mathscr{F}_\varepsilon (\mu) = \mathscr{F} (\mu)+ \begin{cases}
        \varepsilon \int_\Omega \varrho \log (\varrho)\diff x &\textnormal{if } \mu$= $\varrho \diff x \\
        + \infty &\textnormal{otherwise}. 
    \end{cases}
\end{align*}

\begin{lemma}
    The solution of \eqref{approxpb} is given by a density that is bounded from above and away from $0$, and is actually Lipschitz continuous.
\end{lemma}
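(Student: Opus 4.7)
The plan is to first establish existence of a minimizer $\mu_\varepsilon$ by the direct method: both $\mu \mapsto W_p^p(\mu, g)$ and $\mathscr{F}_\varepsilon$ are lower semicontinuous for the weak convergence of measures on the compact set $\prob(\Omega)$. Since $z \log z$ is superlinear, $\mathscr{F}_\varepsilon(\mu)$ is finite only when $\mu$ has no singular part, so the minimizer is automatically absolutely continuous; write $\mu_\varepsilon = \p_\varepsilon \diff x$.

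Next, I would derive the Euler--Lagrange optimality condition by means of mass-preserving perturbations $\p_\varepsilon + s\chi$ with $s \to 0$. Using the classical first-variation formula for the Wasserstein functional---the Kantorovich potential $\varphi$ of the transport from $\p_\varepsilon \diff x$ to $g$ serves as first variation, cf.\ \cref{everythingtheorem}---together with the direct variation of $\mathscr{F}_\varepsilon$, one obtains the pointwise relation
\begin{equation*}
f'(\p_\varepsilon) + \varepsilon \log \p_\varepsilon = C - \frac{\varphi}{\tau^{p-1}} \quad \text{a.e.\ on } \{\p_\varepsilon > 0\},
\end{equation*}
for some constant $C \in \R$, together with the analogous $\geq$ inequality on $\{\p_\varepsilon = 0\}$. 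Because $f'(z) + \varepsilon \log z \to -\infty$ as $z \to 0^+$, this inequality on a zero set of positive measure would force $-\infty \geq C - \varphi/\tau^{p-1}$, which is impossible. Hence $\p_\varepsilon > 0$ almost everywhere.

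Setting $\Psi_\varepsilon(z) := f'(z) + \varepsilon \log z$, this function is continuous, strictly increasing on $(0, +\infty)$ (since $\Psi_\varepsilon'(z) = f''(z) + \varepsilon/z > 0$), with $\Psi_\varepsilon(0^+) = -\infty$ and $\Psi_\varepsilon(+\infty) = +\infty$, hence a homeomorphism onto $\R$. Inverting the optimality relation yields
\begin{equation*}
\p_\varepsilon = \Psi_\varepsilon^{-1}\!\left(C - \frac{\varphi}{\tau^{p-1}}\right).
\end{equation*}
The Kantorovich potential $\varphi$ is Lipschitz on the bounded set $\Omega$ by \cref{everythingtheorem}, so its range is a compact interval $[a,b] \subset \R$; consequently $\p_\varepsilon$ takes values in the compact interval $[m, M] := \Psi_\varepsilon^{-1}([a, b]) \subset (0, +\infty)$, which simultaneously provides the upper bound and the bound away from zero. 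On $[m, M]$ we moreover have $\Psi_\varepsilon'(z) \geq \varepsilon / M > 0$, so $\Psi_\varepsilon^{-1}$ is Lipschitz on $[a, b]$, and composing with the Lipschitz $\varphi$ gives that $\p_\varepsilon$ itself is Lipschitz.

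The main obstacle is the rigorous justification of the first variation of the Wasserstein term; this is standard and relies on Kantorovich duality, in that $\varphi/\tau^{p-1}$ belongs to the subdifferential of $\mu \mapsto \frac{1}{p\tau^{p-1}} W_p^p(\mu, g)$, and by choosing mass-preserving competitors bounded away from $0$ and $+\infty$ one may pass to the limit in the resulting inequality and recover the pointwise optimality relation. Once this step is secured, strict positivity and Lipschitz regularity are immediate consequences of the explicit formula for $\p_\varepsilon$.
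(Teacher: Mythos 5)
Your plan and the paper's proof share the same kernel: the Euler--Lagrange relation $f_\varepsilon'(\p_\varepsilon) + \varphi/(p\tau^{p-1}) = C$ a.e., combined with the fact that $\varphi$ is Lipschitz (hence bounded) and that $(f_\varepsilon')^{-1}$ is locally Lipschitz, which yields the explicit formula $\p_\varepsilon = (f_\varepsilon')^{-1}\!\left(C - \varphi/(p\tau^{p-1})\right)$ and, in one stroke, the two-sided bounds and the Lipschitz continuity. Where you diverge is in the logical architecture. The paper gets the $L^\infty$ bounds $\delta \leq \p_\varepsilon \leq 1/\delta$ \emph{first}, as a separate step, by adapting the truncation/level-set argument of Blanchet--Carlier (which exploits only $f_\varepsilon'(0^+) = -\infty$ and $f_\varepsilon'(+\infty) = +\infty$, not the pointwise optimality relation), and only then invokes the optimality condition on a density already known to be bounded above and away from zero; this makes the derivation of that condition (cited to OTAM \S 7.2.3) essentially painless. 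You instead want to extract the strict positivity and the bounds \emph{from} the optimality relation itself, using the inequality $f_\varepsilon'(\p_\varepsilon) + \varphi/(p\tau^{p-1}) \geq C$ on $\{\p_\varepsilon = 0\}$ together with $f_\varepsilon'(0^+) = -\infty$. This is more self-contained and conceptually tighter, but it front-loads the hard step: you need to rigorously establish the full pointwise optimality condition, including the one-sided inequality on the zero set and the existence of a single constant $C$, \emph{before} you have any a priori control on $\p_\varepsilon$. You flag this honestly as "the main obstacle," and it is indeed nontrivial (one has to be careful that admissible perturbations exist and that first variations are well defined when $\p_\varepsilon$ may be unbounded or vanish). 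The paper's two-step architecture sidesteps exactly this difficulty, which is why it reaches for Blanchet--Carlier first. Two cosmetic slips worth noting: the range of $\p_\varepsilon$ should be $\Psi_\varepsilon^{-1}\!\left(C - [a,b]/\tau^{p-1}\right)$, not $\Psi_\varepsilon^{-1}([a,b])$; and your $\varphi/\tau^{p-1}$ omits the factor $1/p$ from the $p$-JKO normalization, though this is absorbed in constants.
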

\begin{proof}
    An easy adaptation of the proof from \cite{BlanchetCarlier}, appendix B.4 shows that the conditions $f_\varepsilon(0^+) = - \infty$ and $f_\varepsilon '(+\infty)=+\infty$ implies that there exists $\delta >0$ such that the solution $\p_\varepsilon$ of \cref{approxpb} satisfies $\delta \leq \p_\varepsilon \leq \frac{1}{\delta}$. Since $f_\varepsilon''$ is bounded from below on compact sets, using the optimality condition (see \cite{OTAM} section 7.2.3 for its derivation), we have
    \begin{equation*}
        \frac{\varphi}{p\tau^{p-1}} + f_\varepsilon ' (\varrho_\varepsilon) = C \textnormal{  a.e.},
    \end{equation*}
    from which we deduce that $\varrho_\varepsilon$ is a Lipschitz function. Indeed, we can write 
    \begin{equation*}
        \p_\varepsilon = (f_\varepsilon ')^{-1} \left( C -  \frac{\varphi}{p\tau^{p-1}} \right),
    \end{equation*}
    and since $\varphi$ is Lipschitz we deduce the result.
\end{proof}

\begin{lemma}[Flow Interchange Technique]
\label{flowinterchange}
    Let $h : \R_+ \to \R$ satisfy (\textbf{H1-H2-H3-H5}) and $\p_\varepsilon$ be the solution of \eqref{approxpb}. Define $\mathscr{H}$ as in \cref{deffunctional}. If $\mathscr{H}(g) < + \infty$ then we have
    \begin{equation*}
        \mathscr{H}(g)  - \mathscr{H}(\p_\varepsilon) \geq \tau \int_\Omega \nabla (L_h(\p_\varepsilon)) \cdot  \left( \nabla f_\varepsilon '(\p_\varepsilon)\right)^{q-1} \diff x \geq 0.
    \end{equation*}
\end{lemma}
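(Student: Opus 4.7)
The strategy is to exploit the $\mathbb{W}_p$-displacement convexity of $\mathscr{H}$ guaranteed by (\textbf{H5}), by evaluating it along the $\mathbb{W}_p$-geodesic from $\p_\varepsilon$ to $g$, rather than going through an auxiliary $\mathbb{W}_2$-gradient flow. Let $T$ be the optimal transport map from $\p_\varepsilon$ to $g$. Combining the Euler--Lagrange relation from the previous lemma (linking $\nabla f'_\varepsilon(\p_\varepsilon)$ to the gradient of the Kantorovich potential) with the explicit expression of $T$ in terms of that gradient (\cref{everythingtheorem}, item 3), and using $(p-1)(q-1)=1$, one finds
\[
T - \text{id} = \tau \bigl(\nabla f'_\varepsilon(\p_\varepsilon)\bigr)^{q-1}.
\]

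Consider the displacement interpolation $\gamma_t := ((1-t)\,\text{id} + tT)_\# \p_\varepsilon$ for $t\in[0,1]$, which is the $\mathbb{W}_p$-geodesic from $\p_\varepsilon$ to $g$. Since $\p_\varepsilon$ is Lipschitz and bounded from $0$ and from above, $T$ is Lipschitz, and for small $t$ the map $T_t := (1-t)\,\text{id} + tT$ is a Lipschitz diffeomorphism of $\Omega$ with positive Jacobian $J_t$ satisfying $\partial_t J_t|_{t=0} = \nabla\cdot(T - \text{id})$. By change of variables $\mathscr{H}(\gamma_t) = \int_\Omega h(\p_\varepsilon/J_t)\, J_t\,\diff x$, and differentiating at $t=0$ via the identity $\frac{d}{ds}[s\, h(c/s)] = -L_h(c/s)$ gives
\[
\frac{d}{dt}\mathscr{H}(\gamma_t)\bigg|_{t=0} = -\int_\Omega L_h(\p_\varepsilon) \,\nabla\cdot(T - \text{id}) \,\diff x.
\]
By (\textbf{H5}), $t\mapsto\mathscr{H}(\gamma_t)$ is convex on $[0,1]$ with $\mathscr{H}(\p_\varepsilon)<\infty$ (from the regularity of $\p_\varepsilon$) and $\mathscr{H}(g)<\infty$ by assumption, so the convexity slope inequality yields
\[
-\int_\Omega L_h(\p_\varepsilon) \,\nabla\cdot(T - \text{id})\,\diff x \leq \mathscr{H}(g) - \mathscr{H}(\p_\varepsilon).
\]

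Integrating by parts on $\Omega$ and substituting the expression for $T - \text{id}$ rewrites the left-hand side as
\[
\tau\int_\Omega \nabla L_h(\p_\varepsilon) \cdot \bigl(\nabla f'_\varepsilon(\p_\varepsilon)\bigr)^{q-1} \diff x - \tau\int_{\partial \Omega} L_h(\p_\varepsilon) \bigl(\nabla f'_\varepsilon(\p_\varepsilon)\bigr)^{q-1} \cdot n \,\diff \mathcal{H}^{d-1}.
\]
The boundary term has the right sign to be dropped: $\Omega$ convex with $T(\Omega)\subset\Omega$ forces $(T-\text{id})\cdot n \leq 0$ on $\partial\Omega$, while (\textbf{H5}) implies $h(0)=0$ and $L'_h(z) = zh''(z)\geq 0$, so $L_h \geq 0$. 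Discarding this nonpositive term preserves the inequality and yields the claimed estimate. Nonnegativity of the bulk integrand is then immediate: writing $\nabla L_h(\p_\varepsilon) = \p_\varepsilon h''(\p_\varepsilon)\nabla\p_\varepsilon$ and $(\nabla f'_\varepsilon(\p_\varepsilon))^{q-1} = (f''_\varepsilon(\p_\varepsilon))^{q-1}(\nabla\p_\varepsilon)^{q-1}$, the pointwise integrand equals $\p_\varepsilon h''(\p_\varepsilon)(f''_\varepsilon(\p_\varepsilon))^{q-1}|\nabla\p_\varepsilon|^q \geq 0$ by (\textbf{H3}).

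The main technical point is justifying the differentiation of $\mathscr{H}(\gamma_t)$ under the integral sign at $t=0$, which requires uniform-in-$t$ control on $J_t$ for small $t$; this is exactly afforded by the Lipschitz/boundedness properties of $\p_\varepsilon$ from the preceding lemma. The boundary handling in the integration by parts is delicate but self-correcting via the sign considerations above.
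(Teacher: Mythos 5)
Your overall strategy mirrors the paper's: use the displacement convexity of $\mathscr{H}$ along the $\mathbb{W}_p$-geodesic from $\p_\varepsilon$ to $g$, identify the one-sided derivative at $t=0$ via the Jacobian of the interpolated map, integrate by parts, discard the boundary term by convexity of $\Omega$, and substitute $T-\text{id}=\tau(\nabla f'_\varepsilon(\p_\varepsilon))^{q-1}$ from the Euler--Lagrange condition. The sign arguments ($L_h\geq 0$, $(T-\text{id})\cdot n\leq 0$, pointwise nonnegativity of the bulk integrand) are correct.

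However, there is a genuine gap in your regularity bookkeeping. You assert that ``since $\p_\varepsilon$ is Lipschitz and bounded away from $0$ and $\infty$, $T$ is Lipschitz.'' This does not follow. The optimality condition gives you a Lipschitz Kantorovich potential $\varphi$ and the relation $T=\text{id}-(\nabla\varphi)^{q-1}$, so $T$ is only bounded, not Lipschitz; the regularity of $T$ also depends on the target $g$, on which you have no control beyond $\mathscr{H}(g)<\infty$. Without Lipschitz regularity of $T$, the map $T_t$ is not a diffeomorphism, the classical divergence $\nabla\cdot(T-\text{id})$ does not exist pointwise, and the change-of-variables formula $\mathscr{H}(\gamma_t)=\int_\Omega h(\p_\varepsilon/J_t)J_t\,\diff x$ followed by differentiation under the integral is not justified as stated. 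This is precisely the delicate point that the paper handles differently: it follows AGS Sections 10.4.3 and 10.4.5, replacing the classical Jacobian with the approximate gradient $\tilde\nabla T$, writing the derivative of the energy as $-\int_\Omega L_h(\p_\varepsilon)\,\mathrm{Tr}(\tilde\nabla(T-\text{id}))\,\diff x$, and then approximating $T$ by BV maps with nonnegative distributional divergence in order to carry out the integration by parts against the $W^{1,1}$ function $L_h(\p_\varepsilon)$, using $L_h\geq 0$ to pass the sign through. Your ``self-correcting boundary handling'' remark does not rescue this: the issue is not the boundary term but the interior differentiation and integration by parts, which require the BV approximation argument. To repair the proof you would need either to invoke these AGS results explicitly, or to run an independent approximation of $T$ (e.g.\ by smooth maps with controlled distributional divergence) before differentiating and integrating by parts.
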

\begin{proof}
    Assumption (\textbf{H5}) implies that $\mathscr{H}$ is convex along geodesics, therefore we will look at the geodesic $[0,1]\ni t\mapsto\p_t$ from $\p_\varepsilon$ to $g$. By convexity we have
    \begin{equation*}
        \mathscr{H}(g) - \mathscr{H}(\p_\varepsilon) \geq \lim_{t\to 0} \frac{\mathscr{H}(\p_t)-\mathscr{H}(\p_\varepsilon)}{t}.
    \end{equation*}
    Following \cite{AGS} section 10.4.3, we find that
    \begin{align*}
        \lim_{t\to 0} \frac{\mathscr{H}(\p_t)-\mathscr{H}(\p_\varepsilon)}{t} \geq - \int_\Omega L_h (\p_\varepsilon) \textnormal{Tr}\left(\Tilde{\nabla}(T - id) \right) \diff x,
    \end{align*}
    where $T$ is the optimal transport map from $\p_\varepsilon$ to $g$ and $\Tilde{\nabla}T$ denotes its approximate gradient. Again adapting the proof of Theorem 10.4.5 of \cite{AGS}, up to approximating $T$ with BV functions with nonnegative distributional divergence and using that $L_h(\p_\varepsilon) \geq 0$ is $W^{1,1}(\Omega)$, we have
    {\small
    \begin{align*}
        - \int_\Omega L_h (\p_\varepsilon) \textnormal{Tr}\left(\Tilde{\nabla}(T - id) \right) \diff x &\geq \int_\Omega \nabla \left(L_h(\p_\varepsilon) \right) \cdot \left(T - \textnormal{id}\right) \diff x - \int_{\partial \Omega} L_h (\p_\varepsilon) (T-\textnormal{id} )\cdot n \diff \sigma 
    \end{align*}
    }
    Using the convexity of the domain and the fact that $T$ points inwards $\p_\varepsilon$ a.e. and $L_h(0)=0$, the boundary integral is non positive. Finally, using the optimality conditions for $\p_\varepsilon$ we have
    \begin{align*}
        T- \textnormal{id} = -\left(\nabla \varphi\right)^{q-1}= \tau \left(\nabla f_\varepsilon '(\p_\varepsilon) \right)^{q-1},
    \end{align*}
    and we obtain 
    \begin{align*}
        - \int_\Omega L_h (\p_\varepsilon) \textnormal{Tr}\left(\Tilde{\nabla}(T - id) \right) \diff x &\geq \tau\int_\Omega \nabla (L_h(\p_\varepsilon)) \cdot \nabla \left( f_\varepsilon '(\p_\varepsilon)\right)^{q-1} \diff x \\
        &= \tau \int_\Omega h''(\p_\varepsilon)f_\varepsilon ''(\p_\varepsilon)^{q-1}\left| \nabla \p_\varepsilon \right|^q \diff \p_\varepsilon \geq 0
    \end{align*}
\end{proof}

\begin{lemma}
\label{gammacvg}
    We have $\Gamma$-convergence of the functionals $\mathscr{F}_\varepsilon \xrightarrow[\varepsilon \to 0]{\Gamma} \mathscr{F}$ for the topology of weak convergence of measures.
\end{lemma}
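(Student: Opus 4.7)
The plan is to verify the two defining inequalities of $\Gamma$-convergence for the weak topology of measures: a liminf inequality for any weakly converging sequence, and the existence of a recovery sequence.

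For the liminf inequality, given any sequence $\mu_\varepsilon \rightharpoonup \mu$, I may assume (extracting a subsequence realizing the liminf) that $\mathscr{F}_\varepsilon(\mu_\varepsilon)$ is finite, so each $\mu_\varepsilon$ is absolutely continuous with density $\p_\varepsilon$. The entropy satisfies the uniform lower bound
\begin{equation*}
\int_\Omega \p_\varepsilon \log \p_\varepsilon \diff x \;\geq\; -\log|\Omega|,
\end{equation*}
obtained by applying Jensen's inequality to the convex function $s \mapsto s\log s$ against the probability measure $\diff x /|\Omega|$ (whose $\p_\varepsilon$-integral is $1/|\Omega|$). Hence $\varepsilon \int \p_\varepsilon \log \p_\varepsilon \diff x \to 0$, and the lower semicontinuity of $\mathscr{F}$ from \cref{deffunctional} yields $\liminf_{\varepsilon \to 0} \mathscr{F}_\varepsilon(\mu_\varepsilon) \geq \liminf_{\varepsilon \to 0} \mathscr{F}(\mu_\varepsilon) \geq \mathscr{F}(\mu)$.

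For the recovery sequence, the case $\mathscr{F}(\mu) = +\infty$ is trivial, so I assume $\mathscr{F}(\mu) < +\infty$. When $\mu = \p \diff x$ has a bounded density, the constant sequence $\mu_\varepsilon = \mu$ works directly, since $\p \log \p \in L^\infty(\Omega) \subset L^1(\Omega)$ and $\varepsilon \int \p \log \p \diff x \to 0$. For a general $\mu$, possibly with a singular part, I would first approximate $\mu$ by a sequence $\mu_n$ of measures with bounded density such that $\mu_n \rightharpoonup \mu$ and $\mathscr{F}(\mu_n) \to \mathscr{F}(\mu)$, and then extract diagonally to obtain the recovery sequence for $\mathscr{F}_\varepsilon$. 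The natural candidate is the mollification $\mu_n = \mu * \eta_n$, suitably adapted to the bounded convex domain $\Omega$ (e.g.\ by a slight rescaling keeping the support inside $\Omega$). For this approximation, the bound $\liminf \mathscr{F}(\mu_n) \geq \mathscr{F}(\mu)$ comes from lower semicontinuity, while $\limsup \mathscr{F}(\mu_n) \leq \mathscr{F}(\mu)$ follows from Jensen's inequality applied to $f$ against the probability $\eta_n(x-\cdot)\diff y$, combined with Fubini.

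The main obstacle is the treatment of a nontrivial singular part $\mu^S$ in the recovery sequence: one must ensure that the mollification, which smears singular mass into a smooth density of order $\|\eta_n\|_\infty$, contributes asymptotically exactly $L\mu^S(\bar{\Omega})$ to $\mathscr{F}(\mu * \eta_n)$. This is where the recession asymptotics $f(z)/z \to L$ as $z \to +\infty$ enter, and requires a careful decomposition of the density $\mu * \eta_n = (\p \diff x) * \eta_n + \mu^S * \eta_n$ so that, in the limit, the singular contribution aligns with the $L\mu^S(\bar{\Omega})$ term in the definition of $\mathscr{F}$ while the absolutely continuous contribution tends to $\int f(\p) \diff x$ by standard $L^1$-convergence of $\p * \eta_n$ to $\p$.
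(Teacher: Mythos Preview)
Your argument is essentially correct but follows a different route from the paper. One minor slip: from the Jensen lower bound you only get $\varepsilon\int\p_\varepsilon\log\p_\varepsilon \geq -\varepsilon\log|\Omega|$, not that this term tends to $0$ (the entropy could be large and positive); however this one-sided bound is exactly what is needed for $\liminf \mathscr{F}_\varepsilon(\mu_\varepsilon)\geq \liminf \mathscr{F}(\mu_\varepsilon)$, so your conclusion stands.

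The paper proceeds differently on both halves. For the liminf it does not argue directly: it observes that, up to an additive constant, $\varepsilon\mapsto\mathscr{F}_\varepsilon$ is monotone nonincreasing, so by the general theory of monotone $\Gamma$-convergence the $\Gamma$-limit is automatically the l.s.c.\ relaxation of the pointwise limit $\tilde{\mathscr{F}}$ (equal to $\mathscr{F}$ on finite-entropy densities, $+\infty$ elsewhere). The remaining work is then to identify $\mathrm{sc}^-\tilde{\mathscr{F}}=\mathscr{F}$. For the recovery sequence the paper treats the absolutely continuous and singular parts \emph{separately}: it truncates $\p$ at level $n$ and replaces the excess by its average on $\{\p>n\}$ (using Jensen to control $\int f(\p_n)$), while the singular part is first approximated by finite sums of Diracs and only then mollified; the two bounded pieces are added and the elementary convexity inequality $f(a+b)\leq f(a)+Lb$ handles the interaction. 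Your proposal mollifies the whole measure at once, which is the standard relaxation argument and does work, but the interaction between $(\p\,\diff x)*\eta_n$ and $\mu^S*\eta_n$ that you flag as ``the main obstacle'' is precisely what the paper sidesteps by decoupling the two parts. The paper's construction is thus more elementary and self-contained, whereas yours is shorter to state but leans on the (nontrivial) fact that mollification does not increase $\mathscr{F}$ in the presence of a singular part.
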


\begin{proof}
    Since $z \mapsto z\log(z)$ is bounded from below, up to adding a constant we can assume that the sequence $\left(\mathscr{F}_\varepsilon \right)_\varepsilon$ is nonincreasing as $\varepsilon \to 0$. This monotonicity ensures that the sequence Gamma converges to the lower semi-continuous relaxation of its pointwise limit (see \cite{braides2002gamma})
    \begin{align*}
        \Tilde{\mathscr{F}} : \mu \mapsto \mathscr{F}(\mu) + \begin{cases} 0
        &\textnormal{if } \mu$= $\varrho \diff x \textnormal{ and }  \int_\Omega \varrho \log (\varrho)\diff x < \infty \\
        + \infty &\textnormal{otherwise}. 
    \end{cases}
    \end{align*}
We now have to show that the l.s.c\ relaxation $\textnormal{sc}^- \Tilde{\mathscr{F}}$ of this functional is indeed $\mathscr{F}$. First we can notice that $\mathscr{F} \leq \Tilde{\mathscr{F}}$ and since $\mathscr{F}$ is l.s.c., we have $\mathscr{F} \leq \textnormal{sc}^- \Tilde{\mathscr{F}}$. To get the opposite inequality, it is enough to find a recovery sequence of probability measures that are given by bounded densities, since bounded densities have finite entropy. We take a probability measure $\mu \in \prob (\Bar{\Omega})$ whose Lebesgue decomposition is $\mu = \p \diff x + \mu^S$ and we can assume that $\mathscr{F}(\mu) < \infty$. We define the set $A_n=\left\{\p > n\right\}$ and
\begin{align*}
    \p_n (x) = \begin{cases}
        \p(x) &\textnormal{ when  } x \in A_n ^c \\
        \fint_{A_n} \p(y) \diff y &\textnormal{ when  } x \in A_n.
    \end{cases}
\end{align*}
We have, using dominated convergence, 
\begin{align*}
    \int_\Omega \left| \p(x) - \p_n(x) \right| \diff x =  \int_{A_n} \left| \p(x) - \fint_{A_n} \p(y) \diff y \right| \diff x  \leq 2 \int_{A_n} \p (x) \diff x \xrightarrow{n \to \infty} 0,
\end{align*}
so that $\p_n$ converges to $\p$ in $L^1$ and $\p_n(\Omega)= \p(\Omega)$.
For the singular part, we approximate $\mu^S$ with linear combinations of Dirac masses supported on $\Omega$, and then take the convolution with a standard mollifier with small enough support so that no mass escapes outside of the domain. We obtain a sequence $\mu_n ^S$ of $L^\infty$ densities such that $\mu_n ^S \to \mu^S$ as measures, and $\mu_n (\Omega) = \mu (\Omega)$. We have therefore constructed a sequence of probability measures $\mu_n = \p_n \diff x + \mu_n ^S$ that converges to $\mu$, and we have, using the convexity of $f$, 
\begin{align*}
    \Tilde{\mathscr{F}}(\mu_n) = \int_\Omega f(\p_n (x) + \mu_n ^S (x)) \diff x \leq \int_\Omega f(\p_n)\diff x + L \int_\Omega \mu_n (x) \diff x.
\end{align*}
For the first term, we use Jensen's inequality to get
\begin{align*}
    \int_\Omega f(\p_n)\diff x &= \int_{A_n ^c} f(\p)\diff x + \int_{A_n} f\left( \fint_{A_n } \p(y) \diff y \right) \diff x \\
    & \leq \int_{A_n ^c} f(\p)\diff x +  \int_{A_n} \left( \fint_{A_n } f(\p(y)) \diff y \right) \diff x = \int_\Omega f(\p) \diff x .
\end{align*}
Remembering that we constructed $\mu_n ^S$ so that the mass of the singular part was conserved, we therefore obtain
\begin{align*}
    \limsup_{n} \Tilde{\mathscr{F}}(\mu_n) \leq \int_\Omega f(\p) \diff x + L \mu^S (\Bar{\Omega}) = \mathscr{F}(\mu),
\end{align*}
so that $\textnormal{sc}^- \Tilde{\mathscr{F}}= \mathscr{F}$
\end{proof}

\begin{lemma}
\label{convergencesolapprox}
    The solution $\mu_\varepsilon$ of the approximating problem \eqref{approxpb} weakly converges as measures to the solution $\mu$ of \eqref{jkoproblem} when $\varepsilon \to 0 $.
\end{lemma}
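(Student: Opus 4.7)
The plan is to invoke the general principle that $\Gamma$-convergence together with precompactness of minimizers yields convergence of minimizers to a minimizer of the $\Gamma$-limit. Setting
\[
J_\varepsilon(\mu):=\mathscr{F}_\varepsilon(\mu)+\frac{W_p^p(\mu,g)}{p\tau^{p-1}},\qquad J(\mu):=\mathscr{F}(\mu)+\frac{W_p^p(\mu,g)}{p\tau^{p-1}},
\]
so that $\mu_\varepsilon$ minimizes $J_\varepsilon$ and $\mu$ minimizes $J$, I would first note that since $\Omega$ is bounded the distance $W_p$ metrizes the narrow topology on $\prob(\bar{\Omega})$; in particular the map $\mu\mapsto W_p^p(\mu,g)/(p\tau^{p-1})$ is continuous with respect to weak convergence of measures. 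Since $\Gamma$-convergence is stable under addition of a continuous functional, \cref{gammacvg} upgrades to $J_\varepsilon\xrightarrow{\Gamma}J$.

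Next I would use weak sequential compactness of $\prob(\bar{\Omega})$ to extract a subsequence along which $\mu_\varepsilon\rightharpoonup\mu^\star$ for some probability measure $\mu^\star$. The $\Gamma$-liminf inequality gives $J(\mu^\star)\le\liminf_{\varepsilon\to 0}J_\varepsilon(\mu_\varepsilon)$. To produce a matching upper bound I would take a recovery sequence $\tilde\mu_\varepsilon\rightharpoonup\mu$ at the minimizer $\mu$ of $J$, furnished by the $\Gamma$-limsup inequality, and combine it with the minimality $J_\varepsilon(\mu_\varepsilon)\le J_\varepsilon(\tilde\mu_\varepsilon)$ to obtain $\limsup_{\varepsilon\to 0}J_\varepsilon(\mu_\varepsilon)\le J(\mu)$. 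Putting both inequalities together yields $J(\mu^\star)\le J(\mu)$, so that $\mu^\star$ is itself a minimizer of $J$.

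To upgrade this subsequential statement to full weak convergence I would invoke uniqueness of the minimizer of $J$. Strict convexity of $f$ from (\textbf{H3}) makes $\rho\mapsto\int f(\rho)\diff x$ strictly convex in densities, while $W_p^p(\cdot,g)$ is convex (as a supremum of linear functionals via Kantorovich duality); together they give at most one minimizer of $J$. Hence $\mu^\star=\mu$, and since every weakly convergent subsequence of $(\mu_\varepsilon)$ has the same limit, the entire family $\mu_\varepsilon$ converges weakly to $\mu$ as $\varepsilon\to 0$.

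No step of this plan should present a genuine obstacle: the heavy lifting has already been performed in \cref{gammacvg}, and the rest is the textbook passage from $\Gamma$-convergence to convergence of minimizers. The only point that deserves some care is the uniqueness step, but it follows cleanly from the strict convexity encoded in (\textbf{H3}) together with the convexity of the transport term.
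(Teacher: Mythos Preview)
Your proposal is correct and follows essentially the same route as the paper's proof: the paper also adds the continuous Wasserstein term to the $\Gamma$-convergence of \cref{gammacvg}, appeals to compactness of $\prob(\bar\Omega)$, and invokes uniqueness of the limit minimizer to pass from subsequential to full convergence. You simply spell out the standard ``$\Gamma$-convergence plus equicoercivity implies convergence of minimizers'' argument (and the reason for uniqueness) more explicitly than the paper does.
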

\begin{proof}
    Using \cref{gammacvg}, and the fact that the Wasserstein distance metrizes the weak convergence of measures, for a given probability measure $g\in \prob(\Bar{\Omega})$, we have
    \begin{align*}
         \mu \mapsto \frac{W_p ^p (\mu, g)}{p\tau ^{p-1}} + \mathscr{F}_\varepsilon (\mu) \xrightarrow[\varepsilon \to 0]{\Gamma} \mu \mapsto   \frac{W_p ^p (\mu, g)}{p\tau ^{p-1}} + \mathscr{F} (\mu).
    \end{align*}
    Since $\Bar{\Omega}$ is compact, $\prob(\Bar{\Omega})$ is compact for the weak convergence of probability measures so that we can assume that the sequence of minimums weakly converge. Because the limit problem has a unique solution $\mu$, we deduce that the sequence of minima $\mu_\varepsilon$ converges to $\mu$. 
\end{proof}

\begin{remark}
    In the sequel, we will always have $g \in L^1(\Omega)$. As a consequence of \cref{constructsuperlin,flowinterchange,convergencesolapprox} one can show that the solution of problem \eqref{jkoproblem} is an absolutely continuous measure $\mu= \p \diff x$ (see the proof of \cref{superlinlemma}). We shall therefore use the usual abuse of notation of denoting by $\p$ any measure $\mu = \p \diff x$ when the context is clear. 
\end{remark}

\section{Energy Dissipation Condition and properties of the limit curve} \par

The results of this section only require assumptions (\textbf{H1,H2,H3}) on the function $f$.
\subsection{Interpolations}

We look at the problem 
\begin{align}
\label{iteratedjko}
\p_{k+1}^\tau \in \argmin \frac{W^p _p(\p , \p_k ^\tau)}{p\tau ^{p-1}} + \mathscr{F}(\p)
\end{align}
Under our hypotheses, this problem has a unique solution and the sequence $(\p _{k})_{k \in \N}$ can be defined recursively.

We can define multiple interpolations which are all useful for our analysis :
\begin{definition}[Piecewise constant interpolation]
We define the piecewise constant interpolation as a piecewise constant curve $\Bar{\p}_t ^\tau$ and an associated velocity $\Bar{v}_t ^\tau$ : for $t \in (k\tau , (k+1) \tau ]$,
\begin{align*}
    &\Bar{\p}_t ^\tau=\p_{k+1} ^\tau, \\
    &\Bar{v}_t ^\tau= v_k ^\tau := \frac{\textnormal{Id}-T_k}{\tau},
\end{align*}
where $T_k$ is the optimal transport map from $\p_k ^\tau$ to $\p_{k+1} ^\tau$. We also define the piecewise constant momentum variable $\Bar{E}_t ^\tau =\Bar{\p}_t ^\tau \Bar{v}_t ^\tau$
\end{definition}

\begin{definition}[Piecewise geodesic interpolation]
We define the piecewise geodesic interpolation as a piecewise geodesic curve $\Tilde{\p}_t ^\tau$ and an associated velocity $\Tilde{v}_t ^\tau$ : for $t \in (k\tau , (k+1) \tau ]$,
\begin{align*}
    &\Tilde{\p}_t ^\tau=\left(\textnormal{Id} -(k\tau -t)v_k ^\tau\right)_\# \p_k^\tau , \\
    &\Tilde{v}_t ^\tau= v_k ^\tau \circ \left(\textnormal{Id} -(k\tau -t)v_k ^\tau\right)^{-1},
\end{align*}
which is constructed to satisfy the continuity equation 
\begin{align*}
    \partial_t \Tilde{\p}_t ^\tau + \nabla \cdot \left( \Tilde{\p}_t ^\tau \Tilde{v}_t ^\tau \right) =0.
\end{align*}
We also define the piecewise geodesic momentum variable $\Tilde{E}_t ^\tau =\Tilde{\p}_t ^\tau \Tilde{v}_t ^\tau$
\end{definition}
For this interpolation, we have for a.e. $t$, 
\begin{align*}
    \lVert \Tilde{v}_t ^\tau \rVert_{L^p (\Tilde{\p} _t ^\tau)}= \left| (\Tilde{\p}  ^\tau)'\right| (t) = \frac{W_p (\p _{k+1}, \p_k ^\tau)}{\tau},
\end{align*}
and therefore the following holds :
\begin{align}
\label{NormeVGeodesique}
    \frac{\tau}{p}  \lVert \Tilde{v}_t ^\tau \rVert_{L^p (\Tilde{\p} _t ^\tau)} = \frac{W^p _ p (\p_{k+1}^\tau, \p_k ^\tau)}{p\tau^{p-1}}. 
\end{align}

\begin{definition}[De Giorgi variational interpolation]
    We define the De Giorgi variational interpolation as a curve $\Hat{\p}_t ^\tau$ : for $t \in (k\tau , (k+1) \tau ]$, we define $s=\frac{t - k\tau}{\tau}\in (0,1]$ and
\begin{align}
\label{DeGiorgiargmin}
    &\Hat{\p}_t ^\tau=  \argmin \frac{W^p _p(\p , \p_k ^\tau)}{p(s \tau) ^{p-1}} + \mathscr{F}(\p).
\end{align}
In particular for $t=(k+1)\tau$ we have $s=1$ and we do retrieve $\Hat{\p}_{k+1} ^\tau = \p_{k+1} ^\tau $, and if $t=k\tau$, $s=0$ and the minimizer has to be $\p =\p_k ^\tau$.
\end{definition}
This interpolation allows us to derive the following precursor to the EDI interpretation of our gradient flow : 
\begin{align}
\label{EDIprecursor}
    \mathscr{F}(\p_k ^\tau) \geq \mathscr{F}(\p_{k+1}^\tau) + \frac{\tau}{q} \int_{k\tau} ^{(k+1)}  \frac{W^p _p (\Hat{\p}_t ^\tau, \p_k ^\tau) }{s (s\tau)^{p-1}} \diff t + \frac{W^p _p (\p_{k+1}^\tau, \p_k ^\tau)}{p \tau^{p-1}}.
\end{align}
Indeed if we define the function 
\begin{align*}
    g : t \in (k\tau, (k+1)\tau ] \mapsto  \min_{\p} \frac{W^p _p(\p , \p_k ^\tau)}{p(s \tau) ^{p-1}} + \mathscr{F}(\p),
\end{align*}
we find that $g$ is a nonincreasing function of $t$, and therefore differentiable almost everywhere. Furthermore, we have a one sided fundamental theorem of analysis :
\begin{align}
\label{FTA}
    \int_{k\tau} ^{(k+1)\tau }  g'(t) \diff t &\geq g((k+1)\tau) - g(k\tau) \\
    & = \frac{W^p _p (\p_{k+1}^\tau, \p_k ^\tau)}{p \tau^{p-1}} + \mathscr{F}(\p_{k+1}^\tau) - \mathscr{F}(\p_k ^\tau). \notag
\end{align}
To find the value of $g'(t)$ when it exists, we can write 
\begin{align*}
    A(t,\p)= \frac{W^p _p(\p , \p_k ^\tau)}{p(s \tau) ^{p-1}} + \mathscr{F}(\p),
\end{align*}
so that $g(t)=\min_{\p} A(t,\p)$. For a point $t_0$ such that $g'(t_0)$ exists we choose $\p_0\in\argmin_\p A(t_0,\p)$; we observe that we have $g(t)\leq A(t,\p_0)$ with equality at $t=t_0$; considering that $t\mapsto A(t,\p_0)$ is differentiable for every $t$, the derivatives of $g$ and of $t\mapsto A(t,\p_0)$ must agree at $t=t_0$, which is a minimum point of the difference. 
Therefore, we find that at differentiability points of $g$ we necessarily have 
\begin{align*}
    g'(t)=\frac{1}{q} \frac{W^p _p (\Hat{\p}_t ^\tau, \p_k ^\tau)}{(s\tau)^{p}},
\end{align*}
which gives \eqref{EDIprecursor} when combined with \cref{FTA}. We can now use the optimality conditions for \eqref{DeGiorgiargmin} : if we denote by $T_s$ and $\varphi_s$ respectively the optimal transport map and a Kantorovich potential between $\hat{\p}_t ^\tau$ and $\p_k ^\tau$, we have
\begin{align*}
    \frac{\nabla \varphi_s}{(s\tau)^{p-1}} = -\nabla f'(\Hat{\p}_s ^\tau)
\end{align*}
which we can replace in the formula for $W_p ^p$ using the definition of the optimal transport map :
\begin{align*}
    T_s- \textnormal{Id} = - \left(\nabla \varphi_s\right)^{q-1},
\end{align*}
and obtain
\begin{align*}
    \frac{1}{q} \frac{W^p _p (\Hat{\p}_t ^\tau, \p_k ^\tau)}{(s\tau)^{p}} = \frac{1}{q} \int_\Omega \frac{\left| x-T_s \right|^p}{(s\tau)^{p}}\diff \hat{\p}_s ^\tau = \frac{1}{q} \int_\Omega \left| \nabla f'(\Hat{\p}_t ^\tau) \right|^q \diff \Hat{\p}_t ^\tau .
\end{align*}
Finally, using \eqref{NormeVGeodesique} in \eqref{EDIprecursor} yields 
\begin{align*}
    \mathscr{F}(\p_k ^\tau) \geq \mathscr{F}(\p_{k+1}^\tau) + \frac{1}{q} \int_{k\tau} ^{(k+1)\tau }\int_\Omega \left| \nabla f'(\Hat{\p}_t ^\tau) \right|^q \diff \Hat{\p}_t ^\tau \diff t + \frac{1}{p} \int_{k\tau} ^{(k+1)\tau } \int_\Omega \left| \Tilde{v}^\tau _ t \right|^p \diff \Tilde{\p}_t ^\tau \diff t  .
\end{align*}
To easily justify the previous computations and later make it easier to pass to the limit we will use the approximation introduced in \eqref{approxpb} to obtain a different (but formally equivalent) form for the slope : 
\begin{align*}
    &\p^\varepsilon=  \argmin \frac{W^p _p(\p , \p_k ^\tau)}{p(s \tau) ^{p-1}} + \mathscr{F}(\p)+ \varepsilon \int_\Omega \p \log \p \diff x.
\end{align*}
We know $\p^\varepsilon$ are Lipschitz and weakly converge to $\Hat{\p}^\tau _t$,  and with the same computations as above we can deduce, using that $\varepsilon\log(\p^\varepsilon)$ is a increasing function of $f'(\p^\varepsilon)$,

\begin{align*}
    \frac{1}{q} \frac{W^p _p (\p^\varepsilon, \p_k ^\tau)}{(s\tau)^{p}} = \frac{1}{q} \int_\Omega \left| \nabla \left( f'(\varrho^\varepsilon) +\varepsilon \log(\p^\varepsilon \right) )\right|^q \diff \varrho^\varepsilon \geq \frac{1}{q} \int_\Omega \left| \frac{ \nabla L_f (\varrho^\varepsilon)}{\p^\varepsilon}  \right|^q \diff \varrho^\varepsilon. 
\end{align*}
Since weak convergence of measures implies the convergence in Wasserstein distance, we have, using the lower semi-continuity of the slope proved in the following lemma :
\begin{align*}
    \frac{1}{q} \frac{W^p _p (\Hat{\p}_t ^\tau, \p_k ^\tau)}{(s\tau)^{p}} \geq \liminf_\varepsilon \frac{1}{q} \int_\Omega \left| \frac{ \nabla L_f (\varrho^\varepsilon)}{\p^\varepsilon}  \right|^q \diff \varrho^\varepsilon \geq  \frac{1}{q} \int_\Omega \left| \frac{ \nabla L_f (\Hat{\p}^\tau _t)}{\Hat{\p}^\tau _t}  \right|^q \diff \Hat{\p}^\tau _t
\end{align*}

\begin{lemma}   
\label{SlopeLSC}
The functional 
\begin{equation*}
    \p \mapsto \frac{1}{q} \int_\Omega \left| \frac{\nabla L_f (\p)}{\p} \right|^q \diff \varrho = \mathcal{B}_q (\varrho, \nabla L_f (\varrho))
\end{equation*}
is lower semi-continuous with respect to the weak $L^1$ convergence.
\end{lemma}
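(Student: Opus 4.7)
The plan is to recognize that the functional in question equals $\mathcal{B}_q(\p, \nabla L_f(\p)\diff x)$ by \cref{BenamouBrenier}, and to exploit the weak lower semi-continuity of $\mathcal{B}_q$ on pairs of bounded measures already established there. Given a sequence $\p_n \rightharpoonup \p$ weakly in $L^1(\Omega)$, I would first assume without loss of generality that $\liminf_n F(\p_n)$ is finite, extract a subsequence along which $F(\p_n)\leq C$, and write $E_n := \nabla L_f(\p_n)\diff x$. Hölder's inequality immediately yields
\[
|E_n|(\Omega) = \int_\Omega \left|\frac{\nabla L_f(\p_n)}{\p_n}\right|\diff \p_n \leq (qC)^{1/q}\left(\int_\Omega \p_n\right)^{1/p}\leq (qC)^{1/q},
\]
so that $(E_n)$ is bounded in $\mathcal{M}(\Omega)^d$ and, up to a further extraction, $E_n \rightharpoonup^* E$ weakly as vector measures. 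The lower semi-continuity of $\mathcal{B}_q$ from \cref{BenamouBrenier} then gives $\mathcal{B}_q(\p, E) \leq \liminf_n F(\p_n)$, and the remaining task is to identify $E$ with $\nabla L_f(\p)\diff x$, after which the same proposition converts $\mathcal{B}_q(\p, \nabla L_f(\p)\diff x)$ back into $\frac{1}{q}\int|\nabla L_f(\p)/\p|^q \diff \p$ and the statement follows.

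The hard part is precisely this identification, since weak $L^1$ convergence of $\p_n$ does not transfer directly to the nonlinear quantity $L_f(\p_n)$. To handle it, I would normalize $f$ by an additive constant so that $L_f \geq 0$, noting that $L_f'(z) = zf''(z) \geq 0$ together with $L_f(0)=0$ makes $L_f$ continuous and strictly increasing on $[0,+\infty)$. The $L^1$-bound on $\nabla L_f(\p_n)$ supplied by Hölder, combined with the Poincaré-Wirtinger inequality in $BV(\Omega)$, shows that $L_f(\p_n) - c_n$ with $c_n := \fint_\Omega L_f(\p_n)$ is bounded in $BV(\Omega)$. Rellich compactness then produces a subsequence along which $L_f(\p_n) - c_n \to \zeta$ strongly in $L^1(\Omega)$ and almost everywhere. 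To show that the $c_n$ are themselves bounded, I distinguish two cases: if $L_f$ is bounded on $[0,+\infty)$, then so are the $c_n$ trivially; otherwise $L_f(+\infty)=+\infty$, and if $|c_n|\to+\infty$ along a sub-subsequence then $L_f(\p_n) \to \pm\infty$ a.e., which in turn forces $\p_n\to +\infty$ a.e.\ and contradicts $\int_\Omega \p_n \leq 1$ by Fatou.

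After one further extraction I may therefore take $c_n \to c \in \R$, whence $L_f(\p_n) \to \zeta + c$ a.e.\ and, by continuity of $L_f^{-1}$, $\p_n \to L_f^{-1}(\zeta+c)$ a.e. Equi-integrability of $(\p_n)$ (Dunford--Pettis, since $\p_n\rightharpoonup\p$ in $L^1$) combined with Vitali's theorem upgrades this to $\p_n \to L_f^{-1}(\zeta+c)$ strongly in $L^1$, and uniqueness of the weak $L^1$ limit forces $L_f^{-1}(\zeta+c) = \p$ a.e., i.e.\ $\zeta = L_f(\p) - c$. Testing $E$ against any $\phi \in C_c^\infty(\Omega;\R^d)$ and using $\int_\Omega \nabla\cdot\phi\diff x = 0$, I then obtain
\[
\int_\Omega \phi \cdot dE = -\lim_n \int_\Omega (L_f(\p_n) - c_n)(\nabla\!\cdot\phi)\diff x = -\int_\Omega \zeta\,(\nabla\!\cdot\phi)\diff x = \int_\Omega \phi \cdot \nabla L_f(\p)\diff x,
\]
so $E = \nabla L_f(\p)\diff x$ as vector distributions on $\Omega$. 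The main obstacle is therefore Step~2: passing from weak $L^1$ convergence of $\p_n$ to weak-$*$ convergence of the gradient of the nonlinear, potentially unbounded function $L_f(\p_n)$, which is resolved by combining the $BV$-bound with equi-integrability through Vitali.
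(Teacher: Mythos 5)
Your proof is correct and takes essentially the same route as the paper: Hölder gives a uniform $L^1$ bound on $\nabla L_f(\p_n)$, BV compactness yields a.e.\ convergence of $L_f(\p_n)$ up to additive constants which are then shown to be bounded, the limit is identified with $L_f(\p)$ using the weak $L^1$ convergence of $\p_n$, and lower semi-continuity of $\mathcal{B}_q$ concludes. The only cosmetic difference is that you invoke Dunford--Pettis and Vitali where the paper packages the same identification step as a separate lemma (\cref{lemmeEgorov}) proved via Egorov's theorem.
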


\begin{proof}
Let $\p_n$ be a sequence of probability measures weakly converging in $L^1$ to some probability measure $\p$. We can assume that $\mathcal{B}_q (\varrho_n, \nabla L_f (\varrho_n))$ is bounded and from H\"older's inequality and $\frac{q}{p}=q-1$ we get the following basic estimate :
\begin{align*}
    \int_\Omega \left| \nabla L_f (\p_n) \right| \diff x = \int_\Omega  \frac{\left|\nabla L_f( \p_n)\right|}{\p_n ^{1/p}} \p_n ^{1/p} \diff x &\leq  \left( \int_\Omega  \frac{\left|\nabla L_f( \p_n)\right|^q}{\p_n ^{q/p}} \diff x\right)^\frac{1}{q}\left(\int_\Omega \p_n (x) \diff x \right)^\frac{1}{p} \\
    &\leq C\mathcal{B}_q (\varrho_n, \nabla L_f (\varrho_n))^\frac{1}{q}.
\end{align*}
Using this bound we can therefore deduce that $L_f(\p_n) - C_n$ is bounded in $L^1$ for some constants $C_n$, and using the compactness embedding of $BV$ in $L^1$ we can deduce that up to subsequences, we have, for some $u\in BV(\Omega)$
\begin{align*}
    L_f(\p_n)-C_n &\xrightarrow{} u \textnormal{ almost everywhere and in $L^1$,} \\
    \nabla L_f(\p_n) &\xrightarrow{} \diff u \textnormal{ weakly as measures }.
\end{align*}
This strong convergence implies that $C_n$ is actually bounded. Indeed, if for some further subsequence we had $C_n \to \infty$, this would imply that $L_f(\p_n) \to \infty$ a.e.\ which in turn implies that $\p_n \to \infty$ a.e.\ because $L_f$ is strictly increasing. Similarily, if $C_n \to -\infty$ for some subsequence, we would have $L_f(\p_n) \to -\infty$ a.e.\ and $\p_n \to 0$ a.e. Both situations are impossible in light of \cref{lemmeEgorov} below, so that we can assume that $(C_n)_n$ is bounded. Up to changing $u$ by a constant, we can therefore assume that $L_f(\p_n) \xrightarrow{} u$ a.e. and in $L^1$, which implies that $\p_n \xrightarrow{} (L_f)^{-1}(u)$ a.e. Again using \cref{lemmeEgorov}, the weak convergence $\p_n \to \p $ then gives $u=L_f(\p)$ a.e. Finally, using the lower semicontinuity of $\mathcal{B}_q$, we find that $\mathcal{B}_q (\p,\diff u) < \infty$, so that by using \cref{BenamouBrenier}, we get that $\diff u$ has a density with respect to $\p$ and therefore also with respect to the Lebesgue measure, which is $\nabla L_f (\p)$ and
\begin{align*}
     \frac{1}{q} \int_\Omega \left| \frac{\nabla L_f (\p)}{\p} \right|^q \diff \varrho = \mathcal{B}_q (\varrho, \nabla L_f (\varrho)) &\leq \liminf_n\mathcal{B}_q (\varrho_n, \nabla L_f (\varrho_n)) \\
     &= \liminf_n \frac{1}{q} \int_\Omega \left| \frac{\nabla L_f (\p_n)}{\p_n} \right|^q \diff \varrho_n
\end{align*}
\end{proof}

\begin{lemma}
\label{lemmeEgorov}
    Let $(u_n)_n \in L^1 (\Omega)$ be a sequence of functions that weakly converges to some $u$ in $L^1(\Omega)$, such that $u_n$ converges pointwise to some measurable function $v$ almost everywhere on $A\subset \Omega$, then $u=v$ almost everywhere on $A$. 
\end{lemma}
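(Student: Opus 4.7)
The plan is to reduce to a setting where Egorov's theorem applies and then combine uniform convergence with the definition of weak $L^1$ convergence. Since $v$ is measurable, one can write $A = \bigcup_{k\in\N} A_k$ where $A_k := \{x \in A : |v(x)| \leq k\}$, and it is enough to prove $u=v$ a.e.\ on each $A_k$.

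Each $A_k$ has finite Lebesgue measure (since $\Omega$ is bounded), so Egorov's theorem applies: for every $\delta > 0$ there exists a measurable subset $B_\delta \subset A_k$ with $|A_k \setminus B_\delta| < \delta$ on which $u_n \to v$ uniformly. On $B_\delta$ the limit $v$ is bounded by $k$, hence in particular in $L^\infty(B_\delta) \subset L^1(B_\delta)$.

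Now fix any measurable $E \subset B_\delta$. On one hand, by the uniform convergence on $B_\delta$ we have $\int_E u_n \diff x \to \int_E v \diff x$. On the other hand, since $\mathbf{1}_E \in L^\infty(\Omega)$ is an admissible test function for the weak $L^1$ convergence, $\int_E u_n \diff x \to \int_E u \diff x$. Equating the two limits gives $\int_E u \diff x = \int_E v \diff x$ for every measurable $E \subset B_\delta$, and the standard argument (apply to $E = B_\delta \cap \{u > v\}$ and $E = B_\delta \cap \{u < v\}$) yields $u = v$ a.e.\ on $B_\delta$.

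Finally, applying this with $\delta = 1/m$ and taking the union over $m$ covers $A_k$ up to a null set, and taking the union over $k$ covers $A$. There is no real obstacle: the only point requiring care is the mild preliminary reduction to $\{|v|\leq k\}$, which is needed because Egorov's theorem (and the integrability of $v$ on the set of uniform convergence) requires a finite-measure host set and an a.e.\ finite limit.
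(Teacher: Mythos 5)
Your proof is correct and follows the same overall Egorov-plus-weak-convergence strategy as the paper, but the implementation differs in a way worth noting. The paper tests the weak $L^1$ convergence against the function $(u-v)\mathbf{1}_{A_\varepsilon}$, which is a valid $L^\infty$ test function only if $u-v$ is bounded on the Egorov set $A_\varepsilon$; since $u$ and $v$ are only known to be in $L^1$, this is not automatic, and the paper's proof as written tacitly relies on an additional restriction of the Egorov set (to where $u$ and $v$ are bounded) that is never spelled out. You avoid this issue entirely by testing against indicator functions $\mathbf{1}_E$, which are always in $L^\infty$, and deducing $\int_E u = \int_E v$ for all measurable $E \subset B_\delta$; this is both correct and cleaner. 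Your preliminary decomposition $A = \bigcup_k A_k$ with $A_k = \{|v|\leq k\}$ plays the same role as the paper's Fatou step (ensuring $v$ is finite a.e.\ so Egorov applies, and that $\int_E v$ is well defined), and is slightly more than strictly necessary — on the Egorov set $v$ would anyway be integrable as a uniform limit of $L^1$ functions — but it does no harm and makes the argument self-contained. In short: same skeleton, but your choice of test function is the more robust one.
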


\begin{proof}
First by Fatou's Lemma, we find 
\begin{equation*}
    \int_A |v| \diff x \leq \|u\|_{L^1 (A)},
\end{equation*}
so that $v$ is finite almost everywhere.
    By Egorov's theorem, for $\varepsilon >0$, there exists some $A_\varepsilon \subset A$ such that $|A\setminus A_\varepsilon | \leq \varepsilon$ and $u_n$ converges to $g$ uniformly on $A_\varepsilon$. By the weak convergence, we have
    \begin{align*}
        \int_{A_\varepsilon} u_n (u-v) \diff x \xrightarrow{} \int_{A_\varepsilon} u (u-v) \diff x.
    \end{align*}
    By the uniform convergence, we find 
    \begin{equation*}
        \int_{A_\varepsilon} u_n (u-v) \diff x \xrightarrow{}\int_{A_\varepsilon} v (u-v) \diff x,
    \end{equation*}
so that we have 
\begin{equation*}
    \int_{A_\varepsilon} (u-v)^2 \diff x =0,
\end{equation*}
and $u=v$ almost everywhere on $A_\varepsilon$. Taking $\varepsilon \to 0$, we find $f=g$ almost everywhere on $A$.
\end{proof}
We can therefore exchange the slope term in the previous partial EDI and obtain
\begin{align*}
    \mathscr{F}(\p_k ^\tau) \geq \mathscr{F}(\p_{k+1}^\tau) + \frac{1}{q} \int_{k\tau} ^{(k+1)\tau }\int_\Omega \left| \frac{\nabla L_f(\Hat{\p}_t ^\tau)}{\Hat{\p}_t ^\tau} \right|^q \diff \Hat{\p}_t ^\tau \diff t + \frac{1}{p} \int_{k\tau} ^{(k+1)\tau } \int_\Omega \left| \Tilde{v}^\tau _ t \right|^p \diff \Tilde{\p}_t ^\tau \diff t .
\end{align*}
Finally, summing on $k$ we get our EDI formulation :
\begin{align}
     \mathscr{F}(\p_0) \geq \mathscr{F}(\p_{T}^\tau) + \frac{1}{q} \int_{0} ^{T }\int_\Omega \left| \frac{\nabla L_f(\Hat{\p}_t ^\tau)}{\Hat{\p}_t ^\tau} \right|^q \diff \Hat{\p}_t ^\tau \diff t + \frac{1}{p} \int_{0} ^{T } \int_\Omega \left| \Tilde{v}^\tau _ t \right|^p \diff \Tilde{\p}_t ^\tau \diff t  ,
\end{align}
which is the desired inequality.
\subsection{Compactness estimates and passing to the limit}
\label{compactness_section}

From our initial JKO scheme, comparing the value of the optimal minimizer $\p_{k+1} ^\tau$ with that of the current step $\p_k ^\tau $ we have
\begin{align*}
    \mathscr{F}(\p_{k+1} ^\tau) + \frac{W^p _p (\p_{k+1} ^\tau, \p_{k} ^\tau)}{p \tau^{p-1}} \leq \mathscr{F}(\p_{k} ^\tau),
\end{align*}
and summing these inequalities yields 
\begin{align*}
    \sum_k  \frac{W^p _p (\p_{k+1} ^\tau, \p_{k} ^\tau)}{p \tau^{p-1}} \leq \mathscr{F}(\p_0) - \inf \mathscr{F} < \infty.
\end{align*}
The above estimate alows us to derive uniform continuity estimates for our interpolations, we skip the computations which are classical and can be found for example in \cite{AGS,OTAM} : if $s<t$ we have
\begin{align*}
    W_p ( \Tilde{\p}_s ^\tau , \Tilde{\p}_t ^\tau) &\leq C (t-s)^\frac{1}{q} \\
    W_p( \Bar{\p}_t ^\tau, \Tilde{\p}_t ^\tau ) &\leq C \tau^\frac{1}{q} \\
    W_p( \Hat{\p}_t ^\tau, \Tilde{\p}_t ^\tau) &\leq C \tau ^\frac{1}{q}.
\end{align*}
Using the Arzel\`a-Ascoli theorem we can deduce that up to a subsequence which we do not relabel, when $\tau \to 0$, all interpolated curves uniformly converge to some $\p \in C^\frac{1}{q} \left([0,T], \mathbb{W}_p(\Omega) \right)$

We now wish to prove compactness for the momentum variable, to pass to the limit in our EDI formulation. To this aim, we can notice that using the Jensen inequality,

\begin{align*}
    \left| \Tilde{E}_t ^\tau \right|(\Omega) &= \int_\Omega \left| \Tilde{v}_t ^\tau \right| \diff \Tilde{\p}_t ^\tau  \leq \left(\int_\Omega \left| \Tilde{v}_t ^\tau \right|^p \diff \Tilde{\p}_t ^\tau \right) ^\frac{1}{p} = \lVert \Tilde{v}_t ^\tau \rVert_{L^p(\Tilde{\p}_t ^\tau)}
\end{align*}
and, adding the time variable,
\begin{align*}
    \left| \Tilde{E} ^\tau \right|\left([0,T] \times \Omega\right) \leq \int_0 ^T \lVert \Tilde{v}_t ^\tau \rVert_{L^p(\Tilde{\p}_t ^\tau)} \diff t \leq T^\frac{1}{q} \left( \int_0 ^T \lVert \Tilde{v}_t ^\tau \rVert_{L^p(\Tilde{\p}_t ^\tau)} ^p \diff t \right) ^\frac{1}{p} \leq C T^\frac{1}{q}
\end{align*}
We can therefore assume that as $\tau \to 0$ and for a subsequence which we do not relabel, $\Tilde{E} ^\tau$ weakly converges as a measure to some measure $E$.

Using the $\mathcal{B}_p$ functional which is lower semi-continuous for the weak convergence of measures, we therefore have that there exists $v\in L^p ([0,T] \times \Omega, \diff \varrho_t \diff t)$ such that $E= \varrho v $ and 

\begin{align*}
    \liminf_\tau \mathcal{B}_p (\Tilde{\p}^\tau, \Tilde{E}^\tau) \geq \mathcal{B}_p (\p,E) = \frac{1}{p} \int_0 ^T \int_\Omega |v_t|^p \diff \varrho_t \diff t 
\end{align*}
Moreover the continuity equation which is linear in $(\p, E)$ readily passes to the limit and we have 
\begin{equation*}
    \partial_t \p + \nabla \cdot \left( \p v \right) =0
\end{equation*}
Up to changing $v$ we can also assume it is the (unique) velocity vector field  of minimal $L^p$ norm.
For the slope term, we only have to use \cref{SlopeLSC} and Fatou's Lemma to get
\begin{align*}
    \liminf_\tau \frac{1}{q} \int_{0} ^{T }\int_\Omega \left| \frac{\nabla L_f(\Hat{\p}_t ^\tau)}{\Hat{\p}_t ^\tau} \right|^q \diff \Hat{\p}_t ^\tau \diff t
    &\geq  \int_0 ^T \liminf_\tau \mathcal{B}_p (\Hat{\p}^\tau _t ,\nabla L_f(\Hat{\p}_t ^\tau)) \diff t \\
    &\geq  \frac{1}{q} \int_0 ^T \int_\Omega \left| \frac{\nabla L_f (\p_t)}{\p_t} \right|^q \diff \varrho_t \diff t.
\end{align*}
Therefore, passing to the limit in \cref{EDIprecursor} gives
\begin{align}
\label{EDI}
     \mathscr{F}(\p_0) \geq \mathscr{F}(\p_{T}) + \frac{1}{q} \int_{0} ^{T }\int_\Omega \left| \frac{\nabla L_f(\p_t )}{\p_t } \right|^q \diff \p_t \diff t + \frac{1}{p} \int_{0} ^{T } \int_\Omega \left| v _ t \right|^p \diff \p_t \diff t  .
\end{align}
\begin{remark}
    The above derivation and estimates are not surprising from the point of view of the metric theory developed in \cite{AGS}. Indeed, the lower semicontinuity from \cref{SlopeLSC} shows that we have 
    \begin{equation*}
       \int_\Omega \left| \frac{\nabla L_f(\varrho)}{\varrho} \right|^q \varrho \, dx \leq |\partial^- \mathscr{F}|^q (\varrho),
    \end{equation*}
    where $|\partial^- \mathscr{F}|$ denotes the relaxed metric slope associated with $\mathscr{F}$, and the well known EDI condition can be obtained by following \cite{AGS}:
    \begin{align*}
         \mathscr{F}(\p_0) \geq \mathscr{F}(\p_{T}) + \frac{1}{q} \int_{0} ^{T } |\partial^- \mathscr{F}|^q (\varrho) \diff t + \frac{1}{p} \int_{0} ^{T } \left| (\p_t)'\right|^p  \diff t  .
    \end{align*}
\end{remark}

\subsection{Estimates on the limit curve} \label{SectionEstimates}

\begin{lemma}
\label{superlinlemma}
    There exists $\Phi: \R_+ \to \R_+$, smooth, strictly convex and superlinear such that 
    \begin{align*}
        \sup_{t\in [0,T]} \int_{\Omega} \Phi(\p_t) \diff x <\infty 
    \end{align*}
\end{lemma}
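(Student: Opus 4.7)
The approach relies on the Flow Interchange Technique from \cref{flowinterchange}: any function $\Phi$ satisfying (\textbf{H1-H2-H3-H5}) induces a functional $\mathscr{H}$ whose value decreases along the $p$-JKO scheme, provided one starts with $\mathscr{H}(\p_0) < \infty$. The plan is to build a single such $\Phi$ that is superlinear and integrable against $\p_0$, then propagate the resulting bound both along the discrete scheme and into the limit curve.

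First, since $\p_0 \in L^1(\Omega)$, the de la Vallée Poussin criterion yields a superlinear convex function $\varphi: \R_+ \to \R_+$ with $\int_\Omega \varphi(\p_0) \diff x < \infty$. Up to subtracting an affine function and truncating, one may assume $\varphi$ vanishes on a neighborhood of $0$. Applying \cref{constructsuperlin} then produces a smooth strictly convex superlinear function $\Phi$ satisfying McCann's condition (\textbf{H5}) together with a constant $C>0$ such that $\Phi \leq C(\varphi + 1)$. In particular $\mathscr{H}(\p_0) \leq C\int_\Omega (\varphi(\p_0)+1) \diff x <\infty$, which is the base case.

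Next, I would propagate this bound along the scheme by induction on $k$. At the $(k+1)$-th step, let $g = \p_k^\tau$ and let $\p_\varepsilon$ denote the solution of the approximating problem \cref{approxpb}. Applying \cref{flowinterchange} with $h=\Phi$ (which is admissible since it satisfies (\textbf{H1-H2-H3-H5})) yields $\mathscr{H}(\p_\varepsilon) \leq \mathscr{H}(g)$. By \cref{convergencesolapprox}, $\p_\varepsilon \rightharpoonup \p_{k+1}^\tau$ weakly as measures when $\varepsilon \to 0$, and the lower semicontinuity of $\mathscr{H}$ for this convergence (recalled in \cref{deffunctional}) gives $\mathscr{H}(\p_{k+1}^\tau) \leq \mathscr{H}(\p_k^\tau)$. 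By iteration, $\mathscr{H}(\p_k^\tau) \leq \mathscr{H}(\p_0)$ uniformly in $k$ and $\tau$.

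Finally, pass to the limit $\tau \to 0$. From the uniform $W_p$-continuity established in \cref{compactness_section}, the piecewise constant interpolation satisfies $\bar{\p}_t^\tau \rightharpoonup \p_t$ weakly for every $t\in[0,T]$, and another application of lower semicontinuity yields $\mathscr{H}(\p_t) \leq \liminf_\tau \mathscr{H}(\bar{\p}_t^\tau) \leq \mathscr{H}(\p_0)$. Since $\Phi$ is superlinear, the recession constant $L$ in \cref{deffunctional} is $+\infty$, so finiteness of $\mathscr{H}(\p_t)$ already forces the singular part of $\p_t$ to vanish, and therefore $\int_\Omega \Phi(\p_t) \diff x = \mathscr{H}(\p_t) \leq \mathscr{H}(\p_0)$ uniformly in $t$, giving the desired conclusion. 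The main delicate point is simply to secure the base case $\mathscr{H}(\p_0)<\infty$ under no assumption beyond $\p_0 \in L^1$, which is exactly what the combination of de la Vallée Poussin and \cref{constructsuperlin} provides; once this is in place, the rest is bookkeeping of lower semicontinuity and monotonicity.
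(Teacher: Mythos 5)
Your proof is correct and follows essentially the same route as the paper: Dunford--Pettis (de la Vall\'ee Poussin) to obtain $\varphi$, \cref{constructsuperlin} to upgrade it to a smooth $\Phi$ satisfying \textbf{(H5)}, \cref{flowinterchange} plus lower semicontinuity to propagate the bound along the scheme and to the limit curve. Your added remark that the superlinear recession makes the singular part of $\p_t$ vanish is a point the paper leaves implicit, but the argument is the same.
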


\begin{proof}
    We know by Dunford-Pettis' theorem that since $\p_0\in L^1 (\Omega)$, there exists some superlinear convex function $\varphi : \R_+ \to \R_+$, which we can assume to be $0$ in a neighborhood of $0$ such that 
    \begin{equation*}
        \int_\Omega \varphi(\p_0) \diff x < \infty 
    \end{equation*}
    We take $\Phi$ to be the one constructed in \cref{constructsuperlin} from $\varphi$. It is smooth and satisfies McCann's condition (\textbf{H5}) so that we can use it in \cref{flowinterchange} : at the $k$-th step of the JKO scheme, we have, for the approximating problem,
    \begin{align*}
        \int_\Omega \Phi(\p_k ^\tau) \diff x \geq \int_\Omega \Phi(\p_\varepsilon) \diff x ,
    \end{align*}
    and taking $\varepsilon \to 0$, we have $\p_\varepsilon \to \p_{k+1} ^\tau$. By lower semi continuity of the functional, we get
    \begin{align*}
         \int_\Omega \Phi(\p_k ^\tau) \diff x \geq \int_\Omega \Phi(\p_{k+1} ^\tau) \diff x,
    \end{align*}
    so that by induction we have, for all $k$,
    \begin{align*}
        \int_\Omega \Phi(\p_k ^\tau) \diff x \leq  \int_\Omega \Phi(\p_0 ) \diff x \leq C \left( \int_\Omega \varphi(\p_0) +1  \diff x \right) <  \infty.
    \end{align*}
    Passing to the limit $\tau \to 0$, and again using the lower semi continuity of the functional, we can deduce that for all $t\geq 0$
    \begin{align*}
        \int_\Omega \Phi(\p_t) \diff x \leq  \int_\Omega \Phi(\p_0 ) \diff x,
    \end{align*}
    and 
    \begin{align*}
         \sup_{t\in [0,T]} \int_{\Omega} \Phi(\p_t) \diff x  \leq \int_{\Omega} \Phi(\p_0) \diff x  <\infty 
    \end{align*}
\end{proof}

\begin{lemma}
\label{integrabilitylemma}
If we have, for $\alpha \geq 1 - \frac{1}{d}$ :
    \begin{align}
    \begin{cases}
        \begin{aligned}
            &\int_\Omega \p_0 \log \p_0 \diff x &< + \infty &\quad\text{if } \alpha = 1, \\[5pt]
            &\int_\Omega \p_0^\alpha \diff x &< + \infty &\quad\text{if } \alpha > 1,
        \end{aligned}
    \end{cases}
\end{align}
then
    \begin{align*}
        \int_{\Omega_T} |\nabla h(\p_t)|^q \diff x \diff t < \infty,
    \end{align*}
 where $h$ is any function satisfying $h'(z)=z^{\frac{\alpha -1}{q}} f''(z)^\frac{1}{p}$, and $\p_t$ is any limit of our $p$-JKO scheme $\cref{jkoscheme}$.
\end{lemma}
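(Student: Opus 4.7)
The plan is to apply the Flow Interchange Technique (\cref{flowinterchange}) at each step of the $p$-JKO scheme, with an auxiliary convex integrand $\hat h$ chosen so that the dissipation produced on the right-hand side is exactly $\tau\int_\Omega|\nabla h(\p)|^q\diff x$. Summing in $k$ and sending $\tau\to 0$ by lower semicontinuity will then yield the desired space-time integrability on the limit curve.

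The natural choice is
\[
\hat h(z)=\begin{cases}\dfrac{z^\alpha}{\alpha(\alpha-1)}&\text{if }\alpha\neq 1,\\[4pt]z\log z&\text{if }\alpha=1,\end{cases}
\]
for which $z\hat h''(z)=z^{\alpha-1}$; a direct calculation then yields $\nabla L_{\hat h}(\p)\cdot(\nabla f'(\p))^{q-1}=\p^{\alpha-1}f''(\p)^{q-1}|\nabla\p|^q$, which equals $|\nabla h(\p)|^q$ by the very definition of $h$ (using $q/p=q-1$). I would then check McCann's condition (\textbf{H5}) for $\hat h$: for $\alpha\neq 1$, $s\mapsto s^d\hat h(s^{-d})=s^{d(1-\alpha)}/(\alpha(\alpha-1))$, and the assumption $\alpha\geq 1-\tfrac{1}{d}$ is precisely what makes this function convex and non-increasing across the three regimes $\alpha>1$, $\alpha=1$, and $1-\tfrac{1}{d}\leq\alpha<1$. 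The initial value $\int_\Omega\hat h(\p_0)\diff x$ is finite by the $L^\alpha$ assumption when $\alpha>1$, by the entropy assumption when $\alpha=1$, and automatically via Jensen's inequality on the bounded domain $\Omega$ when $\alpha<1$; in all three cases $\int_\Omega\hat h(\p)\diff x$ is also uniformly bounded below on $\prob(\Omega)$.

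Applying \cref{flowinterchange} to the approximated problem and using $f_\varepsilon''\geq f''$ gives $\int_\Omega\hat h(\p_k^\tau)\diff x-\int_\Omega\hat h(\p_\varepsilon)\diff x\geq\tau\int_\Omega|\nabla h(\p_\varepsilon)|^q\diff x$. Passing $\varepsilon\to 0$ via \cref{convergencesolapprox}, with standard lower semicontinuity of the $\hat h$-energy on the left and a \cref{SlopeLSC}-style lower semicontinuity argument on the right, yields the per-step estimate with $\p_{k+1}^\tau$ in place of $\p_\varepsilon$. Telescoping over $k\in\{0,\dots,N-1\}$ with $N\tau=T$ and invoking the uniform lower bound on the $\hat h$-energy produces $\int_0^T\int_\Omega|\nabla h(\bar\p_t^\tau)|^q\diff x\diff t\leq C$ uniformly in $\tau$; a final lower-semicontinuity pass, combined with Fatou's lemma in $t$, transfers this bound to the limit curve $\p_t$. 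The main difficulty I foresee is the case-by-case verification of McCann's condition for $\hat h$, which is where the critical threshold $\alpha\geq 1-1/d$ really enters; the sub-unit regime $1-1/d\leq\alpha<1$ (in which $\hat h$ is negative and we have no explicit integrability assumption on $\p_0$) is the most delicate one. Everything else is a routine deployment of the lower-semicontinuity machinery already developed earlier in the paper.
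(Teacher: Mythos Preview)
Your proposal is correct and follows essentially the same approach as the paper: apply \cref{flowinterchange} with the auxiliary function $z\mapsto \frac{1}{\alpha-1}z^\alpha$ (resp.\ $z\log z$ when $\alpha=1$), identify the dissipation as $|\nabla h(\p_\varepsilon)|^q$, pass $\varepsilon\to 0$ via a \cref{SlopeLSC}-type argument, telescope over the JKO steps, and conclude by Fatou as $\tau\to 0$. You are in fact slightly more explicit than the paper on two points it leaves implicit: the use of $f_\varepsilon''\geq f''$ to go from the flow-interchange output to an inequality involving only $f''$, and the verification that the threshold $\alpha\geq 1-\tfrac{1}{d}$ is exactly McCann's condition for the power integrand (including the sub-unit regime, where finiteness and the lower bound on the $\hat h$-energy come for free by Jensen).
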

\begin{proof}
    We shall use the flow interchange technique, which allows us to gain some first order regularity from the integrability assumption of the initial condition. First we will look at the case where $\alpha \\neq 1$. Starting from the $k$-th step of our JKO scheme, we look at the $\varepsilon$-approximating problem and apply \cref{flowinterchange} with the function $z \mapsto \frac{1}{\alpha -1}z^\alpha$ and get
    \begin{align*}
       \frac{1}{\alpha -1} \int_\Omega (\p^\tau _k) ^\alpha - \frac{1}{\alpha -1} \int_\Omega \p _\varepsilon ^\alpha \geq \tau \int_\Omega \p _\varepsilon ^{\alpha-1}f''(\p_\varepsilon)^{q-1} \left| \nabla \p_\varepsilon \right|^q   \diff x \geq 0,
    \end{align*}
    where $\p_\varepsilon$ is the solution of \cref{approxpb} with $g=\p_k$. We can write $\p_\varepsilon ^{\alpha -1} f''(\p_\varepsilon)^{q-1} \left| \nabla \p_\varepsilon \right|^q = \left|\nabla h(\p_\varepsilon)\right|^q$, and $h$ being strictly increasing, when taking the limit $\varepsilon \to 0$ (and hence $\p_\varepsilon \to \p^\tau _{k+1}$), we can argue as in the proof of \cref{SlopeLSC} to find
    \begin{align*}
        \frac{1}{\alpha -1} \int_\Omega (\p^\tau _k) ^\alpha - \frac{1}{\alpha -1} \int_\Omega (\p^\tau _{k+1}) ^\alpha \geq  \tau \int_\Omega \left|\nabla h(\p^\tau _{k+1}) \right|^q \diff x.
    \end{align*}
    Summing on each step of the JKO scheme, and using Fatou's Lemma when passing to the limit $\tau \to 0$, we can obtain 
    \begin{align*}
       \frac{1}{\alpha -1} \int_\Omega \p_0 ^\alpha \geq \int_{\Omega_T} \left|\nabla h(\p) \right|^q \diff x \diff t + \frac{1}{\alpha -1} \int_\Omega \p _T ^\alpha \diff x.
    \end{align*}

For the case $\alpha=1$, we use the same techniques as above, but applying \cref{flowinterchange} to the function $z \mapsto z \log z$ which satisfies McCann's condition and, in terms of second derivatives, behaves in a similar manner to power functions, so that all computations are the same.
\end{proof}

\section[Derivative along the flow]{Derivative of $\mathscr{F}$ along the flow}
In this section, we will endeavor to compute the derivative of $\mathscr{F}$ along the curve given by $(\p,v)$. Formally, using the continuity equation, we can write 
\begin{align*}
    \frac{\diff }{\diff t} \mathscr{F}(\p_t) = \int_\Omega \frac{\diff }{\diff t} f(\p_t) \diff x = \int_\Omega f'(\p_t) \partial_t \p_t \diff t =\int_\Omega \nabla f'(\p_t) \cdot v_t \diff \p_t,
\end{align*}
which gives
\begin{align*}
    \mathscr{F}(\p_T) - \mathscr{F}(\p_0) = \int _0 ^T \int_\Omega  \nabla f'(\p_t) \cdot v_t \diff \p_t \diff t.
\end{align*}
Now, using $\nabla L_f(\p_t) = \nabla f(\p_t) \p_t$ and the EDI formulation \cref{EDI},
\begin{align*}
    0\geq \int _0 ^T \int_\Omega  \nabla f'(\p_t) \cdot v_t \diff \p_t \diff t+  \frac{1}{q} \int_{0} ^{T }\int_\Omega \left| \nabla f'(\p_t) \right|^q \diff \p_t \diff t + \frac{1}{p} \int_{0} ^{T } \int_\Omega \left| v _ t \right|^p \diff \p_t \diff t,
\end{align*}
which, with Young's inequality gives

\begin{align*}
\begin{cases}
    \partial_t \p_t + \nabla \cdot \left( \p_t v_t\right)  = 0  \\ 
    v_t = - \left(\nabla f'(\p_t)\right)^{q-1}
\end{cases}
\end{align*}
In the rest of this section we shall give a rigorous proof of the above result. 
First we can notice that for our purposes, it is enough to prove that 
\begin{align*}
    \mathscr{F}(\p_T) - \mathscr{F}(\p_0) \geq \int _0 ^T \int_\Omega  \frac{\nabla L_f(\p_t)}{\p_t} \cdot v_t \diff \p_t \diff t.
\end{align*}

\begin{remark}
    The above statement could be proved in the case where $\delta \leq \p  \leq \frac{1}{\delta}$ for some $\delta>0$ (which is the case for example, when $\delta \leq \p_0  \leq \frac{1}{\delta}$ (see \cite{Agueh} or \cite{Otto})) by regularizing the couple $(\p,\p v)$ via convolution. The $L^\infty$ bound corresponds to the worst case $f'' >0$ in our theorem, and we have no need for lower bounds in the following proofs.
\end{remark}

\begin{theorem}[Chain Rule]
\label{ChainRule}
    Let $(\p_t)_{t\in [0,T]} \in \textnormal{AC}^p (\mathbb{W}_p (\Omega))$ with velocity vector field $(v_t)_{t\in [0,T]}$, so that it solves the continuity equation
    \begin{equation*}
        \partial_t \p_t + \nabla \cdot ( \p_t v_t) =0.
    \end{equation*}
    If $\p \in \mathcal{M}([0,T]\times \Omega)$ is absolutely continuous with respect to the Lebesgue measure, $L_f (\p_t) \in W^{1,1}(\Omega)$ for a.e.\ $t \in [0,T]$, and if
    \begin{align*}
        &\int_{\Omega_T} \left| \frac{\nabla L_f(\p_t) }{\p_t} \right|^q \diff \p_t \diff t < \infty,
    \end{align*}
    and, if $f$ does not satisfy McCann's condition,
    \begin{align*}
        &\int_{\Omega_T} |\nabla h(\p)|^q \diff x \diff t < +\infty,
    \end{align*}
    where $h$ is any function satisfying $h'(z)=z^{\frac{\alpha -1}{q}} f''(z)^\frac{1}{p}$, and $\alpha =2- q(1+\frac{1}{d})+ \theta (q-1)$, then
    \begin{align*}
        \mathscr{F} (\p_T) - \mathscr{F} (\p_0) = \int_{\Omega_T} \frac{\nabla L_{f}(\p)}{\p}v \diff \p \diff t.
    \end{align*}
\end{theorem}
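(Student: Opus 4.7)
The plan is to prove the chain rule first under the additional assumption that $f$ satisfies McCann's condition \textbf{(H5)}, so that $\mathscr{F}$ is geodesically convex, and then to extend to general $f$ by decomposing, after a suitable truncation, $f = \tilde f_1 - \tilde f_2$ into two pieces both satisfying \textbf{(H5)}. Since the time reversal $\sigma_t := \p_{T-t}$, $w_t := -v_{T-t}$ preserves all the integrability assumptions and swaps the sign of the right-hand side, it is enough to prove one inequality, say
\begin{equation*}
    \mathscr{F}(\p_T) - \mathscr{F}(\p_0) \geq \int_0^T \int_\Omega \frac{\nabla L_f(\p_t)}{\p_t}\cdot v_t \, \diff \p_t \, \diff t,
\end{equation*}
applying it also to $(\sigma,w)$ then yields equality.

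For the McCann case, the natural strategy is regularization by convolution. Pick a standard mollifier $\kappa^\eta$, set $\p^\eta_t := \p_t \ast \kappa^\eta$ and $E^\eta_t := (\p_t v_t) \ast \kappa^\eta$ (extending the data to a slightly enlarged domain using the convexity of $\Omega$), and observe that $\p^\eta$ is smooth and strictly positive, that $(\p^\eta, E^\eta)$ still solves the continuity equation, and that $\mathcal{B}_p(\p^\eta, E^\eta) \leq \mathcal{B}_p(\p,E)$ by convexity and Jensen. For the smooth curve the classical chain rule gives
\begin{equation*}
    \mathscr{F}(\p^\eta_T) - \mathscr{F}(\p^\eta_0) = \int_0^T \int_\Omega \frac{\nabla L_f(\p^\eta_t)}{\p^\eta_t} \cdot v^\eta_t \, \diff \p^\eta_t \, \diff t,
\end{equation*}
where $v^\eta_t := E^\eta_t / \p^\eta_t$. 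One then passes to the limit $\eta \to 0$: on the left side, Jensen's inequality coupled with convexity of $f$ gives $\mathscr{F}(\p^\eta_t) \leq \mathscr{F}(\p_t)$, while lower semicontinuity of $\mathscr{F}$ gives the opposite inequality in the $\liminf$, so the endpoint values converge; on the right side, the hypothesis $\int |\nabla L_f(\p)/\p|^q \, \diff\p\, \diff t < \infty$ combined with $v \in L^p(\diff\p\, \diff t)$ and lower semicontinuity of $\mathcal{B}_q$ (as in \cref{SlopeLSC}) lets us conclude via a weak/strong pairing argument in the spirit of \cref{BenamouBrenier}.

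For a general convex $f$ that does not satisfy \textbf{(H5)}, the delicate point is to write $f = \tilde f_1 - \tilde f_2$ with both $\tilde f_i$ satisfying \textbf{(H5)}, so that the previous step applies to each piece; a natural candidate for $\tilde f_2$ is a corrective convex function (constructed, up to approximation, via \cref{constructsuperlin}) whose growth is tied to the exponent $\theta$ controlling $f''$ at infinity, so that $\tilde f_1 = f + \tilde f_2$ becomes \textbf{(H5)}. This is where the additional integrability $\int |\nabla h(\p)|^q \, \diff x\, \diff t < \infty$, with $h'(z) = z^{(\alpha-1)/q} f''(z)^{1/p}$ and $\alpha = 2 - q(1+1/d) + \theta(q-1)$, becomes indispensable: it ensures that the slope associated with the polynomial correction $\tilde f_2$ has the right $L^q(\p)$ summability to allow the McCann-case chain rule to be applied to $\tilde f_1$ and $\tilde f_2$ separately, and then subtracted. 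The main obstacle is precisely this controlled decomposition: checking that $\alpha$ is exactly the exponent produced by $L_{\tilde f_2}$ when $\nabla L_{\tilde f_2}(\p)/\p$ is expanded in powers of $\p$ and $f''(\p)$, and that no residual term survives when the truncation level is sent to infinity and the convolution parameter $\eta$ is sent to zero.
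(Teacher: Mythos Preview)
Your overall architecture---reduce to one inequality by time reversal, treat the McCann case first, then decompose a general $f$ as a difference of two McCann functions after truncation---matches the paper's. The decomposition step and the role of the extra hypothesis $\int |\nabla h(\p)|^q<\infty$ in controlling the slope of the corrective piece $\tilde f_2$ are also correctly identified, though the actual construction of $\tilde f_2$ in the paper is not via \cref{constructsuperlin} but by applying the McCann operator $\mathcal{M}g(s)=s^dg(s^{-d})$ to the truncated $\tilde f$, taking the negative part of the second derivative, and inverting; this is what forces the growth $\tilde f_2''(z)\le Cz^{-(1+1/d)}$ and hence the specific value of $\alpha$.

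The genuine gap is in the McCann case itself. Your convolution argument breaks down at the passage to the limit on the right-hand side: the slope term $\nabla L_f(\p^\eta)$ is \emph{not} the convolution of $\nabla L_f(\p)$, because $L_f$ is nonlinear, so you have no a priori control on $\int |\nabla L_f(\p^\eta)/\p^\eta|^q\,\diff\p^\eta$. Unlike $\mathcal{B}_p(\p^\eta,E^\eta)$, which is dominated by $\mathcal{B}_p(\p,E)$ thanks to joint convexity and Jensen, there is no Jensen-type inequality for $\mathcal{B}_q(\p^\eta,\nabla L_f(\p^\eta))$ in terms of $\mathcal{B}_q(\p,\nabla L_f(\p))$, and lower semicontinuity goes the wrong way. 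The paper in fact remarks explicitly that convolution works only under two-sided $L^\infty$ bounds $\delta\le\p\le 1/\delta$, which are not assumed here.

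The paper's route for the McCann case is entirely different: it discretizes the time interval, uses geodesic convexity of $\mathscr{L}$ to get, at each step, $\mathscr{L}(\p_{t_{i+1}})-\mathscr{L}(\p_{t_i})\ge\int\frac{\nabla L_\ell(\p_{t_i})}{\p_{t_i}}\cdot(T_i-\mathrm{id})\,\diff\p_{t_i}$ with $T_i$ the optimal map, sums, and then passes to the limit as the mesh goes to zero. The point is that the slope is evaluated at the \emph{actual} densities $\p_{t_i}$, never at mollified ones, so the hypothesis $\int|\nabla L_f(\p)/\p|^q\,\diff\p\,\diff t<\infty$ is used directly; the work goes into showing strong $L^q$ convergence of the piecewise-constant slope and weak $L^p$ convergence of the discrete velocity, via a Dunford--Pettis argument and a strict-convexity trick to upgrade weak to a.e.\ convergence.
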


\begin{proof}

\textbf{Step 1 : Truncation and decomposition}\\
In this first step, we write $f$ as the difference of two functions $f_1,f_2$ which will satisfy McCann's condition, and for which computations are allowed. To be able to do this we first need to truncate $f$ linearly near $0$ and $+\infty$, and we will pass to the limit at the end to recover the initial function.

    First we linearly truncate $f$ near 0 and infinity and for ease of computations subtract a positive constant such that the truncated function is zero at zero.
    \begin{align*}
        \Tilde{f}(z)=\begin{cases}
            a_0 z &\textnormal{ for } z\leq z_0, \\
            f(z) + b_0 &\textnormal{ for } z_0 \leq z \leq z_1, \\
            a_1 z +b_1 &\textnormal{ for } z_1 \leq z,
        \end{cases}
    \end{align*}
with $a_0 = f'(z_0) \leq a_1 = f'(z_1)$, $b_0 = L_f(z_0)$, $b_1= L_f(z_0) - Lf(z_1)\leq 0$. 
We recall here the definition of the McCann operator $\mathcal{M}$ introduced previously : for $g: \R_+ \to \R$, we write 
\begin{align*}
    \mathcal{M}g (s) =s^d g(s^{-d}),
\end{align*}
and \begin{align*}
    \mathcal{M}^{-1}g(z) = g(z^{-\frac{1}{d}})z
\end{align*}
The McCann function associated to $\Bar{f}$ is
\begin{align*}
    \mathcal{M}\Tilde{f} (s) =
    \begin{cases}
            a_1 + s^d b_1 &\textnormal{ for } s\leq z_1^{-\frac{1}{d}}, \\
           \mathcal{M}f(s) + b_0 s^d &\textnormal{ for } z_1 ^{-\frac{1}{d}} \leq s \leq z_0 ^{-\frac{1}{d}}, \\
            a_0 &\textnormal{ for } z_0 ^{-\frac{1}{d}} \leq s.
        \end{cases}
\end{align*}
We take the second derivative which is 
\begin{align*}
    \left(\mathcal{M}\Tilde{f}\right)'' (s) =
    \begin{cases}
             b_1 (d-1)d s^{d-2} &\textnormal{ for } s\leq z_1^{-\frac{1}{d}}, \\
           \left(\mathcal{M}f\right)'' (s) + b_0 (d-1)d s^{d-2} &\textnormal{ for } z_1 ^{-\frac{1}{d}} \leq s \leq z_0 ^{-\frac{1}{d}}, \\
            0 &\textnormal{ for } z_0 ^{-\frac{1}{d}} \leq s.
        \end{cases}
\end{align*}
We now take $\Tilde{g_2} '' (s)=\left( \left(\mathcal{M}\Tilde{f}\right)'' (s)\right)_-$ and integrate twice so that, choosing the constants $c_0$ and $c_1$ we have
\begin{align*}
    \Tilde{g_2} (s) = \begin{cases}
        -b_1 s^d + c_1 s &\textnormal{ for } s\leq z_1^{-\frac{1}{d}}, \\
           \Theta(s) &\textnormal{ for } z_1 ^{-\frac{1}{d}} \leq s \leq z_0 ^{-\frac{1}{d}}, \\
            c_0 &\textnormal{ for } z_0 ^{-\frac{1}{d}} \leq s,
    \end{cases}
\end{align*}
where $\Theta$ is the double primitive of $\Tilde{g_2} (s)$ with suitable integration constants so that $\Tilde{g}_2$ is $C^2$ (except at $z_1 ^{-\frac{1}{d}}$ and $ z_0 ^{-\frac{1}{d}}$). $g_2$ is convex by construction, and also decreasing since it is constant at infinity.
And finally we can define
\begin{align*}
    \Tilde{f}_2 (z) = \mathcal{M}^{-1}\Tilde{g}_2  (z) =
    \begin{cases}
            c_0 z &\textnormal{ for } z\leq z_0, \\
            \mathcal{M}^{-1} \Theta (z) &\textnormal{ for } z_0 \leq z \leq z_1, \\
            c_1 z^{1- \frac{1}{d}} - b_1 &\textnormal{ for } z_1 \leq z,
        \end{cases}
\end{align*}
$\Tilde{f}_2$ is a $C^2$ (except at $z_0$ and $z_1$) convex function that is linear near the origin and that satisfies the McCann condition and
\begin{align*}
    \Tilde{f}_2 ''(z) &\leq C z^{-(1+\frac{1}{d})} \text{  when  } z \geq z_0.
\end{align*}
Now we set 
\begin{align*}
    \Tilde{f}_1 =\Tilde{f} + \Tilde{f}_2,
\end{align*}
which, by construction also satisfies McCann's condition ($\mathcal{M}\Tilde{f}_1$ is convex and constant for large enough values). 

\textbf{Step 2 : Getting regularity for the slope of $f_2$}\\
In this second step, we use the integrability assumption on $\nabla h(\p)$ to gain the necessary integrability on $\nabla L_{\Tilde{f}_2} (\p)$.

To obtain our needed regularity we will use a finite differences trick, indeed we can write, for $z \geq z_0$,
\begin{align*}
    \left(L_{\Tilde{f}_2} \right)'(z) = z\Tilde{f}_2 ''(z) \leq Cz^{-\frac{1}{d}} \leq C z ^{\frac{q-1}{q}} h'(z) z^{-\frac{1}{d} + \frac{q-1}{q}\theta - \frac{\alpha -1}{q} - \frac{q-1}{q}} = C z ^{\frac{q-1}{q}} h'(z),
\end{align*}
where we used the hypothesis $f''(r)\geq C r^{-\theta}$ and our condition $\alpha =2- q(1+\frac{1}{d})+ \theta (q-1)$ so that $-\frac{1}{d} + \frac{q-1}{q}\theta - \frac{\alpha -1}{q} - \frac{q-1}{q}=0$. Since $\Tilde{f}_2 '' (z)=0$ when $z<z_0$, the above inequality is also trivially true in this case.
We can therefore write, for $s>t$,
\begin{align*}
    L_{\Tilde{f}_2} (s) - L_{\Tilde{f}_2} (t) = \int_t ^s \left(L_{\Tilde{f}_2}\right)' (z) \diff z \leq C \int_t ^s z^{\frac{q-1}{q}} h'(z) \diff z 
\end{align*}
and 
\begin{align*}
    \left| L_{\Tilde{f}_2} (s) - L_{\Tilde{f}_2} (t) \right| \leq C \max (s,t)^\frac{q-1}{q} \left| h(s) - h(t) \right|
\end{align*}
Taking a test function $\varphi \in C_c ^\infty (\Omega)$ and $\delta \in \R^d$, we have

\begin{align*}
    \Big|\int_\Omega L_{\Tilde{f}_2} (\p(x)) & \nabla \varphi (x) \cdot \delta \diff x \Big|\\
    =& \left|\lim_{t\to 0} \int_\Omega L_{\Tilde{f}_2} (\p(x)) \frac{\varphi(x -t\delta) - \varphi(x)}{t} \diff x \right| \\
    =&\left| \lim_{t\to 0} \int_\Omega  \frac{L_{\Tilde{f}_2} (\p(x+t\delta)) - L_{\Tilde{f}_2} (\p(x))}{t} \varphi(x) \diff x \right|\\
     \leq& \limsup_{t\to 0} \int_\Omega \max ( \p(x+t\delta), \p(x))^{\frac{q-1}{q}} \frac{\left|h(\p(x+t\delta)-h(\p(x)) \right|}{t} \left|\varphi(x) \right| \diff x \\
    =& \int_\Omega \p(x)^{\frac{q-1}{q}}\left|\nabla h(\p) (x) \cdot \delta  \right| \left| \varphi(x) \right|\diff x,
\end{align*}

where we used the strong convergence $\p(\cdot +t \delta) \to \p$ in $L^1$ and $\frac{\left|h(\p(\cdot+t\delta)-h(\p) \right|}{t} \to \left|\nabla h(\p) \cdot \delta  \right|$ in $L^q$. This proves that $L_{\Tilde{f}_2} (\p)$ is a $W^{1,1}$ function such that $\left| \nabla L_{\Tilde{f}_2} (\p) \right| \leq \p^\frac{q-1}{q}\left| \nabla h(\p) \right|$, so that
\begin{align*}
    \int_{\Omega_T} \frac{\left| \nabla L_{\Tilde{f}_2} (\p)\right|^q}{\p^{q-1}} \diff x \diff t \leq \int_{\Omega_T} \left|\nabla h (\p) \right|^q \diff x \diff t < \infty 
\end{align*}

\textbf{Step 3 : Proving the chain rule}\\
In this step, we will prove the chain rule for the truncated function $\Tilde{f}$. 
\begin{lemma}
\label{derivationformula}
Let $(\p_t)_{t\in [0,T]} \in \textnormal{AC}^p (\mathbb{W}_p (\Omega))$ with velocity vector field $(v_t)_{t\in [0,T]}$, so that it solves the continuity equation
    \begin{equation*}
        \partial_t \p_t + \nabla \cdot ( \p_t v_t) =0,
    \end{equation*}
    and assume that $\p \in \mathcal{M}([0,T]\times \Omega)$ is absolutely continuous with respect to the Lebesgue measure.\\
    Let $\ell :\R_+ \to \R$ be a $C^1$ function satisfying (\textbf{H1-H2-H3-H5}), such that $L_\ell(\p) \in L^1 _{t,x}$, and
    \begin{align*}
        \int_{\Omega_T} \frac{\left|\nabla L_\ell(\p)\right|^q}{\p^{q-1}} \diff x \diff t <+\infty,
    \end{align*}
    then
    \begin{align*}
        \mathscr{L}(\p_T) - \mathscr{L}(\p_0) = \int_{\Omega_T} \frac{\nabla L_\ell(\p)}{\p} \cdot v \diff \p \diff t.
    \end{align*}
\end{lemma}
The proof of this Lemma can be obtained by combining the results of Theorems 10.3.18 and 10.4.6 from \cite{AGS}, but for the sake of completeness and to give a proof that does not use the heavy formalism developed throughout a whole book, we give an independent, even if somewhat technical, proof of this Lemma in \cref{appendix}.

We use \cref{derivationformula} with the functions $h=\Tilde{f}_1$, and $h=\Tilde{f}_2$ to obtain

\begin{corollary}
    With our previous notations, we have the following :
    \begin{align*}
        \Tilde{\mathscr{F}}_1 (\p_T) -  \Tilde{\mathscr{F}}_1 (\p_0) = \int_{\Omega_T} \frac{\nabla L_{\Tilde{f}_1}(\p)}{\p} \cdot v \diff \p \diff t, \\
        \Tilde{\mathscr{F}}_2 (\p_T) -  \Tilde{\mathscr{F}}_2 (\p_0) = \int_{\Omega_T} \frac{\nabla L_{\Tilde{f}_2}(\p)}{\p} \cdot v \diff \p \diff t,
    \end{align*}
    so that
    \begin{align*}
        \Tilde{\mathscr{F}} (\p_T) -  \Tilde{\mathscr{F}} (\p_0) =
        \int_{\Omega_T} \frac{\nabla L_{\Tilde{f}}(\p)}{\p}\cdot v \diff \p \diff t
    \end{align*}
\end{corollary}

\textbf{Step 4 : Passing the truncation to the limit}\\
We will now prove that we can pass our truncation process to the limit and still keep our inequality. As a reminder, we defined our linear truncation $\Tilde{f}_n$ as 
 \begin{align*}
        \Tilde{f}_n (z)=c^n + \begin{cases}
            a_0 ^n z  &\textnormal{ for } z\leq z_0 ^n, \\
            f(z) + b_0 ^n &\textnormal{ for } z_0^n  \leq z \leq z_1 ^n , \\
            a_1 ^n  z +b_1 ^n   &\textnormal{ for } z_1 ^n \leq z,
        \end{cases}
    \end{align*}
with $a_0 ^n = f'(z_0 ^n ) \leq a_1 ^n  = f'(z_1 ^n)$, $b_0 ^n  = L_f(z_0 ^n )$, $b_1 ^n = L_f(z_0 ^n ) - Lf(z_1 ^n )\leq 0$ and $c^n =f(z_0 ^n) - a_0 ^n z_0 ^n$. This formula just amounts to setting $\Tilde{f}_n ''(z)=0$ when $z\leq z_0 ^n$ and $z\geq z_1 ^n$. Since $f$ is convex, we have $\Tilde{f}_n \leq f$, so that by the monotone convergence theorem, we directly have
\begin{align*}
    \Tilde{\mathscr{F}}_n (\p) \to \mathscr{F}(\p)
\end{align*}
for any density $\p \in L^1 (\Omega)$. \\
For the slope term, since $L_h ' (z)=zh''(z)$, we have $L_{\Tilde{f}_n} (\p) = \left( \alpha^n \wedge (\beta^n \vee L_f (\p)) \right)$ for some constants $\alpha^n$ and $\beta^n$, so that $L_{\Tilde{f}_n}(\p)$ is a Lipschitz function of $L_f (\p)$ and we can write 
\begin{align*}
    \nabla L_{\Tilde{f}_n}({\p}) = \nabla L_{f}(\p) \mathbbm{1}_{\{\alpha^n \leq \p \leq \beta^n \}}
\end{align*}
Using dominated convergence, we can therefore conclude that
\begin{align}
    \label{finalineq}
    \mathscr{F} (\p_T) - \mathscr{F} (\p_0) =\int_{\Omega_T} \frac{\nabla L_{f}(\p)}{\p}\cdot v \diff \p \diff t
\end{align}
In the case when $f$ already satisfies McCann's condition (\textbf{H5}), one can skip steps 1,2 and 4 and directly use step 3 with the untruncated function $f$ itself.
\end{proof}

We are now ready to give the proof of \cref{maintheorem}.

\begin{proof}
    We will prove that the curve $(\p_t)_{t\in [0,T]}$ we constructed in Section 3 is the solution of the PDE \eqref{PDE}. We know that there exists a velocity field $(v_t)_{t\in [0,T]}$ such that we have
    \begin{align*}
        \partial_t \p_t + \nabla \cdot \left( \p_t v_t\right) =0,
    \end{align*}
    and the EDI condition \eqref{EDI}
    \begin{align*}
        \mathscr{F}(\p_0) \geq \mathscr{F}(\p_{T}) + \frac{1}{q} \int_{0} ^{T }\int_\Omega \left| \frac{\nabla L_f(\p_t )}{\p_t } \right|^q \diff \p_t \diff t + \frac{1}{p} \int_{0} ^{T } \int_\Omega \left| v _ t \right|^p \diff \p_t \diff t  .
    \end{align*}
Using \cref{superlinlemma,integrabilitylemma}, we can apply \cref{ChainRule} to obtain 
\begin{align*}
        \mathscr{F} (\p_T) - \mathscr{F} (\p_0) = \int_{\Omega_T} \frac{\nabla L_{f}(\p)}{\p}v \diff \p \diff t,
    \end{align*}
so that we have
\begin{align*}
    0 \geq \int_{\Omega_T}  \frac{\nabla L_f (\p)}{\p} \cdot  v + \frac{1}{q} \left|\frac{\nabla L_f (\p) }{\p} \right|^q+ \frac{1}{p}|v|^p \diff \p \diff t.
\end{align*}
According to Young's inequality, this implies that
\begin{align*}
    v=-\left(\frac{\nabla L_f (\p)}{\p} \right)^{q-1} \quad \quad \quad \p \textnormal{ a.e. for a.e. } t.
\end{align*}

\end{proof}

\section{BV Estimates for the solution}

In this section we give some BV estimates on the limit curve obtained by our $p$-JKO scheme, both our results are obtained using the flow interchange technique, with suitable assumptions on the initial condition and the function $f$.
The following lemma, proved in \cite{FiveGradsIneq}  (see also \cite{DiMMurRad}) as a generalization of the results of \cite{BVEstimates}, is useful to prove that BV bounds are conserved along out $p$-JKO scheme.

\begin{lemma}
\label{5grads}
Let $\Omega \subset \R^d$ be bounded and convex with non-empty interior, $\varrho,g \in W^{1,1}(\Omega)$ be two probability densities, $h\in C^1(\R^d)$ a radially symmetric strictly convex function and $H \in C^1(\R^d \backslash \{0\})$ be a radially symmetric convex function, then the following inequality holds 
\begin{equation}
    \int_{\Omega} \big( \nabla \varrho \cdot \nabla H ( \nabla \varphi) + \nabla g \cdot \nabla H (\nabla \psi) \big) \diff x \geq 0,
\end{equation}
where $(\varphi,\psi)$ is a choice of Kantorovich potentials for the optimal transport problem between $\varrho$ and $g$ for the transport cost given by $c(x,y)=h(x-y)$, with the convention that $\nabla H (0) = 0$.
\end{lemma}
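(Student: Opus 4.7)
The plan is to reduce the statement to a regular setting and then exploit the duality structure of optimal transport together with the oddness of $\nabla H$.

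First, by a standard approximation argument I would mollify $\varrho$ and $g$ and perturb them to be bounded away from $0$ and $+\infty$, so that by regularity theory for the optimal transport problem with cost $h(x-y)$ (assuming a suitable Ma--Trudinger--Wang type condition on $h$, or through a direct compactness/truncation argument adapted to the radially symmetric setting), the Kantorovich potentials $\varphi$ and $\psi$ become at least $C^2$. The general case would then follow by a limit passage, using the lower semicontinuity of the relevant integrals in the weak $W^{1,1}$ topology. This step is the least conceptually interesting but requires genuine care for non-quadratic cost functions.

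Second, I would use the explicit form of the optimal map $T(x) = x - (\nabla h)^{-1}(\nabla \varphi(x))$ (with symmetric formula for $S = T^{-1}$) together with the duality identity $\varphi(x) + \psi(T(x)) = h(x - T(x))$. Differentiating this last identity yields the crucial relation
\begin{equation*}
\nabla \psi(T(x)) = -\nabla \varphi(x).
\end{equation*}
Since $H$ is radially symmetric and convex, it is even, so $\nabla H$ is odd. Consequently $\nabla H(\nabla \psi(T(x))) = -\nabla H(\nabla \varphi(x))$: this is the key sign reversal that will allow a useful recombination of the two integrals in the inequality.

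Third, I would change variables $y = T(x)$ in the second integral, using the Monge--Amp\`ere relation $\varrho(x) = g(T(x))\lvert \det \nabla T(x)\rvert$. Writing the sum of the two integrals as
\begin{equation*}
\int_\Omega \bigl[\nabla \varrho(x) - \nabla g(T(x))\lvert \det \nabla T(x)\rvert \bigr] \cdot \nabla H(\nabla \varphi(x)) \diff x,
\end{equation*}
and then differentiating Monge--Amp\`ere to re-express $\nabla g \circ T$ in terms of $\nabla \varrho$ and $\nabla^{2} \varphi$, the expression should rearrange into an integrand of the form $\nabla^2 H(\nabla \varphi): A$ where $A$ is a non-negative symmetric matrix built from the Hessian of $\varphi$. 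Non-negativity then follows from convexity of $H$.

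I expect the main obstacle to lie precisely in this last step: for the quadratic cost $h(z) = \tfrac{1}{2}|z|^2$, one has $(\nabla h)^{-1} = \mathrm{id}$ and $\nabla T = I - \nabla^2 \varphi$, and the resulting non-negative quadratic form appears essentially by inspection. For a general radially symmetric convex $h$ the Hessians $\nabla^2 h$ and $\nabla^2 (h^*)$ enter the picture and the correct sign must be recovered from a delicate interplay between the convexity of $H$, the convexity of $h$, and the $c$-concavity properties of $\varphi$. This is precisely the computation performed in \cite{FiveGradsIneq}, generalizing the earlier argument of \cite{BVEstimates} for the quadratic cost; the non-trivial part is identifying the correct way to combine the Monge--Amp\`ere differentiation with the oddness of $\nabla H$ so that the cross terms cancel and only a non-negative piece remains.
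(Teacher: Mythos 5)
First, a meta-point: the paper does not prove \cref{5grads} at all --- it is stated with a citation to \cite{FiveGradsIneq} (generalizing \cite{BVEstimates}). So there is no ``paper's own proof'' to compare against; what follows assesses your sketch on its own merits.

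Your sketch has a genuine gap in Step 3. After the change of variables $y=T(x)$ and using the oddness of $\nabla H$, you do indeed arrive at $\int_\Omega \bigl[\nabla\varrho(x)-\nabla g(T(x))\lvert\det\nabla T(x)\rvert\bigr]\cdot\nabla H(\nabla\varphi(x))\diff x$. But differentiating the Monge--Amp\`ere relation $\varrho=g(T)\lvert\det\nabla T\rvert$ gives
\begin{equation*}
\nabla\varrho=(\nabla T)^{\mathsf T}(\nabla g)(T)\,\lvert\det\nabla T\rvert + g(T)\,\nabla\lvert\det\nabla T\rvert,
\end{equation*}
so the combination $\nabla\varrho-\nabla g(T)\lvert\det\nabla T\rvert$ still involves $(\nabla g)(T)$ and $\nabla\lvert\det\nabla T\rvert$, \emph{not} $\nabla^2 H$. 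No amount of algebra at this stage produces a term $\nabla^2 H(\nabla\varphi):A$, because you have never differentiated $\nabla H$. To obtain second derivatives of $H$ one must integrate by parts, and once you do, boundary terms $\int_{\partial\Omega}\varrho\,\nabla H(\nabla\varphi)\cdot n\diff\sigma$ appear. These are precisely where the convexity of $\Omega$ is used: since $T(x)-x$ points into $\bar\Omega$ and both $h$ and $H$ are radially symmetric and convex, $\nabla H(\nabla\varphi)\cdot n\geq 0$ on $\partial\Omega$. Your proposal never invokes convexity of $\Omega$, which is a structural signal that the argument as written cannot close; without it the inequality is false. A second, smaller issue is the appeal to a Ma--Trudinger--Wang condition for regularity in Step 1: the costs of interest here, $h(z)=\lvert z\rvert^p/p$ with $p\neq 2$, do not satisfy MTW, so that route is unavailable. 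The approximation used in the actual references works directly by regularizing the densities and exploiting the $W^{1,1}$ hypothesis, not via elliptic regularity of the potentials.
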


\begin{lemma}
    Let $g\in \prob (\Omega) \cap BV(\Omega)$, and denote by $\p$ the solution of problem \cref{jkoproblem}. Then $\p \in BV(\Omega)$ and $\|\p\|_{BV(\Omega)} \leq \|g\|_{BV(\Omega)}$
\end{lemma}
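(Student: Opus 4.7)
The plan is to apply the five gradients inequality (\cref{5grads}) to the entropic regularization \cref{approxpb} of the JKO step, then pass to the limit. Since \cref{5grads} requires $W^{1,1}$ regularity of both densities, I would first assume $g \in W^{1,1}(\Omega) \cap \prob(\Omega)$ and afterwards reduce the general case by a standard BV-approximation of $g$ by smooth probability densities $g_n$ with $\|g_n\|_{BV(\Omega)} \to \|g\|_{BV(\Omega)}$. Let $\p_\varepsilon$ denote the (Lipschitz, hence $W^{1,1}$) solution of \cref{approxpb} with target $g$, and let $(\varphi,\psi)$ be a pair of Kantorovich potentials between $\p_\varepsilon$ and $g$ for the cost $\frac{1}{p}|x-y|^p$. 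The first-order optimality condition recalled in Section~2.3, after differentiating in space, yields
\[
\nabla\varphi = -c\,f_\varepsilon''(\p_\varepsilon)\,\nabla\p_\varepsilon \qquad \text{a.e.\ in }\Omega,
\]
for some positive constant $c = c(\tau,p)$. The only feature of this relation that matters below is the negative sign, coupled with $f_\varepsilon''>0$.

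I would then invoke \cref{5grads} with $h(z) = \frac{1}{p}|z|^p$ (the transport cost itself) and $H(z) = |z|$, a radially symmetric convex function of class $C^1(\R^d\setminus\{0\})$ for which the convention $\nabla H(0) = 0$ is built in. On the set $\{\nabla\p_\varepsilon \neq 0\}$, the identity above gives $\nabla\varphi/|\nabla\varphi| = -\nabla\p_\varepsilon/|\nabla\p_\varepsilon|$, so the first integrand in the five gradients inequality reduces pointwise to $-|\nabla\p_\varepsilon|$; on the complementary set the integrand is zero by the convention. For the second term, $|\nabla H|\leq 1$ gives $\int_\Omega \nabla g\cdot\nabla H(\nabla\psi)\,dx \leq \int_\Omega |\nabla g|\,dx$. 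Adding the two terms, \cref{5grads} becomes $-\int_\Omega|\nabla\p_\varepsilon|\,dx + \int_\Omega|\nabla g|\,dx \geq 0$, i.e.\ $\|\p_\varepsilon\|_{BV(\Omega)} \leq \|g\|_{BV(\Omega)}$.

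To conclude, I pass to the limits: first $\varepsilon \to 0$, using \cref{convergencesolapprox} for the weak convergence $\p_\varepsilon \rightharpoonup \p$; the uniform BV bound combined with Rellich's compactness gives $L^1$-convergence (up to subsequence) to the same limit $\p$, and lower semicontinuity of the BV seminorm along $L^1$-convergent sequences yields $\|\p\|_{BV(\Omega)} \leq \liminf_\varepsilon\|\p_\varepsilon\|_{BV(\Omega)} \leq \|g\|_{BV(\Omega)}$. For a general $g\in BV(\Omega)\cap\prob(\Omega)$, applying the previous step to the approximations $g_n$ gives minimizers $\p^n$ with $\|\p^n\|_{BV}\leq\|g_n\|_{BV}$; the stability of the JKO step in the target measure (which follows easily from $\Gamma$-convergence and the uniqueness of the limit minimizer) identifies the $L^1$-limit of $\p^n$ as $\p$, and one more application of BV lower semicontinuity concludes. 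The only technical point is bookkeeping this double approximation; the real content of the argument is the single clean application of \cref{5grads} with $H(z)=|z|$, made possible by the negative-sign optimality relation between $\nabla\varphi$ and $\nabla\p_\varepsilon$.
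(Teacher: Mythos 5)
Your proof is correct and takes essentially the same approach as the paper: both apply the five gradients inequality (Lemma 5.1) to the entropically regularized problem with $H(z)=|z|$, use the first-order optimality condition to show $\nabla\varphi$ and $\nabla\p_\varepsilon$ are anti-parallel (so the first integrand becomes $-|\nabla\p_\varepsilon|$), pass to the limit $\varepsilon\to 0$ by lower semicontinuity of the BV seminorm, and finally relax the $W^{1,1}$ assumption on $g$ to $BV$ via a $\Gamma$-convergence/stability argument for the JKO minimizer.
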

\begin{proof}
    For the case $p=2$, the proof is done in \cite{FokkerPlanckLp} using the five gradients inequality for $p=2$. For the sake of completeness, we give below the same proof adapted to the general case, using the generalized five gradients inequality \cref{5grads}. As usual, we start by looking at the approximating problem \cref{approxpb}, and assume that $g\in W^{1,1} (\Omega)$. Applying \cref{5grads}, with the function $H(z)=|z|$, we obtain 
    \begin{align*}
        \int_\Omega \left( \nabla \p_\varepsilon \cdot \frac{\nabla \varphi}{|\nabla \varphi|} + \nabla g \cdot \frac{\nabla \psi}{| \nabla \psi|} \right) \diff x \geq 0
    \end{align*}
    Using the optimality condition $f'' _ \varepsilon ( \p_\varepsilon) \nabla \p_\varepsilon = - \frac{\nabla \varphi}{\tau^{p-1}}$, we know that $\nabla \p_\varepsilon$ and $\nabla \varphi$ point in opposite directions, so that we get
    \begin{align*}
        \int_\Omega | \nabla \p_\varepsilon | \diff x \leq \int_\Omega  \nabla g \cdot \frac{\nabla \psi}{| \nabla \psi|} \diff x \leq \int_\Omega |\nabla g | \diff x.
    \end{align*}
    Passing to the limit $\varepsilon \to 0$, we obtain $\|\p \|_{BV} \leq \|g\|_{W^{1,1}}$, and we pass to $g \in BV(\Omega)$ by approximating $g$ by $g_n \in W^{1,1}$ with converging $BV$ norm. This yields uniform convergence of the functional $\p \mapsto W_p (\p,g_n)$ to $\p \mapsto W_p (\p, g)$ and therefore $\Gamma$-convergence of the functional in problem \eqref{jkoproblem}, so that the corresponding minimizers $\p_n$ converge to the minimizer of the limit problem $\p$. Using the lower semi continuity of the $BV$ norm, we obtain the claimed inequality $\|\p\|_{BV(\Omega)} \leq \|g\|_{BV(\Omega)}$
\end{proof}
A direct application of the above result immediately gives the following corollaries :
\begin{corollary}
\label{bvdecreaseJKO}
    The recursive sequence $(\p_k ^\tau)$ obtained from the $p$-JKO scheme \eqref{iteratedjko} satisfies\linebreak $\|\p_{k+1}\|_{BV} ^\tau \leq \| \p_k ^\tau\| _{BV}$.
\end{corollary}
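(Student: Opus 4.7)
The plan is a direct induction on $k$, using the preceding lemma as the single-step engine. Assume that the initial datum $\p_0$ belongs to $\prob(\Omega)\cap BV(\Omega)$ (otherwise both sides of the inequality for $k=0$ are infinite, so there is nothing to prove). I would proceed as follows.

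First, I note that the $(k+1)$-th element of the $p$-JKO sequence is by definition the minimizer of the problem \eqref{jkoproblem} with $g = \p_k^\tau$. Therefore, provided that $\p_k^\tau \in BV(\Omega)$, the previous lemma applies with $g=\p_k^\tau$ and immediately yields
\begin{equation*}
    \|\p_{k+1}^\tau\|_{BV(\Omega)} \le \|\p_k^\tau\|_{BV(\Omega)}.
\end{equation*}
In particular, the right-hand side is finite, so $\p_{k+1}^\tau\in BV(\Omega)$ as well, and the induction step closes.

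The base case is $k=0$, where the hypothesis $\p_0\in BV(\Omega)$ gives that the one-step lemma applies with $g=\p_0$, producing $\|\p_1^\tau\|_{BV}\le\|\p_0\|_{BV}<\infty$. Since the induction step and base case are both furnished directly by the previous lemma, there is essentially no additional work to do, and no real obstacle: the only point worth flagging is the need to check at each application that the ``source'' measure $g=\p_k^\tau$ indeed lies in $BV(\Omega)\cap\prob(\Omega)$, which is exactly what the induction hypothesis provides. The statement of the corollary then follows by a trivial monotonicity argument: the sequence $(\|\p_k^\tau\|_{BV})_{k\in\N}$ is nonincreasing, and in fact bounded above by $\|\p_0\|_{BV}$ for every $k$.
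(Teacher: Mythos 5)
Your proof is correct and matches the paper's intent: the paper itself simply states that this corollary is ``a direct application of the above result,'' i.e.\ the single-step BV lemma applied with $g=\p_k^\tau$, which is exactly what you do. One small simplification is available: the induction bookkeeping is not strictly necessary, since whenever $\p_k^\tau \notin BV(\Omega)$ the right-hand side is $+\infty$ and the inequality $\|\p_{k+1}^\tau\|_{BV}\le\|\p_k^\tau\|_{BV}$ holds vacuously, so the one-step lemma suffices without tracking finiteness.
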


\begin{corollary}
\label{bvdecreaseJKO2}
    If $\p_0 \in BV(\Omega)$, then $\p_t \in BV(\Omega)$ for all $t\geq 0$ and $\|\p_t\|_{BV} \leq \|\p_0 \|_{BV}$. 
\end{corollary}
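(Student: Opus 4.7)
The plan is to obtain Corollary \ref{bvdecreaseJKO2} as a direct consequence of Corollary \ref{bvdecreaseJKO}, combined with the convergence properties of the $p$-JKO interpolations established in Section 3 and the lower semicontinuity of the BV norm.

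First, I would proceed by induction on $k$. Since $\p_0 \in BV(\Omega)$, Corollary \ref{bvdecreaseJKO} applied with $g=\p_0$ gives $\|\p_1^\tau\|_{BV} \leq \|\p_0\|_{BV}$. Assuming $\|\p_k^\tau\|_{BV} \leq \|\p_0\|_{BV}$, another application of Corollary \ref{bvdecreaseJKO} with $g=\p_k^\tau$ yields $\|\p_{k+1}^\tau\|_{BV} \leq \|\p_k^\tau\|_{BV} \leq \|\p_0\|_{BV}$. Hence the piecewise constant interpolation $\Bar{\p}_t^\tau$ satisfies $\|\Bar{\p}_t^\tau\|_{BV(\Omega)} \leq \|\p_0\|_{BV(\Omega)}$ uniformly in $t$ and $\tau$.

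Next, from Section \ref{compactness_section} we know that, up to a subsequence, $\Bar{\p}_t^\tau$ converges to $\p_t$ in $\mathbb{W}_p(\Omega)$ (and hence weakly as measures) for every $t \in [0,T]$. Since $\Omega$ is bounded, the uniform $BV$ bound implies uniform $L^1$ boundedness and equi-integrability, so by the compact embedding $BV(\Omega) \hookrightarrow L^1(\Omega)$ the weak convergence actually upgrades to strong $L^1$ convergence of $\Bar{\p}_t^\tau$ to $\p_t$ (for each fixed $t$, along the subsequence). Then by the lower semicontinuity of the $BV$ norm with respect to $L^1$ convergence,
\begin{equation*}
    \|\p_t\|_{BV(\Omega)} \leq \liminf_{\tau \to 0} \|\Bar{\p}_t^\tau\|_{BV(\Omega)} \leq \|\p_0\|_{BV(\Omega)}.
\end{equation*}

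There is no real obstacle here; the only small point to be careful about is to ensure the lower semicontinuity step is applied to a convergent sequence extracted from a $BV$-bounded one, which follows from the Rellich-type compactness of $BV$ in $L^1$ on a bounded domain. This completes the proof of Corollary \ref{bvdecreaseJKO2}.
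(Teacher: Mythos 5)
Your proof is correct and is exactly what the paper has in mind when it says these corollaries follow by "a direct application of the above result": iterate Corollary \ref{bvdecreaseJKO} along the $p$-JKO chain to get a uniform $BV$ bound on $\Bar{\p}_t^\tau$, then use the convergence $\Bar{\p}_t^\tau \to \p_t$ established in Section \ref{compactness_section} together with lower semicontinuity of the $BV$ norm. The only (minor) redundancy is the detour through strong $L^1$ convergence via compact embedding: the $BV$ norm is already lower semicontinuous with respect to weak-$*$ (or weak $L^1$) convergence, so one can pass to the limit directly from the $\mathbb{W}_p$ convergence without extracting a further strongly convergent subsequence.
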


\begin{remark}
    If we know that the solution to \eqref{PDE} is unique, when given some $t\geq 0$, we can use \cref{bvdecreaseJKO2} to prove that for all $s\geq 0$, we have $\|\p_{t+s}\|_{BV} \leq \|\p_t \|_{BV}$. Indeed, we can look at the same $p$-JKO scheme with $\p_t$ as an initial condition, and obtain a solution $(\Bar{\p}_s)_s$ such that $\|\Bar{\p}_{s}\|_{BV} \leq \|\p_t \|_{BV}$. Uniqueness gives $\Bar{\p}_{s}=\p_{t+s}$ and the result. 
\end{remark}

We now prove an instantaneous $BV$ regularization property for the limit curve constructed by our $p$-JKO scheme. Notice that in the following theorem, we assume that $f''(z) \geq Cz^{-\theta}$ not only for large $z$, but for $z$ near $0$ as well. 

\begin{theorem}
    Assume $f''(z) \geq Cz^{-\theta}$ for all $z\in \R_+$, $\theta \geq - \frac{p-1}{d}$, and let $\beta =\max\left(1- \frac{1}{d};\frac{\theta}{p}+\frac{1}{q} \right)$. Assume moreover that $\p_0\in L^\beta$ whenever $\beta>1$, or $\int_\Omega \p_0 \log \p_0 \diff x < + \infty $ if $ \beta = 1$, then there exists some $\Tilde{C}>0$ such that for all $t>0$, we have $\|\p_t \|_{BV} \leq \Tilde{C} t^{-\frac{1}{q}}$
\end{theorem}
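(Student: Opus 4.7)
The plan is to deduce the pointwise decay from two ingredients: the BV monotonicity along the discrete $p$-JKO scheme given by Corollary~\ref{bvdecreaseJKO} (which will give $k\mapsto \|\p_k^\tau\|_{BV}$ non-increasing, once $\p_k^\tau\in BV$ for $k\ge 1$), and an integrated bound $\sum_k \tau\,\|\p_{k+1}^\tau\|_{BV}^{q}\le M$ uniform in $\tau$ which also forces $\p_k^\tau \in BV$ for every $k\ge 1$. Combining the two, for every $k\ge 1$,
\begin{equation*}
k\tau\,\|\p_k^\tau\|_{BV}^{q}\le\sum_{j=1}^{k}\tau\,\|\p_j^\tau\|_{BV}^{q}\le M,
\end{equation*}
so $\|\bar\p_t^\tau\|_{BV}\le(M/t)^{1/q}$ uniformly in $\tau$ for every $t>0$, and lower semicontinuity of the BV seminorm for the weak-$*$ convergence of measures yields $\|\p_t\|_{BV}\le\tilde C\,t^{-1/q}$.

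To produce the integrated bound I would apply the flow interchange of Lemma~\ref{flowinterchange} at every step of the $p$-JKO scheme with $H(z)=z^\beta/(\beta(\beta-1))$, or $H(z)=z\log z$ in the borderline case $\beta=1$. A direct computation of $\mathcal{M}H(s)=s^{d(1-\beta)}/(\beta(\beta-1))$ shows that $H$ satisfies (\textbf{H5}) exactly when $\beta\ge 1-\frac{1}{d}$, which is our standing assumption. For $\beta\in(1-\frac{1}{d},1)$ the function $H$ is itself negative and decreasing, but $\mathscr{H}$ is then automatically bounded above by $0$ and bounded below by a constant multiple of $|\Omega|^{1-\beta}$ via Jensen's inequality, while for $\beta\ge 1$ the finiteness of $\mathscr{H}(\p_0)$ is exactly the assumed $L^\beta$ (respectively entropy) integrability of $\p_0$. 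Since $L_H'(z)=z^{\beta-1}$ (with $L_H'=1$ if $\beta=1$), summing Lemma~\ref{flowinterchange} over the JKO steps and passing to the limit of the $\varepsilon$-approximations yields
\begin{equation*}
\sum_k\tau\int_\Omega\p_{k+1}^{\beta-1}f''(\p_{k+1})^{q-1}|\nabla\p_{k+1}|^q\diff x\le C.
\end{equation*}

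I then turn this weighted bound into a BV estimate via Hölder's inequality with conjugate exponents $q,p$: writing $W=\p^{\beta-1}f''(\p)^{q-1}$,
\begin{equation*}
\int_\Omega|\nabla\p|\diff x\le\left(\int_\Omega W|\nabla\p|^q\diff x\right)^{1/q}\left(\int_\Omega W^{-(p-1)}\diff x\right)^{1/p},
\end{equation*}
and the lower bound $f''(z)\ge Cz^{-\theta}$ gives $W^{-(p-1)}\le C\p^{\gamma}$ with $\gamma:=-(\beta-1)(p-1)+\theta$. A direct check on the two possibilities in $\beta=\max(1-\frac{1}{d},\frac{\theta}{p}+\frac{1}{q})$ shows that in the case $\beta=\theta/p+1/q$ one has $\gamma=\beta$, whereas in the case $\beta=1-\frac{1}{d}$ one has $\gamma=\theta+(p-1)/d\in[0,1-\frac{1}{d}]$; thus $\gamma\in[0,\beta]$ in every case, with $\gamma\ge 0$ precisely forced by the hypothesis $\theta\ge-(p-1)/d$. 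The uniform $L^\beta$ bound on $\p_k^\tau$ (obtained as in Lemma~\ref{integrabilitylemma} when $\beta\ge 1$, and trivially via Jensen when $\beta<1$) then controls $\int_\Omega\p^\gamma\diff x$ uniformly in $k$ by interpolation between $\|\p\|_{L^1}=1$ and the $L^\beta$ bound. Raising the Hölder inequality to the $q$-th power and summing in $k$ gives $\sum_k\tau\|\p_{k+1}^\tau\|_{BV}^{q}\le M$, closing the argument.

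The most delicate point is the verification that $\gamma\in[0,\beta]$: this identifies the two arguments of the maximum defining $\beta$ as the saturations of the upper and lower bounds on $\gamma$ and pinpoints the exact role of the lower bound $\theta\ge -(p-1)/d$ in the hypotheses. The handling of $H$ in the regime $\beta<1$, where $\mathscr{H}$ has the unusual feature of being a non-positive monotone quantity along the scheme, is the other point requiring some care; everything else is a routine assembly of the tools already developed in the paper.
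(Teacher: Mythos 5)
Your proof is correct and follows essentially the same route as the paper's: flow interchange with the power entropy $z\mapsto z^\beta$ (normalized) to obtain the dissipation bound $\sum_k\tau\int_\Omega\p^{\beta-1}f''(\p)^{q-1}|\nabla\p|^q\,\diff x\le C$, a Hölder split that converts this into $\sum_k\tau\|\p_k^\tau\|_{BV}^q\le M$, and the BV monotonicity of Corollary~\ref{bvdecreaseJKO} to turn the integrated bound into the pointwise decay $\|\p_t\|_{BV}\le\tilde C\,t^{-1/q}$. Your exponent $\gamma=-(\beta-1)(p-1)+\theta$ is exactly the paper's $\gamma p$, and the verification that $\gamma\in[0,\beta]$ (with $\gamma\ge0$ saturated by $\theta\ge-(p-1)/d$) matches the paper's check; the only genuine additions are minor but welcome clarifications about the sign of the entropy when $\beta<1$.
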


\begin{proof}
    We will once again use the flow interchange technique, starting from the $k$-th step of our $p$-JKO scheme, we look at the $\varepsilon$ approximating problem and apply \cref{flowinterchange} with the function $z \mapsto \frac{1}{\beta -1} z^\beta$ (or $z \mapsto z\log(z)$ in the case $\beta=1$ with similar computations) to get
    \begin{align*}
        \frac{1}{\beta -1} \int_\Omega (\p_k ^\tau)^\beta \diff x - \frac{1}{\beta -1} \int_\Omega \p^\beta _\varepsilon \diff x &\geq \tau \beta \int_\Omega \p^{\beta -1} _\varepsilon f''(\p_\varepsilon)^{q-1} |\nabla \p_\varepsilon |^q \diff x \\
        & \geq C \beta \tau \int_\Omega \p_\varepsilon ^{\beta - 1 - \theta (q-1)} |\nabla \p_\varepsilon |^q \diff x.
    \end{align*}
    Now, using H\"older's inequality we obtain 
    \begin{align*}
        \int_\Omega |\nabla \p_\varepsilon | \diff x = \int_\Omega \p_\varepsilon^{-\gamma}\p_\varepsilon ^\gamma |\nabla \p_\varepsilon | \diff x \leq \left( \int_\Omega \p_\varepsilon ^{\gamma p} \diff x \right)^{\frac{1}{p}} \left( \int_\Omega \p_\varepsilon ^{-\gamma q}  |\nabla \p_\varepsilon |^q\diff x \right)^{\frac{1}{q}},
    \end{align*}
    where we chose $\gamma$ to statisfy $-\gamma q =\beta -1 -\theta (q-1)$. In order for $ \int_\Omega \p_\varepsilon ^{\gamma p} \diff x$ to be bounded, we want $\gamma$ to be such that $0 \leq \gamma p \leq \beta$, which is equivalent to $\frac{\theta}{p}+\frac{1}{q} \leq \beta \leq \left( \frac{\theta}{p}+\frac{1}{q} \right) q$ . To apply the flow interchange technique, we also need $\beta \geq 1 - \frac{1}{d}$, so that the condition $\theta \geq -\frac{p-1}{d}$, which implies $q \left( \frac{\theta}{p}+\frac{1}{q} \right) \geq 1 - \frac{1}{d}$, allows us to take $\beta =\max\left(1- \frac{1}{d};\frac{\theta}{p}+\frac{1}{q} \right)$.  We can therefore write
    \begin{align*}
        \tau \left( \int_\Omega |\nabla \p_\varepsilon | \diff x \right)^q \leq \left( \int_\Omega (\p_k ^\tau)^{\gamma p} \diff x \right)^\frac{q}{p} \left(\frac{1}{\beta -1} \int_\Omega (\p_k ^\tau)^\beta \diff x - \frac{1}{\beta -1} \int_\Omega \p^\beta _\varepsilon \diff x\right),
    \end{align*}
    and taking the limit $\varepsilon \to 0$ and summing over steps of the scheme gives
    \begin{align*}
        \sum_k \tau \left( \int_\Omega |\nabla \p_k ^\tau | \diff x  \right)^q \leq \Tilde{C}.
    \end{align*}
    Using \cref{bvdecreaseJKO} we can obtain
    \begin{align*}
        \int_\Omega |\nabla \Bar{\p}_t ^\tau | \leq \Tilde{C} t^{-\frac{1}{q}},
    \end{align*}
    and passing to the limit $\tau \to 0$ we have $\|\p_t\|_{BV(\Omega)} \leq \Tilde{C} t^{-\frac{1}{q}}$ for all $t>0$.
\end{proof}

\appendix

\section{Proof of the chain rule}
\label{appendix}
This section is devoted to the proof of \cref{derivationformula}, the idea is to discretize the curve $(\p_t)_t$ and approximate it with its geodesic interpolation, for which we can use the geodesic convexity of $\ell$ to obtain the discrete chain rule. We then pass to the limit and recover the chain rule for the original curve.
\begin{proof}
    Since $\p \in \mathcal{M}([0,T]\times \Omega)$ is absolutely continuous with respect to the Lebesgue measure, the Dunford-Pettis theorem yields the existence of some superlinear function $\Phi : \R_+ \to \R_+$, such that 
    \begin{align*}
        \int_0 ^T \int_\Omega \Phi(\p_t) \diff x \diff t < \infty,
    \end{align*}
    which we can assume to be smooth and strictly convex. We use \cref{sommes} to find a sequence of partitions $0=t_0 ^N<t_1 ^N< \dots <t_{N+1} ^N =T$ of $[0,T]$ with $\displaystyle \sup_{i} t_{i+1} ^N - t_i ^N \xrightarrow{N \to \infty} 0$, such that
    \begin{align*}
        \int_\Omega L_\ell(\p_{t_i ^N}) \diff x < \infty,
    \end{align*}
    and
\begin{align*}
    \sum_{k=1} ^N (t_{i+1} ^N - t_i ^N)A(t_i ^N) \to \int_0 ^T A(t) \diff t,
\end{align*}
where $A(t)=\displaystyle\int_{\Omega} \left|\frac{\nabla L_\ell(\p_t)}{\p_t} \right|^q \diff \p_t + \displaystyle\int_{\Omega} \Phi(\p_t)\diff x$. Like before, we denote by $\Bar{\p} ^N$ the piecewise constant interpolation of $(\p_{t_i ^N})_i$, defined by $\Bar{\p}_t ^N = \p_{t_i ^N}$ if $t\in [t_i ^N,t_{i+1} ^N[$ for $i=1,\dots,N$, and $\Bar{\p}_t ^N=0$ for $t\in [0,t_1[$ so that we have
\begin{multline}
    \label{cvgsome}
          \int_{\Omega_T} \left|\frac{\nabla L_\ell(\Bar{\p}_t ^N)}{\Bar{\p}_t ^N} \right|^q \diff \Bar{\p}_t ^N \diff t + \int_{\Omega_T} \Phi(\Bar{\p}_t ^N) \diff x \diff t \\\xrightarrow{N\to +\infty}   \int_{\Omega_T} \left|\frac{\nabla L_\ell(\p_t)}{\p_t} \right|^q \diff \p_t \diff t + \int_{\Omega_T} \Phi(\p_t) \diff x \diff t < \infty .
    \end{multline}
Using the same argument as in the proof of \cref{flowinterchange}, the geodesic convexity of $\mathscr{L}$, and the fact that $L_\ell(\p)$ is $W^{1,1}$ allows us to write 
\begin{align*}
    \mathscr{L}(\p_{t_{i+1} ^N}) - \mathscr{L}(\p_{t_i ^N}) &\geq  \int_\Omega \frac{\nabla L_{\ell} (\p_{t_i ^N})}{\p_{t_i ^N}}(T_i  - \textnormal{id} )\diff \p_{t_i ^N}\\
    &=\int_{t_i ^N} ^{t_{i+1} ^N} \int_\Omega \frac{\nabla L_\ell(\Bar{\p}_t ^N)}{\Bar{\p}_t ^N} \Bar{v}^N _t \diff \Bar{\p}_t ^N \diff t,
\end{align*}
where $T_i$ is the optimal transport map from $\p_{t_i ^N}$ to $\p_{t_{i+1} ^N}$ and $\Bar{v}_t ^N = \frac{T_i - \textnormal{id} }{t_{i+1} ^N-t_i ^N}$. 
Summing everything gives
\begin{align}
\label{trucpasserlimite}
    \mathscr{L}(\p_T)-\mathscr{L}(\p_{t_1}) \geq \int_{\Omega_T} \frac{\nabla L_\ell(\Bar{\p}_t ^N)}{\Bar{\p}_t ^N} \Bar{v}_t ^N \diff \Bar{\p}_t ^N \diff t.
\end{align}
We will now prove that this inequality passes to the limit. Since $\p \in C^0([0,T], \mathbb{W}_p)$, we have that $\Bar{\p}_t ^N \to \p_t$ weakly in $L^1$ for any fixed $t\in ]0,T]$. From \cref{deffunctional,SlopeLSC} and using Fatou's Lemma, we therefore obtain that
\begin{align*}
     \liminf \int_{\Omega_T} \left|\frac{\nabla L_\ell(\Bar{\p}_t ^N)}{\Bar{\p}_t ^N} \right|^q \diff \Bar{\p}_t ^N \diff t &\geq  \int_{\Omega_T} \left|\frac{\nabla L_\ell(\p)}{\p} \right|^q \diff \p \diff t  \\
     \liminf \int_{\Omega_T} \Phi(\Bar{\p}_t ^N) \diff x \diff t &\geq \int_{\Omega_T} \Phi(\p_t) \diff x \diff t.
\end{align*}
Together with \eqref{cvgsome}, this implies that 
\begin{align*}
      \int_{\Omega_T} \left|\frac{\nabla L_\ell(\Bar{\p}_t ^N)}{\Bar{\p}_t ^N} \right|^q \diff \Bar{\p}_t ^N \diff t &\to  \int_{\Omega_T} \left|\frac{\nabla L_\ell(\p_t)}{\p_t} \right|^q \diff \p_t \diff t  \\
     \int_{\Omega_T} \Phi(\Bar{\p}_t ^N) \diff x \diff t &\to  \int_{\Omega_T} \Phi(\p_t) \diff x \diff t.
\end{align*}
We will now use a classical trick of convex functions to obtain the almost everywhere convergence of $\Bar{\p}_t$ (see \cite{evans1990weak}). We define $\omega$ to be the convexity gap of $\Phi$, which is to say
\begin{align*}
    \omega(z,x)= \Phi(z) - \Phi(x) - \Phi'(x)(z-x) \geq 0.
\end{align*}
By the strict convexity of $\Phi$, $\omega(z,x)=0$ if and only if $z=x$, and if $(z_n)_n$ is a sequence such that $\omega(z_n,x)\to 0$, then $z_n \to x$. Let $A_M = \left\{\p \leq M\right\}$, we have:
\begin{multline*}
    \int_{\Omega_T} \Phi(\Bar{\p}_t ^N) \diff x \diff t \geq \int_{A_M} \Phi(\p_t) \diff x \diff t + \int_{A_M} \Phi'(\p_t)(\Bar{\p}_t ^N - \p_t)\diff x \diff t \\
    + \int_{A_M} \omega(\Bar{\p}_t ^N, \p_t) \diff x \diff t.
\end{multline*}
We know that $\Bar{\p}^N$ weakly converges in $L^1 _ {t,x}$ to $\p$, and on $A_M$, $ \Phi'(\p)$ is bounded, so that the second integral vanishes in the limit. Taking the $\limsup$ we therefore get
\begin{align*}
    \int_{A_M ^c} \Phi(\p_t) \diff x \diff t \geq \limsup \int_{A_M} \omega(\Bar{\p}_t ^N,\p_t) \diff x \diff t.
\end{align*}
Notice that the left hand side is decreasing in $M$, while the right hand side is increasing in $M$, therefore, taking the limit $M\to +\infty$, we have
\begin{align*}
    0=\lim \int_{\Omega_T} \omega(\Bar{\p}_t ^N,\p_t) \diff x \diff t
\end{align*}
Up to extraction, we can therefore assume that $\omega(\Bar{\p},\p) \to 0$ almost everywhere, which, as stated above entails that $\Bar{\p} \to \p$ almost everywhere. 
Again \cref{cvgsome} implies that $\int_{\Omega_T} \left| \nabla L_\ell (\Bar{\p}_t ^N) \right| \diff x \diff t$ is bounded, so that up to extraction we can assume $(\nabla L_\ell (\Bar{\p}^N))$ converges to some measure. Using the strong convergence of $\Bar{\p}$, one can identify this limit as $\nabla L_\ell(\p)$. We also know that $   \int_{\Omega_T} \left|\frac{\nabla L_\ell(\Bar{\p}_t ^N )}{\Bar{\p}_t ^N} \right|^q \diff \Bar{\p}_t ^N \diff t$ is bounded, so that up to extraction, $(\nabla L_\ell (\Bar{\p}^N) (\Bar{\p}^N)^{-\frac{1}{p}})$ weakly converges in $L^q _{t,x}$ to some $\xi$. Multiplying by $(\Bar{\p}^N)^\frac{1}{p}$, which strongly converges to $\p^\frac{1}{p}$ in $L^p _ {t,x}$, we obtain $\nabla L_\ell(\Bar{\p} ^N) \rightharpoonup \xi \p^\frac{1}{p}$, so that by uniqueness of the limit, $\xi = \nabla L_\ell(\p) \p^{-\frac{1}{p}}$. Since we also have the convergence of the $L^q$ norm, we actually have strong convergence of the slope term.

We now prove the convergence of the discrete velocity field $\Bar{v}^N$. Just like in \cref{compactness_section}, by looking at the geodesic interpolation we can prove the following inequality :
\begin{align*}
    \int_{\Omega_T} \left|\Bar{v}_t ^N\right|^p \diff \Bar{\p}_t ^N \diff t \leq \int_{\Omega_T} \left|v_t \right|^p \diff \p \diff t,
\end{align*}
which implies that the sequence $\left(\Bar{v}^N (\Bar{\p}^N)^\frac{1}{p} \right)$ is bounded in $L^p _{t,x}$, so that up to extraction it weakly converges to some $\xi \in L^p _{t,x}$. Again like before we also have
\begin{align*}
    \int_{\Omega_T} \left|\Bar{v}_t ^N\Bar{\p}_t ^N\right| \diff x \diff t \leq T^\frac{1}{q}\left(\int_0 ^T \lVert \Bar{v}_t ^N \rVert ^p _{L^p (\Bar{\p}_t ^N)} \right)^\frac{1}{p} \leq C,
\end{align*}
so that the sequence $\left(\Bar{v}^N\Bar{\p}^N \right)$ weakly converges as a measure to some $E \in \mathcal{M}(\Omega)^d$. Using the convergence $\Bar{\p}^N \to \p$ and \cref{BenamouBrenier}, we deduce that there exists some $\hat{v}\in L^p(\p)$ such that $E=\hat{v}\p$ and 
\begin{align*}
    \int_{\Omega_T} \left|v\right|^p \diff \p \diff t \geq \liminf_{N \to \infty} \int_{\Omega_T} \left|\Bar{v}_t ^N \right|^p \diff \Bar{\p} \diff t \geq \int_{\Omega_T} \left|\hat{v}\right|^p \diff \p \diff t.
\end{align*}
Using arguments as in \cite{OTAM} section 9.3, we can find that $(\p,E)$ satisfy the continuity equation, so that since we chose $v$ to be the velocity field with minimum norm, we have $\hat{v}=v$ in $L^p(\p_t)$ for almost every $t$ and
\begin{align*}
    \int_{\Omega_T} \left|v\right|^p \diff \p \diff t =\int_{\Omega_T} \left|\hat{v}\right|^p \diff \p \diff t= \lim_{N \to \infty} \int_{\Omega_T} \left|\Bar{v}_t ^N\right|^p \diff \Bar{\p}_t ^N \diff t.
\end{align*}
We will now use the same convex trick to prove prove that $\xi =v \p^\frac{1}{p}$. We write $\omega(z,x)=\frac{|z|^p}{p} - \frac{|x|^p}{p} - x^{p-1} (z-x) \geq 0$, which enjoys the same properties as before, and $A_M= \left\{ |v|\leq M\right\}$, and we have
\begin{multline}
\label{trickforv}
    \int_{\Omega_T} \Bar{\p}_t ^N \frac{|\Bar{v}_t ^N|^p}{p} \diff x \diff t \geq \int_{A_M} \Bar{\p}_t ^N \frac{|v|^p}{p} \diff x \diff t + \int_{A_M} \Bar{\p}_t ^N |v|^{p-1} (\Bar{v}_t ^N - v) \diff x \diff t \\
    + \int_{A_M} \Bar{\p}_t ^N \omega (\Bar{v}_t ^N,v) \diff x \diff t.
\end{multline}
We know that $(\Bar{\p}_t ^N\Bar{v}_t ^N)$ is uniformly integrable, indeed, if $B \subset  \Omega\times [0,T]$ is measurable, then
\begin{align*}
    \int_B \Bar{\p}_t ^N \Bar{v}_t ^N \diff x \diff t  \leq \left(\int_B \Bar{\p}_t ^N |\Bar{v}_t ^N|^p \diff x \diff t \right)^\frac{1}{p} \left( \int_B \Bar{\p}_t ^N \diff x \diff t \right)^\frac{1}{q} \leq C \left( \int_B \Bar{\p}_t ^N \diff x \diff t\right)^\frac{1}{q}  , 
\end{align*}
so that the uniform integrability of $\Bar{\p}^N$ implies the unform integrability of $(\Bar{\p}^N\Bar{v}^N)$. Therefore, we conclude that $\Bar{\p}^N\Bar{v}^N \rightharpoonup \p v$ weakly in $L^1 _ {t,x}$ and
\begin{align*}
    \int_{A_M} \Bar{p}_t ^N\Bar{v}_t ^N|v|^{p-1}\diff x \diff t \to \int_{A_M} \p |v|^p \diff x \diff t
\end{align*}
Taking the limit in \cref{trickforv}, we therefore get
\begin{align*}
    \int_{A_M ^c} \p \frac{|v|^p}{p} \diff x \diff t \geq \limsup_{N \to \infty} \int_{A_M} \Bar{\p}_t ^N \omega(\Bar{v}_t ^N, v) \diff x \diff t.
\end{align*}
Again, the left hand side is decreasing in $M$ while the right hand side is increasing in $M$, so that we have
\begin{align*}
    0 = \limsup_{N \to \infty} \int_{\Omega_T} \Bar{\p}_t ^N\omega(\Bar{v}_t ^N,v) \diff x \diff t,
\end{align*}
and, up to extracting again, we find that $\Bar{v} \to v$ almost everywhere on on $\{\p > 0\}$. From this convergence and $\Bar{v} ^N(\Bar{\p}^N)^\frac{1}{p} \rightharpoonup \xi$, we deduce that $\xi = v \p^\frac{1}{p}$ on $\{\p >0 \}$.
Putting everything together,  we can write
\begin{align*}
    \int_{\Omega_T} |\xi|^p \diff x \diff t \leq \liminf_{N \to \infty} \int_{\Omega_T} |\Bar{v}_t ^N|^p \Bar{\p}_t ^N \diff x \diff x &=\int_{\Omega_T} |v|^p \p  \diff x \diff t \\
    &=\int_{\{\p >0\}} |v|^p \p  \diff x \diff t \\
    &=\int_{\{\p >0\}} |\xi|^p   \diff x \diff t \leq \int_{\Omega_T} |\xi|^p  \diff x \diff t,
\end{align*}
so that $\xi = v \p^\frac{1}{p}$ almost everywhere. In conclusion, we have proved that $\Bar{v}^N (\Bar{\p}^N)^\frac{1}{p} \to v \p^\frac{1}{p} $ in $L^p _{t,x}$, because we also have convergence of the norm. We can now finally pass to the limit in \cref{trucpasserlimite}, and we obtain 
\begin{align*}
     \mathscr{L}(\p_T)-\mathscr{L}(\p_0) \geq \int_{\Omega_T} \frac{\nabla L_\ell(\p)}{\p} \cdot  v \diff \p \diff t.
\end{align*}
Applying this result to the same curve while inverting the time variable, i.e.\ to the curve $\rho_t=\p_{T-t}$ (whose velocity vector is $-v_{T-t}$) we obtain 
\begin{align*}
     \mathscr{L}(\p_T)-\mathscr{L}(\p_0) = \int_{\Omega_T} \frac{\nabla L_\ell(\p)}{\p} v \diff \p \diff t.
\end{align*}
\end{proof}

In the previous proof we needed the following technical integration lemma.

\begin{lemma}
\label{sommes}
Let $f: [0,T] \to \R_+$ be a $L^1$ function. For all $\varepsilon >0$, there exists $0=t_0< t_1< t_2< \dots < t_{n+1}=T$  such that for all $i \in \left\{0, \dots ,n\right\}$, $t_{i+1} - t_i \leq \varepsilon$ and
\begin{align*}
    \left|\sum_{i=1} ^n f(t_i) (t_{i+1}- t_i) - \int_0 ^T f(x) \diff x \right| \leq \varepsilon.
\end{align*}
The $(t_i)_{i=1,\dots,n}$ can also be taken from a set $[0;T]\setminus A$ where $A$ has zero Lebesgue measure.
\end{lemma}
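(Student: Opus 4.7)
The plan is to combine the Lebesgue differentiation theorem with Vitali's covering theorem. The set $L \subset [0,T]$ of Lebesgue points of $f$ has full measure, and for every $t \in L$ and every $\eta > 0$ there exists $h^*(t,\eta) > 0$ such that
\begin{equation*}
\left| f(t) h - \int_t^{t+h} f(s)\diff s \right| \leq \eta h
\quad \text{for all } h \in (0, h^*(t,\eta)].
\end{equation*}
I will take the null set $A$ in the statement to be $[0,T] \setminus L$, or any null superset if an additional null set has to be avoided (so that $L$ is replaced by $L$ minus that set, which still has full measure).

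Given $\varepsilon > 0$, fix $\eta$ small (say $\eta = \varepsilon/(8T)$) and use the absolute continuity of the Lebesgue integral to pick $\kappa > 0$ with $\int_B f < \varepsilon/8$ whenever $|B| < \kappa$. Then
\begin{equation*}
\mathcal{V} = \{[t, t+h] : t \in L,\ 0 < h \leq \min(\varepsilon, h^*(t,\eta))\}
\end{equation*}
is a Vitali cover of $L$, so Vitali's theorem yields a countable disjoint subfamily whose union covers $L$ (and hence $[0,T]$) up to a null set. I then truncate to a finite subfamily $\{I_1,\dots,I_K\}$ whose total length exceeds $T - \kappa$ and form my partition from the endpoints of these intervals together with $0$, $T$, and additional Lebesgue points subdividing any "gap" longer than $\varepsilon$. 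By construction, every partition interval has length at most $\varepsilon$, every tag $t_i$ with $i \geq 1$ lies in $L \setminus A$, and each good interval $I_k$ (tagged at its left endpoint $s_k \in L$) produces a per-interval error of at most $\eta \ell_k$, which sums to at most $\eta T \leq \varepsilon/8$. The skipped first interval $[0, t_1]$ contributes $\int_0^{t_1} f$, which I make less than $\varepsilon/8$ by choosing $t_1 \in L$ small.

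The main obstacle is controlling the Riemann sum over the "gap" subintervals: their total length is at most $\kappa$, but the evaluation of $f$ at their tags could in principle be large. The idea is to use Chebyshev's inequality, $|\{f > M\}| \leq \|f\|_{L^1}/M$, and to pick tags for gap pieces inside the sub-level set $\{f \leq M\}$: choosing $M$ large enough relative to the minimum piece length ensures this set meets every piece, and then choosing $\kappa$ small enough relative to $M$ keeps the gap-tag contribution bounded by $M\kappa < \varepsilon/4$. The integral contribution from the gaps is bounded by $\int_{\text{gaps}} f < \varepsilon/8$ by the choice of $\kappa$. Summing the four error pieces (initial skipped interval, good-interval error, gap-tag contribution, gap-integral contribution) gives strict control by $\varepsilon$, and the construction also yields the mesh bound $t_{i+1}-t_i \leq \varepsilon$ by design.
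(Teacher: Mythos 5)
Your overall skeleton --- Lebesgue differentiation, Vitali covering, absolute continuity of the integral --- is the same as the paper's, and the bookkeeping for the good intervals is sound. The problem is the step where you control the gap tags, and it is a genuine circularity. You (i) fix $\kappa$ by absolute continuity so that $\int_B f < \varepsilon/8$ whenever $|B|<\kappa$, (ii) run Vitali and truncate so the gaps have total length $<\kappa$, (iii) read off the minimum gap length and pick $M$ so that $\{f\le M\}$ meets every gap, and then (iv) demand $M\kappa<\varepsilon/4$. But (iv) asks you to shrink $\kappa$ \emph{after} you have already used it in (ii), and re-running (ii)--(iii) with a smaller $\kappa$ can produce thinner gaps, hence a smaller minimum gap length, hence a larger $M$; Vitali's lemma gives no lower bound on the gap lengths, so there is no reason the constraints on $M$ and $\kappa$ can be satisfied simultaneously. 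A second, related difficulty: in the Riemann sum of the lemma each cell is tagged at its \emph{left} endpoint, so the first gap cell to the right of a good interval $[a_i,b_i]$ is necessarily tagged at $b_i$. You have no control over $f(b_i)$ --- it is the right endpoint of a Vitali interval, not a point you chose to lie in $\{f\le M\}$ or among the Lebesgue points --- so ``picking tags for gap pieces inside $\{f\le M\}$'' cannot be enforced for those forced tags.

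The paper resolves both issues with one move. It fixes $M$ \emph{first} (large enough that $|\{f>M\}|$ is below both the absolute-continuity threshold and the target mesh), and then restricts the Vitali cover to intervals $[x,x+h]$ whose left endpoint $x$ lies in $\{f\le M\}\setminus A$ and whose right endpoint satisfies $f(x+h)\le f(x)+\varepsilon/8$; the Lebesgue-point property of $x$ guarantees such $h$ accumulate at $0$, so this is still a Vitali cover. Each gap tag $b_i$ then automatically satisfies $f(b_i)\le M+\varepsilon/8$, the total gap length is controlled with $M$ already fixed, and the gaps are short enough that no further subdivision is needed. This reordering --- choose $M$ before the cover, and constrain the right endpoints so they inherit the bound --- is the missing idea in your write-up.
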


\begin{proof}
For the sake of clarity we will pick arbitrary $\varepsilon, w >0$ corresponding respectively to the maximum tolerated error in approximating the integral and the maximum mesh size, then one can set $w=\varepsilon$ at the end of the proof. 

By absolute continuity of the Lebesgue integral, there exists some $v > 0$ such that for all measurable subsets $E$ of $[0, T]$ with $\left|E\right| \leq v$ we have 
\begin{equation}
\label{eq1}
    \int_E f(x) \, \diff x\leq \frac{\varepsilon}{8}
\end{equation}
By the dominated convergence theorem, there exists some $M > 1$ such that 
\begin{align*}
    \left|\{x \in [0, T] ; f(x) > M\} \right| < \text{min}\left(\frac{v}{2}, \frac{w}{2}\right).
\end{align*}  Let $H=\left\{f(x) \leq M \right\}\setminus A$.
By the Lebesgue differentiation theorem, for a.e. $x \in H$, there exists $\delta(x)$ such that for all intervals $I$ containing $x$ of length less than $\delta(x)$, we have
\begin{equation}
    \label{eq2}
    \left|\frac{1}{|I|}\int_I f(y) \, dy  - f(x)\right| \leq \frac{\varepsilon}{8(T+1)}
\end{equation}
let $G$ be the (full measure) subset of $H$ such that the above holds.

For every $x \in G$, consider the closed intervals $[x, x + h]$, where $h$ ranges across all positive values such that $f(x + h) - f(x) \leq \frac{\varepsilon}{8}$, and $h \leq \text{min}(\frac{\varepsilon}{8M}, \frac{w}{2}, \delta(x))$. 

The above collection, ranging across $x$ forms a Vitali cover of the set $G$ (because otherwise the Lebesgue differentiation theorem would be contradicted). By Vitali's covering lemma, there exists a countable subcollection $I_i$ such that $G \setminus \cup_i I_i$ has measure $0$, and the $I_i$ are pairwise disjoint. By the Lebesgue dominated convergence theorem, there exists some $N > 0$ such that $\left|G \setminus \cup_{i = 1}^N I_i \right|\leq \text{min}(\frac{\varepsilon}{8M}, \frac{v}{2}, \frac{w}{2}).$ Now we write each $I_i$ as $(a_i, b_i)$, and relabel them so that they are in increasing order. We now take the partition given by the points $t_j$ to be $a_1, b_1, \dots, a_N, b_N$. We denote by $A_i, B_i$ the partition cells $(a_i, b_i)$ and $(b_i, a_{i+1})$, respectively, with the convention that $a_{N+1} =T$; and define $S(A_i)= f(a_i) (b_i - a_i)$ ; $S(B_i)=f(b_i)(a_{i+1} - b_i)$. Finally, we call $\mathcal R(f)$ the left Riemann sum of $f$ over this partition. We have:
\begin{align*}
    \left|\mathcal R(f) - \int_0^T f \, \diff x\right|  \leq \sum_{i = 1}^N \left|\int_{A_i} f \, \diff x - S(A_i)\right| + \sum_{j = 1}^{N} \left|\int_{B_j} f \, \diff x - S(B_j)\right| + \left|\int_{[0,a_1]} f \, \diff x \right|
\end{align*}
Using \cref{eq2}, we have:
\begin{equation*}
    \left|\int_{A_i} f \, \diff x - S(A_i) \right| \leq \left|A_i\right| \left(\frac{\varepsilon}{8(T+1)}\right)
\end{equation*}
Using \cref{eq1} and noticing that $\cup_{j = 1}^{N} B_j \subset [0, T] \setminus \cup_{i = 1}^N I_i$ whose measure is less than $v$ we have
\begin{align*}
    \sum_{j = 1}^{N} \left|\int_{B_j} f \, \diff x - S(B_j)\right| &\leq \left|\int_{\cup_{j = 1}^{N-1} B_j} f \, \diff x \right| + \sum_{j = 1}^{N-1} \left|S(B_j)\right| \\
    & \leq \frac{\varepsilon}{8} + \sum_{j = 1}^{N-1} \left|S(B_j)\right| \\
    & \leq \frac{\varepsilon}{8} +  \sum_{j = 1}^{N} \left|B_j\right| f(b_i) \\
    &\leq  \frac{\varepsilon}{8} + \sum_{j = 1}^{N} \left|B_j\right| (f(a_i) + \frac{\varepsilon}{8}) \\
    & \leq \frac{\varepsilon}{8} + (\sum_{j = 1}^{N} \left|B_j\right|)(M + \frac{\varepsilon}{8}) \\
    &= \frac{\varepsilon}{8} +  \frac{\varepsilon}{8M} (M + \frac{\varepsilon}{8}) \\
    &\leq \frac{\varepsilon}{8} + \frac{\varepsilon}{8} + \frac{\varepsilon^2}{16M} \\
    &= \frac{3\varepsilon}{8}
\end{align*}
Now for the last term, the domain of integration has measure less than $v$ by construction, and therefore
\begin{equation*}
    \left|\int_{[0, a_1]} f \, \diff x\right| \leq \frac{\varepsilon}{8}.
\end{equation*}
Thus putting all the above together, we have 
\begin{align*}
    \left|\mathcal R(f) - \int_0^T f \, \diff x\right| &\leq \frac{\varepsilon}{8(T+1)} \sum_{i = 1}^N \left|A_i\right| +  \frac{3\varepsilon}{8} + \frac{\varepsilon}{8} \\
    &\leq \frac{\varepsilon}{8(T+1)} T + \frac{3\varepsilon}{8} +  \frac{\varepsilon}{8} \\
    &\leq \varepsilon
\end{align*}
Finally, using that $\left| [0, T] \setminus \cup_{i = 1}^N I_i \right| \leq \left| H^c \right| + \left|G \setminus \cup_{i = 1}^N I_i \right| \leq w $ and the fact that the intervals are disjoint, we get that the mesh is of size at most $w$, and the first point of the partition is at most $w$ away from 0.
\end{proof}

{\bf Acknowledgments} The authors  were supported by the European Union via the ERC AdG 101054420 EYAWKAJKOS project.

\bibliographystyle{plain}
\bibliography{biblio}

\begin{thebibliography}{10}

\bibitem{Agueh}
Martial Agueh.
\newblock Existence of solutions to degenerate parabolic equations via the
  {Monge}-{Kantorovich} theory.
\newblock {\em Adv. Differ. Equ.}, 10(3):309--360, 2005.

\bibitem{AltLuc}
Hans~W. Alt and Stephan Luckhaus.
\newblock Quasilinear elliptic-parabolic differential equations.
\newblock {\em Mathematische Zeitschrift}, 183:311--342, 1983.

\bibitem{AGS}
Luigi Ambrosio, Nicola Gigli, and Giuseppe Savar{\'e}.
\newblock {\em Gradient flows in metric spaces and in the spaces of probability
  measures}.
\newblock Lectures in Mathematics. Birkh\"auser, 2008.

\bibitem{ALSStabilty}
Luigi Ambrosio, Stefano Lisini, and Giuseppe Savar{\'e}.
\newblock Stability of flows associated to gradient vector fields and
  convergence of iterated transport maps.
\newblock {\em Manuscripta Math}, 121:1--50, 2006.

\bibitem{BlanchetCarlier}
Adrien Blanchet and Guillaume Carlier.
\newblock Optimal transport and cournot-nash equilibria.
\newblock {\em Mathematics of Operations research}, pages 125--145, 2015.

\bibitem{braides2002gamma}
Andrea Braides.
\newblock {\em Gamma-convergence for {Beginners}}.
\newblock Oxford lecture series in mathematics and its applications. Oxford
  University Press, 2002.

\bibitem{FiveGradsIneq}
Thibault Caillet.
\newblock The five gradients inequality for non quadratic costs.
\newblock {\em Comptes Rendus. Math\'ematique}, 361:715--721, 2023.

\bibitem{CancesMatthesNabetRott}
Cl{\'{e}}ment Canc{\`{e}}s, Daniel Matthes, Flore Nabet, and Eva-Maria Rott.
\newblock Finite elements for {Wasserstein} $w_p$ gradient flows.
\newblock Preprint, 2022.

\bibitem{DeG}
Ennio De~Giorgi.
\newblock New problems on minimizing movements.
\newblock {\em Boundary Value Problems for PDE and Applications, C. Baiocchi
  and J. L. Lions, eds., Masson}, pages 81--98, 1993.

\bibitem{BVEstimates}
Guido De~Philippis, Alp{\'a}r~Rich{\'a}rd M{\'e}sz{\'a}ros, Filippo
  Santambrogio, and Bozhidar Velichkov.
\newblock ${BV}$ estimates in {Optimal} {Transportation} and {Applications}.
\newblock {\em Arch. Ration. Mech. Anal.}, 212(2):829--860, 2020.

\bibitem{DiMMurRad}
Simone Di~Marino, Simone Murro, and Emanuela Radici.
\newblock The five gradients inequality on differentiable manifolds.
\newblock \url{arxiv.org/abs/2307.11451}, 2023.

\bibitem{FokkerPlanckLp}
Simone Di~Marino and Filippo Santambrogio.
\newblock Jko estimates in linear and non-linear {Fokker}-{Planck} equations,
  and {Keller}-{Segel}: ${L}^p$ and {Sobolev} bounds.
\newblock {\em Ann. Inst. H. Poincaré Anal. Non Linéaire}, (39):1485–1517,
  2022.

\bibitem{evans1990weak}
Lawrence~Craig Evans.
\newblock {\em Weak Convergence Methods for Nonlinear Partial Differential
  Equations}.
\newblock CBMS Regional Conference Ser. in Mathematics Series. American
  Mathematical Soc., 1990.

\bibitem{JKO}
Richard Jordan, David Kinderlehrer, and Felix Otto.
\newblock The variational formulation of the fokker--planck equation.
\newblock {\em SIAM Journal on Mathematical Analysis}, 29(1):1--17, 1998.

\bibitem{McCPue}
Robert~J. Mc~Cann and Marjolaine Puel.
\newblock Constructing a relativistic heat flow by transport time steps.
\newblock {\em Annales de l'Institut Henri Poincar\'e C, Analyse non
  lin\'eaire}, 26(6):2539--2580, 2009.

\bibitem{McC}
Robert~J. McCann.
\newblock A convexity principle for interacting gases.
\newblock {\em Advances in Mathematics}, 128:153--179, 1997.

\bibitem{Otto}
Felix Otto.
\newblock Doubly {Degenerate} {Diffusion} {Equations} as {Steepest} {Descent}.
\newblock unpublished.

\bibitem{RMSDoublyNonLinear}
Riccarda Rossi, Alexander Mielke, and Guiseppe Savar\'e'.
\newblock A metric approach to a class of doubly nonlinear evolution equations
  and applications.
\newblock {\em Annali della Scuola Normale Superiore di Pisa - Classe di
  Scienze}, Ser. 5, 7(1):97--169, 2008.

\bibitem{OTAM}
Filippo Santambrogio.
\newblock {\em Optimal {Transport} for {Applied} {Mathematicians}}, volume~87
  of {\em Progress in Nonlinear Differential Equations and their Applications}.
\newblock Birkh\"auser, 2015.

\bibitem{San-surv}
Filippo Santambrogio.
\newblock {$\{$}Euclidean, metric, and Wasserstein{$\}$} gradient flows: an
  overview.
\newblock {\em Bulletin of Mathematical Sciences}, 7(1):87--154, 2017.

\bibitem{VillaniOT}
C{\'e}dric Villani.
\newblock {\em Topics in {Optimal} {Transportation}}, volume~58 of {\em
  Graduate Studies in Mathematics}.
\newblock American Mathematical Society, 2003.

\bibitem{VillaniO&N}
C{\'e}dric Villani.
\newblock {\em Optimal {Transport}, {Old} and {New}}, volume 338 of {\em
  Grundlehren der Mathematischen Wissenschaften}.
\newblock Springer, 2009.

\end{thebibliography}

\end{document}